\documentclass{article}[11pt]
%COMMENT REMOVED
%COMMENT REMOVED

\usepackage{amssymb}
\usepackage{amsmath}
\usepackage{amscd}
\usepackage{amsthm}
%COMMENT REMOVED
\usepackage{graphics}
\usepackage{epsfig}
\usepackage{url}
\usepackage{mdwlist}

\usepackage{algorithm}

\usepackage{pgf}
\usepackage{tikz}
\usetikzlibrary{arrows,patterns,plotmarks,shapes,snakes,er,3d,automata,backgrounds,topaths,trees,petri,mindmap}
%COMMENT REMOVED
%COMMENT REMOVED

\usepackage{verbatim} % this is just for the multi line comments

\newcommand*{\CopyCounter}[2]{%
  \expandafter\def\csname c@#2\endcsname{\csname c@#1\endcsname}%
  \expandafter\def\csname p@#2\endcsname{\csname p@#1\endcsname}%
  \expandafter\def\csname the#2\endcsname{\csname the#1\endcsname}}
\newcounter{Theorem}
\numberwithin{Theorem}{section}
\CopyCounter{Theorem}{Proposition}
\CopyCounter{Theorem}{ProposedProblem}
\CopyCounter{Theorem}{Property}
\CopyCounter{Theorem}{Claim}
\CopyCounter{Theorem}{Lemma}
\CopyCounter{Theorem}{Corollary}
\CopyCounter{Theorem}{Conjecture}
\CopyCounter{Theorem}{Definition}
\CopyCounter{Theorem}{Example}
\CopyCounter{Theorem}{Remark}
\CopyCounter{Theorem}{Question}
\CopyCounter{Theorem}{Condition}
\CopyCounter{Theorem}{Criterion}
\CopyCounter{Theorem}{Observation}
\theoremstyle{plain}%% needs amsthm.sty
\newtheorem{thm}[Theorem]{Theorem}

\newtheorem{lemma}[Lemma]{Lemma}

\theoremstyle{definition}%% needs amsthm.sty

\newtheorem{remark}[Remark]{Remark}

\newif\ifnever\neverfalse

\newcommand{\bm}[1]{\mbox{\boldmath${#1}$}}
\newcommand{\Assume}[1]{{\bf (A#1)}}
\newcommand{\Bssume}[1]{{\bf (B#1)}}
\newcommand{\AssumePrime}[1]{{\bf (A#1')}}

\newcommand{\domain}{\Omega}
\newcommand{\cdomain}{\widebar{\Omega}}
\newcommand{\boundary}{\partial \domain}

\newcommand{\stay}{\M}
\newcommand{\fboundary}{\partial \stay}
\newcommand{\move}{\domain \backslash \stay}

\newcommand{\blds}[1]{\mbox{\scriptsize \boldmath $#1$}}

\newcommand{\imply}{\Longrightarrow}

\newcommand{\xBar}{\bm{\bar{x}}}
\newcommand{\xbar}{\bar{x}}
\newcommand{\ybar}{\bm{\bar{y}}}

\newcommand{\xBtilde}{\bm{\tilde{x}}}
\newcommand{\xtilde}{\tilde{x}}

\newcommand{\yhat}{\bm{\hat{y}}}

\newcommand{\x}{\bm{x}}

\newcommand{\TT}{\mathcal{T}}

\newcommand{\ba}{\bm{a}}

\newcommand{\bb}{\bm{b}}

\newcommand{\bv}{\bm{v}}

\newcommand{\y}{\bm{y}}

\newcommand{\widebar}{\overline}
\newcommand{\R}{\bm{R}}

\newcommand{\J}{{\cal{J}}}

\newcommand{\fB}{\bm{f}}

\newcommand{\M}{{\cal{M}}}
\newcommand{\B}{{\cal{B}}}
\newcommand{\C}{{\cal{C}}}
\newcommand{\D}{{\cal{D}}}
\newcommand{\V}{{\cal{V}}}
\newcommand{\tauP}[1]{\frac{\partial \tau}{\partial \xi_{#1}}}

\DeclareMathOperator*{\argmin}{argmin}

%AV
%\usepackage{psfig}
\newcommand{\marginfix}{
\setlength{\parskip}{0.01cm}
\setlength{\textwidth}{6.0in}
%latex measures from 1 in down and over for some reason
\setlength{\oddsidemargin}{-0.0 in}
\setlength{\evensidemargin}{0.0 in}
\setlength{\topmargin}{-0.5in}
\setlength{\textheight}{9.0 in}
}

\marginfix

  \renewenvironment{thebibliography}[1]{%
    \begin{oldthebibliography}{#1}%
      \setlength{\parskip}{.3ex}%
      \setlength{\itemsep}{.3ex}%
  }%
  {%
    \end{oldthebibliography}%
  }

\addtolength{\partopsep}{-1mm}
\addtolength{\itemsep}{-4mm}
\addtolength{\abovedisplayskip}{-2mm}
\addtolength{\belowdisplayskip}{-2mm}
\addtolength{\belowcaptionskip}{-1mm}
\addtolength{\abovecaptionskip}{-1mm}
\addtolength{\textfloatsep}{-1mm}

\begin{document}

%COMMENT REMOVED
%COMMENT REMOVED
%COMMENT REMOVED
%COMMENT REMOVED

\vspace*{1cm}
{\Large{\bf
\centerline{Deterministic control of randomly-terminated
processes.}
}}

\vspace*{.1in}
{\Large
\centerline{J. Andrews and A. Vladimirsky\footnote{
\mbox{This research was supported %COMMENT REMOVED
by
the National Science Foundation grants DMS-0514487 and DMS-1016150.}
%COMMENT REMOVED
%COMMENT REMOVED
\mbox{The first author's research was also supported
by the NSF Graduate Fellowship.}}}

\vspace*{.1in}
\centerline{Department of Mathematics and Center for Applied Mathematics}
\centerline{Cornell University, Ithaca, NY 14853}
}

\vspace*{.1in}
\begin{abstract}
\noindent
We consider both discrete and continuous
``uncertain horizon'' deterministic control processes,
for which the termination time is %COMMENT REMOVED
a random variable.
We examine the dynamic programming equations for
the value function of such processes,
explore their connections to infinite-horizon
and optimal-stopping problems,
and derive %COMMENT REMOVED
sufficient conditions
for the applicability of non-iterative (label-setting) methods.
%COMMENT REMOVED
%COMMENT REMOVED
In the continuous case, the resulting PDE has a free boundary,
on which all characteristic curves originate.
The causal properties of ``uncertain horizon'' problems can be
%COMMENT REMOVED
exploited to design efficient numerical algorithms:
we derive causal semi-Lagrangian and Eulerian discretizations
for the isotropic randomly-terminated problems, and use them
to build a modified version of the Fast Marching Method.
We illustrate our approach using numerical examples
from optimal idle-time processing and
expected response-time minimization.
\end{abstract}

\section{Introduction.}
\label{s:intro}
%COMMENT REMOVED
%COMMENT REMOVED

Deterministic and stochastic optimal control problems arise naturally
in most engineering disciplines and in fields as diverse as economics,
geometric optics, robotic navigation, and computational geometry.
Dynamic programming \cite{Bellman_PNAS} is the key technique for
solving such problems by formulating equations satisfied by the corresponding
{\em value function}. Fast algorithms for recovering the value function are
thus of interest to many practitioners, but the efficiency challenges
in building such methods can be quite different depending on the exact type
of optimal control problems.
One basic taxonomy is based on the time-horizon of optimization in the problem.
Is the process stopped at some explicitly specified terminal time
({\em finite-horizon problems}) or
continues forever ({\em infinite-horizon problems}) ?
If the terminal time is not specified,
but the process eventually stops, does this happen
upon entering some pre-specified set
({\em exit-time problems}),
or at any point when the controller chooses to do so
({\em optimal-stopping problems}) ?
For finite-horizon problems, the time-dependence of the value function
results in availability of simple and non-iterative (time-marching) numerical
methods.  In contrast, all other scenarios typically yield large systems
of coupled non-linear equations, and finding suitable non-iterative methods
for them can be challenging.
%COMMENT REMOVED
The above problem types are classical and the resulting equations are well
understood  (for readers' convenience we review them in
\S \ref{ss:graph_problem_types} and \S \ref{ss:contin_problem_types}
for discrete and continuous problems respectively).
In this paper we are interested in a less studied class
of {\em randomly-terminated} or {\em ``uncertain-horizon''} problems,
where the termination is assumed to be the result of a Poisson random process.
As we show here, such problems inherit some properties of both finite-horizon
and optimal-stopping cases.  Efficient numerical methods for them
are not as simple as time-marching, but we show that generalizations
of non-iterative algorithms previously derived for exit-time problems
are applicable.

Another natural classification approach is
to draw a distinction between deterministic and stochastic
control processes.
Not surprisingly, the former are usually simpler
and can be treated by more efficient numerical methods.
This observation holds for dynamic programming equations
both in discrete and continuous settings.

In the discrete cases, the deterministic Shortest Path (SP) problems
on graphs are often solved using fast-iterative ({\em label-correcting})
and non-iterative ({\em label-setting}) methods,
including the well-known Dijkstra's method \cite{Diks}
and its parallelizable Dial's variant \cite{Dial}.
We refer the readers to \cite{Ahuja, Bertsekas_NObook, Bertsekas_DPbook}
for a detailed discussion of these efficient methods on graphs.
The more general Stochastic Shortest Path (SSP) problems
typically require iterative methods; a good discussion of SSPs can be
found in \cite{Bertsekas_DPbook}.
Except for a few structurally simple examples,
the exact (a priori verifiable) conditions under
which non-iterative algorithms
are applicable to SSPs are still not known.
The second author has previously derived such sufficient conditions
for a narrower class of {\em Multimode} SSPs \cite{VladMSSP},
but the problems considered in the current paper lie outside of this class.

In the case of continuous state spaces, the deterministic problems
lead to first-order non-linear Hamilton-Jacobi-Bellman partial differential
equations; a comprehensive description can be found in \cite{BardiDolcetta}.
For exit-time problems,
%COMMENT REMOVED
the resulting PDE is typically static
since the starting time does not affect the optimality of any given control.
Efficient numerical methods for such static PDEs formed an active area of
research in the last fifteen years.
For example, Dijkstra-like non-iterative numerical methods for
isotropic problems were independently introduced by Tsitsiklis
\cite{Tsitsiklis_conference, Tsitsiklis} and
Sethian \cite{SethFastMarcLeveSet, SethSIAM}.  Later generalizations
lead to Ordered Upwind Methods \cite{SethVlad2, SethVlad3, AltonMitchell2}
applicable to anisotropic problems.  All these methods rely on
a careful use of Lagrangian information to efficiently solve
the Eulerian discretized equations,
%COMMENT REMOVED
%COMMENT REMOVED
yielding {\em space-marching} algorithms, in which the numerical solution
is marched on the grid inward from the boundary of the domain.

On the other hand, most commonly considered stochastic control problems
on continuous state space assume that the controlled dynamics is affected
by some time-continuous stochastic process
(usually, by a scaled Brownian motion).
This yields second-order semi-linear PDEs and, in the static case,
non-iterative numerical methods are not applicable.
%COMMENT REMOVED
%COMMENT REMOVED
%COMMENT REMOVED
%COMMENT REMOVED
%COMMENT REMOVED
%COMMENT REMOVED

The stochasticity considered in this paper is of a different kind --
the process dynamics and the running cost are assumed to be fully deterministic
up to the time of termination, but that terminal time is itself
a random variable.
Such uncertain-horizon problems have applications
in production/maintenance planning \cite{BoukasHaurieMichael},
economic growth \& global climate change modeling \cite{HaurieMoresino},
and multi-generational games \cite{Haurie}.
We first show that the optimal control of randomly-terminated problem
can be  always re-stated as a {\em time-discounted infinite-horizon}
problem on
the same state space.  (In the continuous case, this reformulation is
well-known; e.g., \cite{BoukasHaurieMichael}).
We then prove that efficient non-iterative methods are applicable
for a wide subclass of such problems.  Our exact technical assumptions
are specified in sections \ref{ss:graph_uncertain_horizon} and
\ref{ss:contin_value_func}, but this class generally
includes all problems where maintaining the ``status quo'' is always
possible and incurs a smaller immediate cost than any
attempt to change the system state.
That assumption allows re-stating this as a deterministic
{\em optimal stopping} problem.  We define a {\em motionless set} $\M$
of all states, starting from which the optimal behavior is to remain in place
(awaiting the termination).  Unfortunately, the set $\M$ is not
a priori known, which presents a challenge in using label-setting methods,
since $\B = \partial \M$ forms a {\em free boundary} of this problem,
from which the numerical solution should be marched into the rest
of the domain.

Our exposition proceeds on two parallel tracks:
the discrete and continuous settings are handled in sections
\ref{s:general_graph} and \ref{s:contin_space} respectively.
Some of the technical results are included in the Appendixes.
%COMMENT REMOVED
%COMMENT REMOVED
%COMMENT REMOVED
Throughout the paper the letters $v$ and $V$
are reserved for the value functions of
randomly-terminated problems,
while letters $u$ and $U$ are used for all other optimal
control examples.
We will also use the expressions ``deterministic randomly-terminated processes''
and ``uncertain-horizon processes'' interchangeably.
In \S \ref{ss:V_properties} and \S \ref{ss:contin_value_func}
we prove the properties of value functions and motionless sets,
including their asymptotic behavior depending on the probability
distribution for the terminal time.
We review the label-setting for the fully deterministic problems
on graphs in \S \ref{sss:dijkstra_SP} and then prove the applicability
of a Dijkstra-like method to uncertain-horizon problems in
\S \ref{sss:dijkstra_uncertain}.  The corresponding generalization
of the Fast Marching Method is developed in \S \ref{ss:FMM_modified}.
Numerical experiments illustrating the properties of the latter
are included in \S \ref{s:examples}; several possible %COMMENT REMOVED
generalizations are discussed in \S \ref{s:conclusions}.

%COMMENT REMOVED
%COMMENT REMOVED

\section{Uncertain-horizon problems on a finite state space.}
\label{s:general_graph}

Optimal control problems on a finite state space
are usually presented as problems of finding
optimal paths in a finite directed graph.
The notion of ``optimality'' here depends not only on
edge costs, but also on termination conditions for the process.

We begin by briefly reviewing several
%COMMENT REMOVED
common types of such problems to put
uncertain-horizon optimization in context.

\subsection{Common optimal control problems on graphs.}
\label{ss:graph_problem_types}

We will assume that all paths are considered on a directed graph
with $M$ nodes $X = \{x_1, \ldots, x_M\}$.
Let $N(x_i) \subset X$ denote the set of nodes
to which a direct transition from $x_i$ is possible,
and assume that $\kappa << M$ is an upper bound on the outdegrees of all nodes
(i.e., $|N(x_i)| \leq \kappa$ for $\forall x_i \in X$).

Suppose $y_k \in X$ is the position after $k$ steps.
A sequence $\y = (y_0, \ldots, y_k, \ldots)$ is a {\em path} on this graph
if $y_{k+1} \in N(y_k)$ for each $k \geq 0$.
Each transition
from $x_i$ to $x_j$ incurs a cost $K_{ij} = K(x_i,x_j)$
(assumed to be $+\infty$ if $x_j \not \in N(x_i)$).
If the termination occurs at $x_j$, this results in an additional terminal-penalty $q_j = q(x_j).$

\begin{itemize}
\item
{\em Finite-horizon problems:}\\
%COMMENT REMOVED
The process runs for exactly $t$ steps (where $t$ is a constant
specified as a part of problem description).
The total cost of a path $(y_0, \ldots, y_t)$ is
\begin{equation}
\label{eq:finite_path_cost}
Cost(y_0, \ldots, y_t) = \sum\limits_{k=0}^{t-1} K(y_k, y_{k+1}) \; + \; q(y_t).
\end{equation}
The central idea of {\em dynamic programming} is to define
the {\em value function} $U(x_i,k) = U_i^k$ as the minimum cost to spend starting
from $x_i$ with $(t-k)$ steps remaining.
Bellman's optimality principle \cite{Bellman_PNAS}
%COMMENT REMOVED
allows to derive equations for $U_i^k$ using the values
in $\{ U_j^{k+1} \mid x_j \in N(x_i) \}$ only.
In particular,
\begin{equation}
\label{eq:DP_finite_horizon}
\begin{cases}
U_i^k = \min\limits_{x_j \in N(x_i)}
\left\{ K_{ij} + U_j^{k+1} \right\},
&\text{ for } k=0,\ldots,(t-1) \text{ and } i=1,\ldots,M;\\
U_i^t = q_i \text{ for } i=1,\ldots,M.
\end{cases}
\end{equation}
It is easy to generalize this to use time-dependent $K(x_i,x_j,k)$ and
the system (\ref{eq:DP_finite_horizon}) can be efficiently solved
in a single backward-sweep (from $k=t$ to $k=0$), regardless of
the sign of transition cost $K$.

\item
{\em Exit-time a.k.a. shortest path (SP) problems:}\\
Here the termination occurs immediately upon reaching
the exit-set $Q \subset X$; i.e., $t_{\y} = min \{k | y_k \in Q\}$.
As a result, $U_i = U(x_i)$ (the min-cost-to-exit-starting-from-$x_i$)
no longer depends on the number of steps already used to reach $x_i$.
The value function is defined only in the absence of ``negative cost cycles''
and, by optimality principle, must satisfy
\begin{equation}
\label{eq:DP_classic_SP}
\begin{cases}
U_i = \min\limits_{x_j \in N(x_i)} \left\{K_{ij} + U_j\right\},
&\text{ for } \forall x_i \in X \backslash Q;\\
U_i = q_i,
&\text{ for } \forall x_i \in Q.
\end{cases}
\end{equation}
This is a system of $M$ coupled non-linear equations,
and, unless the graph is acyclic, this system might be
expensive to solve iteratively.  If all $K_{ij}$'s are non-negative,
label-setting methods (further discussed in section \ref{sss:dijkstra_SP})
provide an efficient way of solving it.

\item
{\em Infinite-horizon time-discounted problems:}\\
Here the process never terminates -- the paths are infinite,
but the cost of each subsequent step is discounted, i.e.,
\begin{equation}
\label{eq:infinite_path_cost}
Cost(y_0, \ldots) = \sum\limits_{k=0}^{+\infty} \alpha^k \, K(y_k, y_{k+1}),
\end{equation}
where $\alpha \in (0,1)$ is the discounting factor.
The value function $U_i = U(x_i)$ is well-defined for all nodes
regardless of sign of $K_{ij}$'s and satisfies
\begin{equation}
\label{eq:DP_inf_horizon}
U_i = \min\limits_{x_j \in N(x_i)} \left\{ K_{ij} + \alpha U_j \right\}
\qquad
\text{ for } \forall x_i \in X.
\end{equation}
Unlike the SP, in this case optimal paths might include cycles.

\item
{\em Optimal stopping problems:}\\
Same as the infinite-horizon problem, but with an option
of deciding to terminate the process at any node $x$
by paying an exit time-penalty $q(x)$.
The value function then satisfies
\begin{equation}
\label{eq:DP_opti_stop}
U_i = \min\left( q_i, \;
\min\limits_{x_j \in N(x_i)} \left\{ K_{ij} + \alpha U_j \right\}
\right)
\qquad
\text{ for } \forall x_i \in X.
\end{equation}
We note that, unlike the previous case, the value function is
well-defined even with $\alpha = 1$ (i.e., without time-discounting).
\end{itemize}

%COMMENT REMOVED
%COMMENT REMOVED

\subsection{The value function for uncertain-horizon processes on graphs.}
\label{ss:graph_uncertain_horizon}

In this paper we concentrate on {\em uncertain-horizon problems},
where the process does not terminate deterministically after
a pre-specified number of steps nor upon reaching
some special exit set $Q$, but instead might terminate
with probability $p \in (0,1)$ after each transition\footnote{
More general problems with ($p$ dependent on the current state or
even on the last transition) can be handled very similarly; in this section
we use the same constant $p$ at all nodes for the sake of notational simplicity.}.
Two simple examples of such randomly-terminated problems are
provided in Figure \ref{fig:discrete_examples}.

%COMMENT REMOVED
%COMMENT REMOVED
%COMMENT REMOVED
%COMMENT REMOVED
%COMMENT REMOVED

We will use $Y$ to denote the set of all infinite paths
on $X$ and $Y(x) = \{ \y \in Y \, \mid \, y_0 = x \}$ for
the set of all paths starting from $x$.
If the termination occurs after $t$ steps,
the total cost of the path is the same as in (\ref{eq:finite_path_cost}).
The a priori probability of termination after exactly
$t \geq 1$ steps is clearly $\hat{P}_t = (1-p)^{t-1} p$.
The cost of an infinite path $\y = (y_0, \ldots)$ is now a random
variable whose expected value is
\begin{equation}
\label{eq:path_cost}
J(\y) = E[Cost(\y)] =
\sum_{t=1}^{\infty} \hat{P}_t \, Cost(y_0, \dots, y_t).
\end{equation}
Since the graph is finite, functions $K$ and $q$ are bounded,
the above series is absolutely convergent, and
$|J(\y)|$ is uniformly bounded for all $\y \in Y$.\\
%COMMENT REMOVED
%COMMENT REMOVED

Starting the process from any node $x \in X$, the goal is to minimize the expected total cost
up to the termination.  The value function $V(x)$ can be defined as
\begin{equation}
\label{eq:value_through_paths}
V(x) = \min\limits_{\y \in Y(x)} J(\y),
\end{equation}
where the existence of a minimizer follows from the compactness of $Y(x)$
and continuity of $J$; see Lemma \ref{lm:min_attained} in the Appendix A.

We will say that $\y = (y_0, y_1, \ldots) \in Y$ is a {\em simple path} if
$y_k = y_m$ implies $y_{k+1} = y_{m+1}$.
A {\em simple loop} is a periodic simple path.
Since the set $X$ is finite,
any simple path leads to a simple loop within the first $M$ steps.
We will use $Y^s$ to denote the set of all simple paths and $Y^s(x) \subset Y^s$
for the set of all such paths starting from $x$.
Theorem \ref{thm:stationary} in the Appendix A proves the existence of a
{\em minimizing simple path} for every $x \in X$.

We note that on any path at least one transition happens before the termination
with probability one.  As a result we can rewrite
\begin{equation}
\label{eq:path_cost_recursive}
J(\y)
= K(y_0, y_1) + p q(y_1) + \sum_{t=2}^{\infty} \hat{P}_t \, Cost(y_1, \dots, y_t)
= K(y_0, y_1) + p q(y_1) + (1-p) J(y_1, \dots).
\end{equation}
This yields the Optimality Principle:
\begin{equation}
\label{eq:DP_uncertain}
V_i = V(x_i) = \min\limits_{x_j \in N(x_i)}
\left\{ K_{ij} + p q_j + (1-p) V_j \right\},
\hspace*{2cm} i=1, \ldots, M.
\end{equation}

%COMMENT REMOVED
\begin{remark}
\label{rem:relation_to_infinite_horizon}
This problem can also be restated as an infinite-horizon deterministic
control by setting the costs of transition
$\tilde{K}_{ij} = K_{ij} + p q_j$ and the discounting factor $\alpha = (1-p)$.
Indeed, the expected cost of any uncertain-horizon path can be re-written as
\begin{eqnarray}
\nonumber
E[Cost(\y)] &=& \sum_{k=0}^{\infty} K(y_k, y_{k+1}) \left( \sum_{t=k+1}^{\infty} \hat{P}_t \right)
\; + \; \sum_{t=1}^{\infty} q(y_t) \hat{P}_t
\; = \; \sum_{k=0}^{\infty} K(y_k, y_{k+1}) (1-p)^k \; + \; \sum_{t=1}^{\infty} q(y_t) \hat{P}_t\\
&=& \sum_{k=0}^{\infty} \left( K(y_k, y_{k+1}) + p q(y_{k+1}) \right) (1-p)^k
\; = \; \sum_{k=0}^{\infty} \alpha^k \tilde{K}(y_k, y_{k+1}).
\label{eq:infinite_horizon}
\end{eqnarray}

Conversely, consider an infinite-horizon deterministic
control problem on a graph specified by a matrix of $\tilde{K}_{ij}$'s
and by a discounting factor $\alpha \in (0,1)$.
Assume that self-transitions are allowed at every node, i.e.,
$$
\Assume{1} \hspace*{2cm}
x_i \in N(x_i) \qquad \text{ for } \forall x_i \in X.
$$
(Whenever we need to refer to all {\em other} transitions possible
from $x_i$, we will use the set $\hat{N}(x_i) = N(x_i) \backslash \{x_i\}$.)
Subject to assumption \Assume{1}, each infinite-horizon problem can also be restated as
a randomly-terminated problem by setting:
$$
p = (1 - \alpha); \quad q_{i} = \tilde{K}_{ii}/p;
\quad K_{ij} = \tilde{K}_{ij} - p q_j; \quad \quad K_{ii} = 0.
$$
\end{remark}

\begin{remark}
\label{rem:free_self_transition}
Similarly, if the assumption \Assume{1} holds in an uncertain-horizon problem,
we can always assume that self-transitions incur zero cost by setting
$$
q^{new}_i = q_i + K_{ii} / p;
\qquad
K_{ij}^{new} = K_{ij} + p \, (q_j - q^{new}_j) = K_{ij} - K_{jj};
\qquad
K_{ii}^{new} = 0.
$$
Thus, for the rest of this paper we will assume
without loss of generality that
$$
\Assume{2} \hspace*{2cm}
K_{ii}=0 \text{ for all } i=1,\ldots,M.
$$
\end{remark}

We will also make an additional assumption,
the computational consequences of which will become clear in section
\ref{sss:dijkstra_uncertain}:
$$
\Assume{3} \hspace*{2cm}
K_{ij} \geq \Delta \geq 0 \qquad \text{ for all } i=1,\ldots,M
\text{ and for all } j \neq i.
$$

More generally, we can similarly treat any randomly-terminated problem,
where a transition from another node costs at least as much
as staying in place.  (Indeed, if \Assume{1} holds and
$K_{ij} \geq K_{jj}$ for all $i$ and $j$,
then the procedure described in Remark \ref{rem:free_self_transition}
will ensure that $K^{new}$ will satisfy \Assume{2} and \Assume{3}.)

In view of \eqref{eq:path_cost}, if a constant is added to all $q_i$'s,
then the same constant is added to all $V_i$'s.  As a result,
we can also assume without loss of generality that
$q_i > 0$ for $\forall x_i \in X.$

\begin{figure}[hhhh]
\center{
$
\hspace*{-10mm}
\begin{array}{cc}
\begin{tikzpicture}[->,>=stealth',shorten >=1pt,auto,node distance=3cm,
                    semithick]
  \tikzstyle{every state}=[draw, shape=circle]

  \node[state]         (A)								{$x_1$};
  \node					   [below of=A, yshift=22mm]	{$q_1=0$};
  \node[state]         (B) [right of=A]					{$x_2$};
  \node					   [below of=B, yshift=22mm]	{$q_2=0$};

  \path (A) edge	[bend left=45]		node		{$K_{12}=1$}  (B)
            edge	[loop left]			node [swap] {$K_{11}=10$} (A)
        (B) edge	[bend left=45]		node		{$K_{21}=1$}  (A)
            edge	[loop right]		node [swap]	{$K_{22}=10$} (B);
\end{tikzpicture}&
\hspace*{10mm}
\begin{tikzpicture}[->,>=stealth',shorten >=1pt,auto,node distance=2cm,
                    semithick]
  \tikzstyle{every state}=[draw, shape=circle]

  \node[state]         (A)								{$x_1$};
  \node					   [below of=A, yshift=12mm]	{$q_1=1$};
  \node[state]         (B) [right of=A]					{$x_2$};
  \node					   [below of=B, yshift=12mm]	{$q_2=10$};
  \node[state]         (C) [right of=B]					{$x_3$};
  \node					   [below of=C, yshift=12mm]	{$q_3=0$};
  \node					   [above of=B, yshift=-7mm]	{$K_{ij} = 0 \, $ for all $ \, i,j$};

  \path (A) edge (B)
			edge [loop above] (A)
        (B) edge (C)
			edge [loop above] (B)
		(C)	edge [loop above] (B);
\end{tikzpicture}\\
(A) & \hspace*{10mm} (B)
\end{array}
$
}
\caption{
{\footnotesize
{\bf Left: }
A simple example violating assumption \Assume{2}.
The optimal path clearly alternates between $x_1$ and $x_2$.
By the symmetry, $V_1 = V_2 = 1/p$.  We note that the value iterations
process described in \S \ref{ss:label_setting} would generally require
infinitely many iterations here (except for a lucky initial guess $W^0 = V$).
{\bf Right: } A simple example satisfying assumptions \Assume{1}-\Assume{3}.
We note that $V_2 = V_3 = q_3 = 0$ and, starting from $x_2$
the path $(x_2, x_3, x_3, \ldots)$ is always optimal regardless of $p$.
However, starting from $x_1$, the path $(x_1, x_2, x_3, x_3, \ldots)$
is optimal only for $p \leq 0.1$, and $V_1 = \min (1, \, 10p)$.
}
}
\label{fig:discrete_examples}
\end{figure}
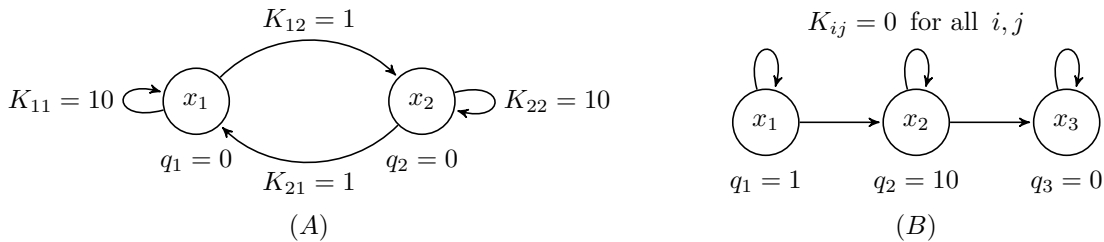

\subsection{Properties of the value function.}
\label{ss:V_properties}
Figure \ref{fig:discrete_examples}A shows a simple example, where every optimal path is a loop.
As we'll see later, such loops make it impossible to solve
the system \eqref{eq:DP_uncertain} efficiently.
Fortunately, conditions \Assume{1} - \Assume{3} preclude this scenario
and also yield useful bounds on $V$.

We will refer to a path $\y = (y_0, y_1, \ldots)$ as {\em eventually motionless}
(or {\em e.m.}) if there exists
some $m \geq 0$ such that $y_k = y_m$ for all $k \geq m$.
We note that for any such e.m. path the formula \eqref{eq:path_cost}
can be rewritten as
\begin{equation}
\label{eq:path_cost_terminal}
J(\y) \; =  \;
\sum_{t=1}^{m-1} (1-p)^{t-1} p \, Cost(y_0, \dots, y_t)
\; + \; (1-p)^{m-1} Cost(y_0, \dots, y_m).
\end{equation}
We will refer to a node $x \in X$ as {\em motionless}
if $\y = (x, x, x, \ldots)$ is a valid path and $V(x) = J(\y) = q(x)$.
We will also use $\M \subset X$ to denote a set of all such motionless nodes.
To illustrate, in the example of Figure \ref{fig:discrete_examples}A
$\M = \emptyset$ regardless of $p$.
In the example of Figure \ref{fig:discrete_examples}B,
$\x_3 \in \M, \, \x_2 \not \in \M$, but $\x_1 \in \M$ only if $p \geq 0.1$.

%COMMENT REMOVED

%COMMENT REMOVED

\begin{thm}
Suppose an uncertain-horizon problem is such that
\Assume{1}, \Assume{2}, and \Assume{3} hold.  Then
\begin{enumerate}
\item
$V_i \leq q_i$ for all $x_i \in X$.

\item
If $\y \in Y^s(x_i)$ is optimal and $y_k = x_j$
then $V_i \geq k \Delta + V_j$.

\item
If a loop $(y_0, \ldots, y_{m})$ is an optimal path, then
$V(y_k) = q(y_k) = q(y_0)$ for all $k=0,\ldots,m.$

\item
For every $x \in X$ there exists an e.m. optimal path $\y \in Y^s(x)$.

\item
If an e.m. optimal path from $x$
leads through $\xtilde$, then there exists
an e.m. optimal path from $\xtilde$ avoiding $x$.
\end{enumerate}
\label{thm:causality}
\end{thm}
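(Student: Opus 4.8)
The plan is to prove the five claims roughly in the stated order, since each relies on the previous ones. For claim (1), I would simply observe that under \Assume{1} and \Assume{2} the path $\y=(x_i,x_i,\ldots)$ is admissible with $J(\y)=q_i$ (using \eqref{eq:path_cost_terminal} with $m=0$, or more directly: every transition has cost $0$ and every terminal penalty is $q_i$), so $V_i\le J(\y)=q_i$. For claim (2): let $\y\in Y^s(x_i)$ be optimal with $y_k=x_j$. The tail $(y_k,y_{k+1},\ldots)$ is itself admissible from $x_j$, so $V_j\le J(y_k,y_{k+1},\ldots)$; combining this with the recursive identity \eqref{eq:path_cost_recursive} applied $k$ times gives $V_i=J(\y)=\sum_{\ell=0}^{k-1}(1-p)^\ell\big(K(y_\ell,y_{\ell+1})+pq(y_{\ell+1})\big)+(1-p)^k J(y_k,\ldots)$. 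Using \Assume{3} ($K_{y_\ell y_{\ell+1}}\ge\Delta$ when the transition is non-trivial) and $q>0$, the first sum is at least $\sum_{\ell=0}^{k-1}(1-p)^\ell\Delta$; I'd want to be a little careful whether the intended bound $k\Delta+V_j$ is literally correct given the $(1-p)^\ell$ weights — most likely the statement implicitly assumes the simple path is \emph{non-repeating up to step $k$} (so all $k$ transitions are genuine moves) and one uses $\sum_{\ell=0}^{k-1}(1-p)^\ell\ge$ something, or perhaps the cleaner route is to note $V_i - V_j \ge \sum_{\ell=0}^{k-1}(1-p)^\ell K(y_\ell,y_{\ell+1}) + (\text{penalty terms}) \ge k\Delta$ by bounding each geometric term from below after absorbing the penalty contributions. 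This bookkeeping is the one genuinely fiddly spot.

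For claim (3), suppose $(y_0,\ldots,y_m)$ with $y_m=y_0$ is an optimal loop, i.e. the periodic extension $\y$ achieves $V(y_0)$. I would apply claim (2) to the indices $0<k<m$: going once around the loop, optimality of $\y$ forces $V(y_0)\ge m\Delta + V(y_0)$ if the loop has any genuine move, which is impossible unless $\Delta=0$; more robustly, one shows each inequality $V(y_k)\ge (\text{positive})+V(y_{k+1})$ around the cycle must collapse to equality with all the $K$-terms and $q$-drops vanishing, forcing all $K$ on the loop to be $0$ and all $q(y_k)$ equal; then \eqref{eq:path_cost_terminal}-style evaluation gives $J=q(y_0)$, and combined with claim (1) and the definition we get $V(y_k)=q(y_k)=q(y_0)$. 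The key input here is that \Assume{3} plus $q>0$ make any cyclic "profit" impossible.

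Claim (4) is the heart. Take any optimal simple path $\y\in Y^s(x)$; by finiteness it enters a simple loop $L=(y_j,\ldots,y_{j+m}=y_j)$ within $M$ steps. Since the tail loop is traversed optimally from $y_j$ (optimality of $\y$ plus \eqref{eq:path_cost_recursive}), claim (3) applies: $V(y_j)=q(y_j)$, so $y_j$ is a motionless node and $\y'=(y_0,\ldots,y_j,y_j,y_j,\ldots)$ — i.e., replace the loop by sitting still at $y_j$ — has the same cost by the loop-cost computation (all loop $K$'s are $0$ and all loop $q$'s equal $q(y_j)$, so \eqref{eq:path_cost_terminal} gives the same value). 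Hence $\y'$ is an e.m. optimal path from $x$, and it is still simple. Finally, for claim (5): given an e.m. optimal $\y\in Y^s(x)$ passing through $\xtilde=y_k$, the tail $(y_k,y_{k+1},\ldots)$ is e.m., optimal from $\xtilde$, and lies in $Y^s(\xtilde)$; the only worry is that this tail might revisit $x$. If it does, say $x=y_\ell$ for some $\ell>k$, then by the same reasoning the shorter tail from $y_\ell$ is e.m. optimal from $x$, and since $\y$ is simple this shifting stabilizes — iterating, one reaches a tail from $\xtilde$ (or a cyclic shift thereof) that is e.m., optimal, and never returns to $x$ before becoming motionless; one must check the motionless node itself isn't $x$, but if it were, then $x\in\M$ and $(x,x,\ldots)$ is trivially an e.m. optimal path from $\xtilde$... no — rather, one argues that because $\y$ already "left" $x$ and stays motionless elsewhere, the avoiding tail does too. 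The main obstacle overall is making the cycle-collapsing argument in (3) airtight with the geometric weights, and then carefully handling the "avoiding $x$" bookkeeping in (5); everything else is direct substitution into \eqref{eq:path_cost_recursive} and \eqref{eq:path_cost_terminal}.
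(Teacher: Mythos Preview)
Your overall strategy matches the paper's, but there is a genuine gap in your handling of part (2), and the paper's arguments for (3) and (5) differ from yours in instructive ways.

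\textbf{Part (2).} You correctly identify the difficulty with the geometric weights $(1-p)^\ell$, but your proposed resolution (``bounding each geometric term from below after absorbing the penalty contributions'') does not work as stated: from $V_i = \sum_{\ell=0}^{k-1}(1-p)^\ell\bigl(K(y_\ell,y_{\ell+1})+pq(y_{\ell+1})\bigr)+(1-p)^k V_j$ you cannot extract $k\Delta + V_j$ on the right without additional information. The missing idea is to use part (1) \emph{inside} the one-step recursion rather than at the end. Since the path is optimal, at each step $V(y_\ell) = K(y_\ell,y_{\ell+1}) + p\,q(y_{\ell+1}) + (1-p)\,V(y_{\ell+1})$; now part (1) gives $q(y_{\ell+1}) \geq V(y_{\ell+1})$, so $p\,q(y_{\ell+1}) + (1-p)\,V(y_{\ell+1}) \geq V(y_{\ell+1})$, and hence $V(y_\ell) \geq K(y_\ell,y_{\ell+1}) + V(y_{\ell+1}) \geq \Delta + V(y_{\ell+1})$. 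Summing these $k$ inequalities telescopes to $V_i \geq k\Delta + V_j$ with no geometric factors in sight. This is exactly the paper's argument, and it is not ``fiddly bookkeeping'' but a substantive use of (1).

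\textbf{Part (3).} Your cycle argument via part (2) shows at best that all transition costs on the loop vanish; it does not by itself force the $q$-values to coincide. The paper instead picks the node $\tilde{x}$ on the loop with minimal $q$, observes directly that staying at $\tilde{x}$ is at least as good as traversing the loop (by \Assume{3}), so $\tilde{x}\in\M$ and $V(\tilde{x})=q(\tilde{x})$; equality of the two path costs then forces every $K$ on the loop to be zero and every $q$ to equal $q(\tilde{x})$.

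\textbf{Part (5).} Your tail-iteration argument is more convoluted than necessary, and you implicitly assume the given e.m.\ optimal path is simple, which the hypothesis does not state. The paper argues by cases: if $\Delta>0$, part (2) gives $V(x)>V(\tilde{x})$, so no optimal path from $\tilde{x}$ can pass through $x$; if $\Delta=0$ and some optimal path from $\tilde{x}$ does return to $x$, concatenating path segments yields an optimal loop through both, and part (3) then shows $\tilde{x}$ is motionless --- so $(\tilde{x},\tilde{x},\ldots)$ is the desired path.
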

\begin{proof}

\begin{enumerate}
\item
Assumption \Assume{2} and (\ref{eq:DP_uncertain}) yield
$$
V_i \leq p q_i + (1-p) V_i \quad \Longrightarrow \quad
V_i \, \leq \, q_i = J(x_i,x_i, x_i, \ldots).
$$

\item
First, suppose $y_1 = x_j$; i.e.,
$x_j \in N(x_i)$ is a minimizer in (\ref{eq:DP_uncertain}).
Then, using the above,
$
V_i = K_{ij} + p q_j + (1-p) V_j \geq K_{ij} + V_j \geq \Delta + V_j$.
Applying this argument recursively yields $V_i \geq k \Delta + V_j$.

\item
An optimal path is optimal for every node on it.
Suppose $q(y_n) \leq q(y_k)$ for $k=0,\ldots,m$.  Then $\xtilde = y_n \in \M$
: given \Assume{3}, the path $\tilde{\y} = (\xtilde, \xtilde, \xtilde, \ldots)$
is at least as good for $\xtilde$ as the original loop $\y$.
Also $V(\xtilde) = J(\tilde{\y}) = q(\xtilde)$.
The equality $J(\y) = J(\tilde{\y})$
is only possible if $K(y_k, y_{k+1}) = 0$ and $q(y_k) = q(\xtilde)$
for $k=0,\ldots,m$.  As a result, every $y_k \in \M$
and $V(y_0) = \ldots = V(y_m)$.
(This also shows that a non-trivial loop cannot be optimal if $\Delta > 0$.)

\item
Theorem \ref{thm:stationary} shows the existence of an optimal simple path $\y$
(eventually leading to a simple loop).  The above also shows that
the entry point of that simple loop $y_k$ is a motionless node.
Thus, the loop can be replaced by $(y_k, y_k, \ldots)$ without changing
the cost of $\y$.

\item
For $\Delta>0$ this is obvious (if an optimal path from $x$ passes
from $\xtilde$, then $V(x) > V(\xtilde))$. If $\Delta=0$,
then combining segments of both optimal paths we can form an optimal loop
passing through both $x$ and $\xtilde$.  As shown above, this implies
that $\xtilde$ is a motionless node.
\end{enumerate}
\end{proof}

%COMMENT REMOVED

In the rest of this subsection we will use superscripts
to indicate the dependence on $p$ of the expected path-cost $J^p(\y)$,
the value function $V^p(x)$ and
the motionless set $\M^p = \{x \mid V^p(x) = q(x) \}$.
Wherever this superscript is omitted,
the properties hold for any fixed $p \in (0,1)$.
To address two extreme cases, we introduce $V^0$ and $V^1$
as respective solutions of two systems:
\begin{eqnarray}
\label{eq:DP_V0}
V^0_i = V^0(x_i) &=& \min \left\{ \min\limits_{x_j \in \hat{N}}
\left\{ K_{ij} +  V^0_j \right\}, \, q_i \right\},
\hspace*{2cm} i=1, \ldots, M;\\
\label{eq:DP_V1}
V^1_i = V^1(x_i) &=& \min\limits_{x_j \in N(x_i)}
\left\{ K_{ij} +  q_j \right\},
\hspace*{2cm} i=1, \ldots, M.
\end{eqnarray}

We see that \eqref{eq:DP_V0} is a variant of \eqref{eq:DP_opti_stop}
with $\alpha = 1$.  In other words, $V^0$ is the value function for
a deterministic  optimal stopping problem with no time-discounting.
As a result, the label-setting methods of section \ref{sss:dijkstra_SP}
are applicable, and for every starting node $x$ there exists
some optimal finite path $(y_0, \ldots, y_m)$;
i.e., $y_0 = x$ and $V^0(x) = Cost(y_0, \ldots, y_m)$.
In the framework of randomly-terminated problems,
$V^0$ can be intuitively interpreted as the limiting case,
in which the termination is so unlikely that we are guaranteed to reach
any node of our choice before it occurs.  (This interpretation is
justified in Theorem \ref{thm:asymptotics_V} below).

$V^1$ corresponds to the opposite case, where the termination
always happens after the very first transition.
%COMMENT REMOVED
%COMMENT REMOVED
We note that since $x_i \in N(x_i)$,
we have $V^1_i \leq q_i$ for all $x_i \in X.$
By the above definition,
if $x_i \in \M^1$,  we have $q_i \leq K_{ij} +  q_j$
for all $x_j \in \hat{N}(x_i)$.  An important subset
is obtained when the self-transition is {\em the only}
optimal strategy:
$$
\M^1_0 = \left\{ x_i \in X \, \mid \,
q_i < K_{ij} +  q_j, \, \forall x_j \in \hat{N}(x_i)
\right\}.
$$
Below we show that $\M^p \subset \M^1, \forall p \in (0,1)$.
However the example in Figure \ref{fig:subtle_motionless}
shows that some nodes in $\M^1\backslash \M^1_0$
might become motionless only in the limit (as $p \to 1$).
On the other hand, every node in $\M^1_0$ is already
motionless for some $p < 1$.
%COMMENT REMOVED
%COMMENT REMOVED
%COMMENT REMOVED
%COMMENT REMOVED

\begin{figure}[hhhh]
\center{
\begin{tikzpicture}[->,>=stealth',shorten >=1pt,auto,node distance=3cm,
                    semithick]
  \tikzstyle{every state}=[draw, shape=circle]

  \node[state]         (A)								{$x_1$};
  \node					   [below of=A, yshift=22mm]	{$q_1=10$};
  \node[state]         (B) [right of=A]					{$x_2$};
  \node					   [below of=B, yshift=22mm]	{$q_2=9$};
  \node[state]         (C) [right of=B]					{$x_3$};
  \node					   [below of=C, yshift=22mm]	{$q_3=0$};

  \path (A) edge			  node {$K_{12}=1$} (B)
			edge [loop above] node {$K_{11}=0$} (A)
        (B) edge			  node {$K_{23}=C$} (C)
			edge [loop above] node {$K_{22}=0$}  (B)
		(C)	edge [loop above] node {$K_{33}=0$}  (B);
\end{tikzpicture}
}
\caption{
{\footnotesize
A simple example with subtle motionless set properties.
First, note that $x_1 \in \M^1 \backslash \M^1_0$ for all $C>0$.
Second, assuming that $C=1$, we have $V^p_3 = 0, \, V^p_2 = 1, \
V^p_1 = 2 + 8p,$ and $x_1 \not \in \M^p$ for any $p < 1.$
On the other hand, if $C >9$ then $x_1 \in \M^p$
for all $p \in[0,1]$.
}
}
\label{fig:subtle_motionless}
\end{figure}
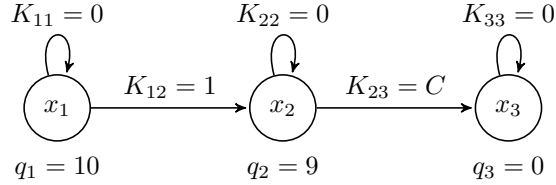

To simplify the notation, we will refer to a path $\y \in Y(x)$ as
{\em $p$-optimal} if $J^p(\y) = V^p(x)$.
\begin{thm}
Suppose an uncertain-horizon problem is such that
\Assume{1}, \Assume{2}, and \Assume{3} hold.  Then
\begin{enumerate}

\item
Suppose $\y = (y_0, \ldots, y_m, y_m, \ldots)$ is a $p$-optimal e.m. path
leading from $x=y_0$ to $y_m$.  Define the cost of each finite
subpath as in \eqref{eq:finite_path_cost}.
Then $Cost(y_0, \ldots y_m) \leq Cost(y_0, \ldots y_n)$
and $q(y_m) \leq q(y_n)$ for all $n<m$ (and the second inequality becomes strict
if $\Delta >0$).

\item
$V^p(x) \in [V^0(x), \, V^1(x)]$ for all $p \in (0,1)$ and all $x \in X$.

\item
$0 < p_1 \leq p_2 < 1 \qquad \imply \qquad V^{p_1}(x) \leq V^{p_2}(x),
\quad \forall x \in X.$

\item
$V^p(x_i) \to V^1(x_i)$ for all $x_i \in X$ as $p \to 1$.

\item
$V^p(x_i) \to V^0(x_i)$ for all $x_i \in X$ as $p \to 0$.

\item
If $p_1 \leq p_2$, then $\M^{p_1} \subset \M^{p_2}.$
In particular, $\M^0 \; \subset\;  \M^{p} \; \subset \; \M^1$
for $\forall p \in (0,1).$

\item
For every $x_i \in \M^1_0$, there exists a sufficiently large
$p < 1$, such that $x_i \in \M^{p}$.

\item
For every $x_i \not \in \M^0$, there exists a sufficiently small
$p >0$, such that $x_i \not \in \M^p$.

\item
Define the sets of local and global minima of $q$:
$$
Q_l = \{ x_i \mid q(x_i) \leq q(x_j),  \, \forall x_j \in N(x_i) \};
\qquad
Q_g = \{ x_i \mid q(x_i) \leq q(x_j),  \, \forall x_j \in X \}.
$$
Then $Q_g \subset \M^0, \quad Q_l \subset \M^1,
\quad \text{and} \quad  \forall K_{ij} = 0 \, \imply \, Q_l = \M^1.$
\end{enumerate}
\label{thm:asymptotics_V}
\end{thm}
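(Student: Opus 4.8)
I would prove the nine parts essentially in the given order, since they build on one another: a structural fact about eventually‑motionless (e.m.) optimal paths (Part~1) yields the sandwich $V^0\le V^p\le V^1$ (Part~2), which gives the limits $p\to 0,1$ by squeezing (Parts~4--5); monotonicity in $p$ (Part~3) comes from a fixed‑point comparison; and Parts~6--9 are then read off. For Part~1, fix a $p$‑optimal e.m. path $\y=(y_0,\dots,y_m,y_m,\dots)$ from $x$ (it exists by Theorem~\ref{thm:causality}(4), and may be taken simple, so consecutive nodes before step $m$ are distinct and each such transition costs at least $\Delta$ by \Assume{3}). Write $c_t=Cost(y_0,\dots,y_t)$, with $c_t=c_m$ for $t\ge m$ by \Assume{2}. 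For each $n<m$ the truncated path $\y_n=(y_0,\dots,y_n,y_n,\dots)$ is admissible by \Assume{1}, so $J^p(\y)\le J^p(\y_n)$; expanding both sides with \eqref{eq:path_cost_terminal} and using the telescoping identity $\sum_{t=n+1}^{m-1}(1-p)^{t-1}p+(1-p)^{m-1}=(1-p)^n$, this rearranges to $\sum_{t=n+1}^{m-1}(1-p)^{t-1}p\,(c_t-c_n)+(1-p)^{m-1}(c_m-c_n)\le 0$. A downward induction on $n$ from $m-1$ then gives $c_m\le c_n$: in the inductive step $c_t-c_n\ge c_m-c_n$ for $n<t<m$, so the left side is $\ge(1-p)^n(c_m-c_n)$, forcing $c_m\le c_n$. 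Finally $c_m\le c_n$ unwinds to $q(y_n)-q(y_m)\ge\sum_{k=n}^{m-1}K(y_k,y_{k+1})\ge(m-n)\Delta\ge 0$, which is the second claim (strict when $\Delta>0$).

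\textbf{Parts 2, 4, 5.} For $V^p(x)\le V^1(x)$, the ``move once then freeze'' path $(x,x_j,x_j,\dots)$ has $p$‑cost exactly $K_{ij}+q_j$ for every $p$ (apply \eqref{eq:path_cost_terminal} with $m=1$), so minimizing over $x_j\in N(x)$ gives the bound. For $V^0(x)\le V^p(x)$, take the $p$‑optimal e.m. path of Part~1: since every $c_t\ge c_m$ and the termination probabilities sum to $1$, $V^p(x)=J^p(\y)\ge c_m=Cost(y_0,\dots,y_m)\ge V^0(x)$, the last step because $(y_0,\dots,y_m)$ is an admissible stopping strategy for \eqref{eq:DP_V0}. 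Part~4 follows by squeezing: $V^p\ge 0$ and \eqref{eq:DP_uncertain} give $V^p_i\ge\min_{j\in N(x_i)}\{K_{ij}+pq_j\}$, which tends to $V^1_i$ as $p\to1$ (min of finitely many affine functions of $p$, evaluated via \eqref{eq:DP_V1}), while Part~2 gives $V^p_i\le V^1_i$. For Part~5, take a $V^0$‑optimal finite path $(y_0,\dots,y_m)$ and extend it to an e.m. path for the $p$‑problem; its $p$‑cost is $\sum_{t=1}^{m-1}(1-p)^{t-1}p\,c_t+(1-p)^{m-1}c_m\to c_m=V^0(x_i)$ as $p\to0$, so $\limsup_{p\to0}V^p_i\le V^0_i\le V^p_i$.

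\textbf{Part 3 (the main obstacle).} The naive approach — fix a $p_2$‑optimal path and show its cost is monotone in the termination rate — fails, because the finite‑horizon cost $c_t$ along an optimal path need not decrease in $t$ (cf.\ Figure~\ref{fig:discrete_examples}). Instead I work with the Bellman operator $(\TT_pW)_i=\min_{j\in N(x_i)}\{K_{ij}+pq_j+(1-p)W_j\}$, a sup‑norm contraction with factor $1-p$ whose unique fixed point is $V^p$. Plugging $W=V^{p_2}$ into $\TT_{p_1}$ and comparing termwise with $\TT_{p_2}V^{p_2}=V^{p_2}$, the difference of the $j$‑th candidates is $(p_1-p_2)(q_j-V^{p_2}_j)\le 0$, since $p_1\le p_2$ and $V^{p_2}_j\le q_j$ by Theorem~\ref{thm:causality}(1). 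Hence $\TT_{p_1}V^{p_2}\le V^{p_2}$, and iterating the monotone contraction $\TT_{p_1}$ downward from $V^{p_2}$ converges to $V^{p_1}$, giving $V^{p_1}\le V^{p_2}$. This is the one place the ``status quo is cheapest'' hypothesis is essential: without $V^p\le q$ the sign of the comparison reverses.

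\textbf{Parts 6--9.} Part~6 is immediate: if $x\in\M^{p_1}$ then $q(x)=V^{p_1}(x)\le V^{p_2}(x)\le q(x)$ by Part~3 and Theorem~\ref{thm:causality}(1), so $x\in\M^{p_2}$; the bracketing $\M^0\subset\M^p\subset\M^1$ uses Part~2 in place of Part~3 together with $V^0,V^p\le V^1\le q$. For Part~7, if $x_i\in\M^1_0$ then $K_{ij}+q_j>q_i$ for every $j\in\hat N(x_i)$, and since $0\le V^p_j\le\max_k q_k$ we have $(1-p)V^p_j\to0$ uniformly, so for $p$ near $1$, $K_{ij}+pq_j+(1-p)V^p_j>q_i\ge V^p_i$ for all such $j$; then \eqref{eq:DP_uncertain} forces the minimizer to be the self‑transition (admissible by \Assume{1}), whence $V^p_i=q_i$. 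Part~8 is a direct consequence of Part~5 ($V^0(x_i)<q(x_i)$ implies $V^p(x_i)<q(x_i)$ for small $p$). For Part~9: $Q_g\subset\M^0$ because any stopping strategy from a global minimizer of $q$ costs at least $q(x_i)$ (all $K_{ij}\ge0$) while freezing achieves $q(x_i)$; $Q_l\subset\M^1$ by the same argument applied to $V^1(x_i)=\min_{j\in N(x_i)}\{K_{ij}+q_j\}$; and when all $K_{ij}=0$ this equals $\min_{j\in N(x_i)}q_j$, so $V^1(x_i)=q_i$ exactly when $x_i$ is a local minimizer, i.e.\ $\M^1=Q_l$. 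The genuinely delicate steps are the downward induction in Part~1 and its use, via $V^p\le q$, in the monotonicity of Part~3; everything else is bookkeeping once the sandwich, the monotonicity, and the two limits are in hand.
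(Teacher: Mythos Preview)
Your proposal is correct, and for most parts it parallels the paper's argument closely (Parts~2, 5, 6, 8, 9 are essentially identical; Parts~4 and~7 differ only in minor packaging). Two parts use genuinely different routes.

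For Part~1 the paper gives a one-line argument: let $l=\argmin_{n\le m}c_n$; if $c_l<c_m$ then the truncated path $(y_0,\dots,y_l,y_l,\dots)$ has strictly smaller expected cost than $\y$ (since $J^p(\y)-J^p(\tilde\y)=\sum_{t>l}\hat P_t(c_t-c_l)\ge0$ with a strictly positive term), contradicting optimality. Your downward induction achieves the same conclusion but is more work than needed.

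The substantive difference is Part~3. The paper fixes $p_2$ and inducts on $m(x)$, the minimum number of transitions along a $p_2$-optimal path before reaching $\M^{p_2}$, unwinding the Bellman equation one step at a time; this is a path-based argument that uses the existence of e.m.\ optimal paths. Your approach is operator-theoretic: you show $\TT_{p_1}V^{p_2}\le V^{p_2}$ componentwise (using $V^{p_2}\le q$) and then iterate the monotone contraction $\TT_{p_1}$ to its fixed point $V^{p_1}$. Your argument is cleaner and more robust---it does not rely on the combinatorial structure of optimal paths, only on the contraction property of value iteration already established in \S\ref{ss:label_setting}---and it makes transparent exactly where the hypothesis $V^p\le q$ enters, as you note. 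The paper's induction, on the other hand, keeps the argument self-contained within the path-cost framework and avoids invoking the convergence of value iterations. Both are standard dynamic-programming techniques; yours generalizes more readily (e.g., to state-dependent $p$).
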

\begin{proof}
\begin{enumerate}

\item
Let $l = \argmin_{n\leq m} Cost(y_0, \ldots, y_n)$ and
define $\tilde{\y} = (y_0, \ldots, y_l, y_l, \ldots)$.
If $Cost(y_0, \ldots, y_l) < Cost(y_0, \ldots, y_m)$, then
the formula \eqref{eq:path_cost} implies that $J^p(\tilde{\y}) < J^p(\y)$,
which contradicts the $p$-optimality of $\y$.
(We emphasize that the above argument {\em does not} imply that
$Cost(y_0, \ldots, y_n)$ is monotone non-increasing in $n$ along
every $p$-optimal path; see Figure \ref{fig:discrete_examples}B
for a counter-example.)
We note that
$$
Cost(y_0, \ldots, y_n) \; = \;
\sum\limits_{k=0}^{n-1} K(y_k, y_{k+1}) \; + \; q(y_n)
\; \geq \; Cost(y_0, \ldots, y_m) \geq
\sum\limits_{k=0}^{n-1} K(y_k, y_{k+1}) \; + \; q(y_m),
$$
where the second inequality uses \Assume{3} and becomes strict if
$\Delta > 0$.  Thus, $q(y_m) \leq q(y_n)$.
%COMMENT REMOVED

\item
Let $\y = (y_0, \ldots, y_m, y_m, \ldots)$ be a $p$-optimal e.m.
path leading from $x$ to $y_m$.  Note that \eqref{eq:path_cost}
defines $J^p(\y)$ as a weighted average of $Cost(y_0, \ldots, y_n)$ terms.
Then, from the previous part,
$J^p(\y) \geq Cost(y_0, \ldots, y_m) \geq V^0(x)$.
On the other hand, $V^1$ can also be defined as
$V^1(x) = \min J^p(\y)$, minimizing over all ``infinite''
paths $\y \in Y^s(x)$ of the form $\y=(x, y_1, y_1, \ldots)$;
hence, $V^p(x) \leq V^1(x)$.

\item
%COMMENT REMOVED
%COMMENT REMOVED
%COMMENT REMOVED
%COMMENT REMOVED
%COMMENT REMOVED
%COMMENT REMOVED
%COMMENT REMOVED
%COMMENT REMOVED
%COMMENT REMOVED
%COMMENT REMOVED
%COMMENT REMOVED
%COMMENT REMOVED
Fixing $p_2$, we define $m(x)$ to be the minimum number of transitions
before reaching the motionless node along the $p_2$-optimal paths from $x$.
(E.g., $x \in \M^{p_2} \; \Longleftrightarrow \; m(x) = 0$.)
We prove this statement by induction on $m(x)$.
First, for $m(x) = 0$, we have $V^{p_2}(x) = q(x) \geq V^{p_1}(x)$.
Now suppose the statement is true for all $x_j$ such that $m(x_j) \leq k$
and consider $x_i$ such that $m(x_i) = k+1.$
Then there exists $x_j^* \in N(x_i)$, a minimizer in
formula \eqref{eq:DP_uncertain} such that $m(x_j^*) = k$.  As a result,
\begin{eqnarray*}
V^{p_2}(x_i) &=& K_{i j^*} + p_2 q(x_j^*) + (1-p_2) V^{p_2}(x_j^*)
\; \geq \;
K_{i j^*} + p_2 q(x_j^*) + (1-p_2) V^{p_1}(x_j^*)\\
&=&
K_{i j^*} + V^{p_1}(x_j^*) + p_2 \left( q(x_j^*) - V^{p_1}(x_j^*) \right)
\; \geq \;
K_{i j^*} + V^{p_1}(x_j^*) + p_1 \left( q(x_j^*) - V^{p_1}(x_j^*) \right)\\
&\geq&
V^{p_1}(x_i).
\end{eqnarray*}

\item
For any $\y \in Y(x)$, formulas
\eqref{eq:finite_path_cost} and \eqref{eq:path_cost} can be combined to show that,
$$
J^p(\y) = p \left[
K(y_0,y_1) + q(y_1) \right]
\, + \, \sum_{t=2}^{\infty} (1-p)^{t-1} p \, Cost(y_0, \dots, y_t)
\; \to  \; \left[K(y_0,y_1) + q(y_1) \right] \text{ as } p \to 1.
$$

\item
For any e.m. $\y=(y_0, \ldots, y_m, y_m, \ldots) \in Y(x_i)$,
the formula \eqref{eq:path_cost_terminal} implies that
$J^p(\y) \, \to  \, Cost(y_0, \dots, y_m)$  as $p \to 0.$
If the finite path $(y_0, \dots, y_m)$ is optimal for $V^0(x_i)$,
then  $Cost(y_0, \dots, y_m) = V^0(x_i) \leq V^p(x_i) \leq J^p(\y)$,
implying $V^p(x_i) \to V^0(x_i).$

\item
If $x \in \M^{p_1}$, then $q(x) = V^{p_1}(x) \leq V^{p_2}(x) \leq q(x)$.
Thus, $x \in \M^{p_2}.$

\item
Since $x_i \in \M^1_0$, we can define
$$
p \; = \; q_i \; / \,
\min\limits_{x_j \in \hat{N}(x_i)} \{ K_{ij} + q_j \}
\; < \, 1.
$$
As a result, for any simple path $\y \in Y^s(x_i)$
involving at least one non-trivial transition (i.e., with $y_1 \neq y_0=x_i$),
we have
$J^p(\y) \geq p \left[
K(y_0,y_1) + q(y_1)
\right] \geq q_i. \; $  Thus, $x_i \in \M^p.$

\item
Part 5 of this Theorem implies $\lim\limits_{p \to 0} V^p(x_i) = V^0(x_i) < q_i$.
Thus, for sufficiently small $p$, $V^p(x_i) < q_i$, and $x_i \not \in \M^p.$

\item
If $x_i \in Q_g$, then for any path $(y_0, \ldots, y_m)$
starting from $y_0=x_i$, we have
$Cost(y_0, \ldots, y_m) \geq q(y_m) \geq q_i \; \imply \; x_i \in \M^0. \,$
If $x_i \in Q_l$, then $q_i \leq q_j \leq K_{ij} + q_j$ for all
$x_j \in N(x_i) \; \imply \; x_i \in \M^1.\,$

If $\forall K_{ij} = 0$,
the definitions of $\M^1$ and $Q_l$ coincide.  In this case
every $p$-optimal trajectory becomes motionless at some local minimum of $q$.
\end{enumerate}
\end{proof}

\begin{remark}
For the general infinite-horizon and/or optimal stopping
problems reviewed in \S \ref{ss:graph_problem_types}, the cost of any trajectory
is obviously a non-decreasing function of the discounting factor
$\alpha$.  We note that part 3 of the above theorem does not contradict
this monotonicity, since the procedure described
in Remark \ref{rem:relation_to_infinite_horizon}
also makes $\tilde{K}_{ij}$'s dependent on $p$.
\end{remark}

\subsection{Solving the Dynamic Programming equations.}
\label{ss:label_setting}
The Optimality Principle (\ref{eq:DP_uncertain}) yields a system of $M$
coupled non-linear equations, which can be challenging to solve efficiently.

An operator $T$ can be defined on $\R^M$ component-wise by applying
the right hand side of equation (\ref{eq:DP_uncertain}); i.e., for
any $W \in \R^M$,
\begin{equation}
(TW)_i \; = \;
\min\limits_{x_j \in N(x_i)}
\left\{ K_{ij} + p q_j + (1-p) W_j \right\}.
\label{eq:generic_value_it_component}
\end{equation}
Clearly,
$V =
\left[
\begin{array}{c}
V_1\\
\vdots\\
V_M
\end{array}
\right]
$
is a fixed point of $T$ and one hopes to recover $V$
by {\em value iteration}:
\begin{equation}
W^{n+1} \, := \, T \, W^n \qquad
\text{ starting from an initial guess $W^0 \in \R^M$.}
\label{eq:generic_value_it}
\end{equation}

The value iterations will converge (i.e., $W^n \rightarrow V$) for any initial guess $W^0$
\cite{BertsTsi}, but this approach is not very efficient.
It is generally possible that the value iterations will not converge in a finite number
of steps (e.g., see the example in Figure \ref{fig:discrete_examples}A).
If the problem is such that the convergence is attained in finitely many steps
for every $W^0$, it is easy to show that $W^M = V$
(i.e., at most $M$ value iterations will be needed).
Since the computational cost of each value iteration is $O(M)$,
this results in the overall cost of $O(M^2)$.
A standard Gauss-Seidel relaxation can be used to speed up the convergence, but then
the efficiency becomes strongly dependent on the chosen node-ordering and the worst case
computational cost remains $O(M^2)$.

Label-setting methods provide an attractive alternative to value iterations for problems
with intrinsic {\em causality}.
These methods reorder the iterations over
the nodes to guarantee that each $V_i$ is recomputed at most
$\kappa << M$ times.
This can also be interpreted as finding the optimal Gauss-Seidel ordering
{\it dynamically} -- based on the value function already correctly computed on a part of the graph.

\subsubsection{Label-setting for SP problems.}
\label{sss:dijkstra_SP}

For reader's convenience, we first provide a brief overview
of standard label-setting methods for deterministic shortest path problems (\ref{eq:DP_classic_SP}).
If we assume $K_{ij} \geq \Delta \geq 0,$
then $U_i = K_{ij} + U_j$ implies $U_i \geq \Delta + U_j \geq U_j$.
This induces a causal ordering: each $U_i$ depends only on the smaller values
in the adjacent nodes.  I.e., if $x_j \in N(x_i)$ and $U_j > U_i$
then replacing $U_j$ with $+\infty$ will not change the value of $U_i$ as
computed by formula (\ref{eq:DP_classic_SP}).
This is the basis of Dijkstra's classic method \cite{Diks}.

The method subdivides $X$ into three classes: $Accepted$
(or ``permanently labeled'') nodes,
$Considered$ (or ``tentatively labeled'') nodes that
have $Accepted$ nodes among their neighbors, and
$Far$ (or ``unlabeled'') nodes. The values for $Considered$ $x_i$'s are
successively re-evaluated using only the previously $Accepted$ adjacent values:
\begin{equation}
\label{eq:temp_labels_Determ}
U(x_i) \; := \; \min\limits_{x_j \in \tilde{N}(x_i)} \left\{K_{ij} + U_j\right\},
%COMMENT REMOVED
\end{equation}
where $\tilde{N}(x_i) = N(x_i) \cap Accepted$.
The algorithm is initialized by designating all exit-nodes as $Considered$
(with tentative labels $U(x_i) = q_i$ for all $x_i \in Q$) and all other nodes
as $Far$ (with tentative labels $U(x_i) =+\infty$ for all $x_i \in X \backslash Q$).
At each stage the algorithm chooses the smallest of tentative labels $U(\xbar)$,
designates $\xbar$ as $Accepted$ (making this label permanent and removing $\xbar$
from the list of $Considered$), and
re-evaluates $U_i$ for each not-yet-$Accepted$ $x_i$ such that $\xbar \in N(x_i).$
Since $\xbar$ is the only new element in $\tilde{N}(x_i)$, that re-evaluation
can be more efficiently performed by setting
\begin{equation}
U(x_i) \; := \; \min \left\{ U(x_i), \left( K(x_i, \xbar) + U(\xbar) \right) \right\}.
\label{eq:temp_labels_local_Determ}
\end{equation}
In addition, all previously $Far$ $x_i$'s updated at this step are designated $Considered$.\\
The algorithm terminates once the list of $Considered$ nodes is empty, at which
point the vector $U \in \R^M$ satisfies the system of equations
(\ref{eq:DP_classic_SP}).
The necessity to sort all tentative labels suggests
%COMMENT REMOVED
the use of heap-sort data structures for $Considered$ nodes \cite{Ahuja},
resulting in the overall computational complexity of $O(M \log M)$.

If $\Delta > 0$, then Dial's label-setting method is also
applicable \cite{Dial}. The idea is to avoid sorting $Considered$ nodes and instead
place them into ``buckets'' of
width $\Delta$ based on their tentative values.  If $U(\xbar)$ is the ``smallest'' of
tentative labels and $U(\x)$ is currently in the same bucket, then
even after $\xbar$ is permanently labeled, it cannot affect $U(\x)$ since
$$U(x)  < U(\xbar) + \Delta \leq U(\xbar) + K(x, \xbar).$$
Thus, a typical stage of Dial's method consists of
$Accepting$ everything in the current bucket, recomputing all
not-yet-$Accepted$ nodes adjacent to
those newly $Accepted$, switching them to other buckets if warranted by the
new tentative labels, and then moving on to the next bucket.
Since inserting to and deleting from a bucket
can be performed in $O(1)$ time, the overall computational complexity of Dial's method
becomes $O(M)$.  In addition, while Dijkstra's approach is inherently sequential,
Dial's method is naturally parallelizable.
However, in practice, Dial's method is often less efficient then Dijkstra's because
the constant factor hidden in the former's $O(M)$ asymptotic complexity depends on
$\Delta$ and on average $K_{ij}$ values encountered along optimal paths.
Many other enhancements of the above
label-setting methods are available in the literature (e.g., see
\cite{Bertsekas_NObook}, \cite{Ahuja} and references therein).

\subsubsection{Label-setting for uncertain-horizon problems.}
\label{sss:dijkstra_uncertain}

We now show that a variation of Dijkstra's method is
applicable in the uncertain-horizon case provided
\Assume{1}-\Assume{3} hold.

Part 2 of Theorem \ref{thm:causality} shows the causality
necessary for a variant of Dijkstra's method to correctly
compute the value function.  Some modifications are needed, however.
First of all, the tentative labels should be initially set to $V_i = q_i$.
Secondly, the update formula (\ref{eq:temp_labels_local_Determ}) has to be replaced by
\begin{equation}
\label{eq:temp_labels_local_uncertain}
V(x_i) \; := \; \min \left\{ V(x_i), \left( K(x_i, \xbar) + p q(\xbar) + (1-p) V(\xbar) \right) \right\}.
\end{equation}
Thirdly, there is a question of which nodes should be initially placed
on the $Considered$ list.  (Since this list is maintained using a heap-sort
data structure, this decision will directly impact the efficiency of the method.)
The only way for Dijkstra's to produce a wrong result is to have some node
$\xbar$ $Accepted$ before its optimal neighbor $\hat{x} \in N(\xbar)$.
Suppose $\y$ is $\xbar$'s optimal path becoming motionless at some node $\xtilde$;
i.e.,
$\y = (y_0, \ldots, y_{m-1}, y_m, y_m, y_m, \ldots) \in Y^s(\xbar)$,
where $y_0 = \xbar$, $y_1 = \hat{x}$, and $y_m = \xtilde$.
By Part 2 of Theorem \ref{thm:causality}, $V(\xbar) \geq V(y_1) \geq \ldots V(y_m) = q(y_m)$.
As long as, $y_m$ is initially marked as $Considered$, it is easy to show that
$\xbar$ will not be $Accepted$ prematurely.
So, it is sufficient to initially mark as $Considered$ all
motionless nodes.  Unfortunately, the set $\M$ is not known in advance,
but Part 6 of Theorem \ref{thm:asymptotics_V} can be used to
over-estimate it with $\M^1 = \left\{ x_i \mid \,
q_i \leq K_{ij} +  q_j, \, \forall x_j \in \hat{N}(x_i)\right\}.$
However, the following Lemma shows why it is already sufficient
to mark as $Considered$ a smaller set $Q_l$:

\begin{lemma}
For every $\xbar \in X$ there exists a path
$\y = (y_0, \ldots, y_r)$ such that\\
$y_r \in Q_l$, $y_0 = \xbar$,  and, for all $k=0,\ldots, (r-1)$,\\
\hspace*{1cm} either $y_k$ is a motionless node and $V(y_k)=q(y_k) > V(y_{k+1})$;\\
\hspace*{1cm} or $y_{k+1}$ is an optimal transition from $y_k$\\
\hspace*{13mm} (i.e., $V(y_k) = K(y_k,y_{k+1}) + p q(y_{k+1}) + (1-p) V(y_{k+1})$).
\label{lm:causal_chain}
\end{lemma}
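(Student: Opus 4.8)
The plan is to build the path $\y = (y_0, y_1, \ldots, y_r)$ greedily, starting from $y_0 = \xbar$ and extending it one node at a time, stopping as soon as we reach a node in $Q_l$. At a generic stage, suppose we have already constructed $(y_0, \ldots, y_k)$ with $y_k \notin Q_l$. There are two cases. If $y_k$ is a motionless node, then by definition $V(y_k) = q(y_k)$; since $y_k \notin Q_l$, there exists some neighbor $x_j \in N(y_k)$ with $q(x_j) < q(y_k)$, and by Part 1 of Theorem \ref{thm:causality} we have $V(x_j) \le q(x_j) < q(y_k) = V(y_k)$, so we set $y_{k+1} = x_j$ and record this as a ``motionless'' step with $V(y_k) = q(y_k) > V(y_{k+1})$. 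If $y_k$ is not motionless, then $V(y_k) < q(y_k)$, so the self-transition is not a minimizer in \eqref{eq:DP_uncertain}; we pick $y_{k+1} \in \hat{N}(y_k)$ to be an actual minimizer there, which makes $y_{k+1}$ an optimal transition from $y_k$ in the sense required by the statement.

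The second thing to verify is that this process terminates, i.e., reaches $Q_l$ in finitely many steps. The key observation is that along the constructed path the quantity $V(y_k)$ is strictly decreasing as long as $\Delta > 0$: a motionless step strictly decreases $V$ by the argument above, and an optimal-transition step gives $V(y_k) = K(y_k, y_{k+1}) + p q(y_{k+1}) + (1-p)V(y_{k+1}) \ge \Delta + V(y_{k+1})$ by Part 2 of Theorem \ref{thm:causality} (using $y_{k+1} \in \hat{N}(y_k)$ and $V(y_{k+1}) \le q(y_{k+1})$). With finitely many nodes and strictly decreasing $V$ values, no node can repeat, so the path must terminate, and it can only terminate by hitting $Q_l$. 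The case $\Delta = 0$ is the one subtlety: here the $V$-values need not strictly decrease, so I would argue instead by following a fixed optimal simple path. By Part 4 of Theorem \ref{thm:causality} there is an e.m. optimal simple path from $\xbar$ that becomes motionless at some node $\xtilde$; follow this path, which uses only optimal transitions, until it first reaches a motionless node. At that motionless node $\x$, if $\x \in Q_l$ we stop; otherwise $\x$ has a neighbor of strictly smaller $q$, we take a motionless step to it, and from there restart the argument from the new node. Since each motionless step strictly decreases $q$ along a finite vertex set, only finitely many motionless steps can occur, and between them the optimal-path segments are finite (simple paths leading to loops that are immediately collapsed), so the whole process is finite.

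The main obstacle is exactly this termination argument in the non-strict case $\Delta = 0$, where one must combine the ``optimal transition'' segments with the ``motionless jump'' segments and argue that the interleaving cannot go on forever; the clean way is the two-level count above — finitely many motionless jumps (bounded by the number of distinct values of $q$), each separated by a finite optimal-path segment. One should also double-check the base/edge cases: if $\xbar \in Q_l$ the path has length zero ($r = 0$) and there is nothing to prove; and one must confirm that in the ``motionless step'' case the node $y_{k+1}$ we jump to is indeed in $N(y_k)$, which is immediate since we chose it from $N(y_k)$ in the definition of $Q_l$ not being satisfied. Everything else is bookkeeping that follows directly from Theorems \ref{thm:causality} and \ref{thm:asymptotics_V}.
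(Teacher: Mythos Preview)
Your proof is correct and follows essentially the same approach as the paper's (very brief) argument: follow an optimal e.m.\ path to a motionless node, step to a neighbor with strictly smaller $q$ if not yet in $Q_l$, and repeat. The paper uses exactly your $\Delta = 0$ construction uniformly, without the case split --- termination follows because $q$ strictly decreases at each motionless step (so only finitely many such steps) and each intervening e.m.\ segment is finite by Part~4 of Theorem~\ref{thm:causality}; your separate $\Delta > 0$ treatment via strict $V$-monotonicity is valid but unnecessary.
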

\begin{proof}
Start with an optimal e.m. path for $\xbar$ leading to some motionless node
$\xtilde$. If $\xtilde \not \in Q_l$,
then there exists $x \in N(\xtilde)$ such that
$V(\xtilde) = q(\xtilde) > q(x) \geq V(x)$.
Now concatenate the $x$'s optimal e.m. path and repeat
the process until reaching $Q_l$.
\end{proof}

Using concatenated paths described in Lemma \ref{lm:causal_chain},
it is easy to prove by induction that,
starting with $Q_l$ marked as $Considered$, the order
of acceptance will be also causal (i.e., $\hat{x}$
will be $Accepted$ before $\xbar$).
%COMMENT REMOVED
We summarize the resulting Dijkstra-like method for
uncertain-horizon processes on graphs in Algorithm \ref{alg:D_modified}
(allowing for edge-dependent probabilities of termination $p_{ij}$'s).

\begin{algorithm}[hhhh]
%COMMENT REMOVED
\caption{A Dijkstra-like method for randomly-terminated processes on graphs.}
\label{alg:D_modified}
\noindent
\tt
\hspace*{15mm} start with all nodes marked as $Far$;\\
\hspace*{15mm} set $V_i \; := \; q_i$ for all $\x_i \in X$;\\
\hspace*{15mm} mark all local minima of $q$ as $Considered$;

\noindent
\hspace*{15mm} while ($Considered$ list is not empty) \; \{\\
\hspace*{20mm} let $V_{j} = V(\x_j)$ be the smallest $Considered$ value;\\
\hspace*{20mm} mark $\x_j$ $Accepted$;\\
\hspace*{20mm} for each not-yet-$Accepted$ $\x_i$ such that $\x_j \in N(\x_i)$ \; \{\\
\hspace*{25mm} set $V_i \, := \; \min
\left( V_i,  \;
K_{i j} + p_{i j} q_{j} + (1-p_{i j}) V_{j}
\right);$\\
\hspace*{25mm} if $\x_i$ is $Far$, mark it $Considered$;\\
\hspace*{20mm} \}\\
\hspace*{15mm} \}
\end{algorithm}

The direct procedure of identifying all local minima takes $O(\kappa M)$ operations
and may or may not be advantageous (after all, the algorithm will also
work if the entire $X$ is initially labeled as $Considered$).
However, this pre-processing becomes clearly worthwhile if several different
uncertain-horizon problems are to be solved on the same graph (with the same
terminal cost function $q: X \mapsto \R$) but with different
transition cost functions $K: X \times X \mapsto \R_{+,0}$.

Finally, we note that if $\Delta > 0$, then a similarly modified
Dial's method will also be applicable with buckets of width $\Delta$.

\subsection{Example: optimal idle-time processing.}
\label{ss:optimal_idler}
As a sample application for uncertain-horizon processes,
we consider the task of optimally using the idle-time to
minimize the expected waiting time of the first client.
We model arrival of client requests as a Poisson process with $\lambda$ requests
arriving on average per unit time; i.e., if $T_1$ is the time of the first request,
then $P(T_1 \geq t) = e^{-\lambda t}$ and
$P \left(T_1 \in [t, t+dt] \right) = \lambda e^{-\lambda t} \, dt$.
If $x_i \in X$ is the current state of the system when the first request arrives,
then $q(x_i)$ is the expected time for completing that request.
At every stage of the process we can either decide to stay in the current
state or start a transition to any state $x_j \in N(x_i)$.  Such a transition
will take some time $\tau_{ij} > 0$ and the system will not be capable of responding
to any incoming requests until that transition is completed.
Thus, even if $q(x_j) < q(x_i)$, this has to be balanced against
the expected additional waiting time $(\tau_{ij} - T_1)$ provided $T_1 < \tau_{ij}$,
i.e.,
$$
K_{ij} = \int\limits_0^{\tau_{ij}}  (\tau_{ij} - t) \lambda e^{-\lambda t} \, dt =
\frac{e^{-\lambda \tau_{ij}} - (1 -\lambda \tau_{ij})}{\lambda} \geq 0,
\qquad \forall i \neq j.
$$
%COMMENT REMOVED
If $V_i = V(x_i)$ is the minimal expected wait time starting from the state $x_i$, then
\begin{equation}
V_i \; = \;
\min \left\{ q_i, \;
\min\limits_{x_j \in \hat{N}(x_i)}
\left\{ K_{ij} + p_{ij} q_j + (1-p_{ij}) V_j \right\}
\right\},
\label{eq:discrete_idler}
\end{equation}
where $p_{ij} = P(T_1 < \tau_{ij}) = 1 - e^{-\lambda \tau_{ij}}$.
This system clearly satisfies the assumptions \Assume{1}, \Assume{2}, and \Assume{3}
and the label-setting methods of section \ref{sss:dijkstra_uncertain} are therefore applicable.

One interesting application of this problem is to minimize the expected
wait time of the first emergency caller by moving an ambulance during the idle-time.
In that case, $x$ is the current position of an ambulance and
$q(x)$ is the expected travel time from $x$ to the caller's location.
Suppose $d(x, \xtilde)$ is the minimum travel time through
the graph from $x$ to $\xtilde$ and $\tilde{P}(\xtilde)$ is the probability that
the next call originates from $\xtilde$.  Then
$$
q(x) = \sum\limits_{\xtilde \in X} \tilde{P}(\xtilde) \, d(x, \xtilde).
$$
If $\tilde{P}$ is non-zero for a small fraction of nodes in $X$ only,
then the distances $d(x, \xtilde)$ are best evaluated through a repeated
application of the standard Dijkstra's method.  Otherwise,
Floyd-Warshall algorithm will likely be more efficient \cite{Ahuja}.

%COMMENT REMOVED
Any global minimum of $q$ is obviously the optimal location to ``park''
an ambulance (or to build an ambulance depot).  But if the ambulance
starts at any other location $x_i$, the optimal strategy for moving it
while expecting the call is recovered by using the minimizer
in formula \eqref{eq:discrete_idler}.

\begin{remark}
\label{rem:approx_dynaprog}
After this paper was already submitted, we have found
a somewhat different single-ambulance model
introduced in a recent
technical report \cite{Zhang}, which
also contains an independently developed label-setting algorithm for
the ``no running cost'' subcase (i.e., all $K_{ij} = 0$).

Of course, realistic dynamic fleet management problems
require efficient control of multiple vehicles simultaneously.
The expected response time would then depend on the current position and
status of all vehicles, resulting in the exponential growth
of the state space.  This curse of dimensionality typically precludes
the direct use of the above approach, but a computationally feasible
alternative is provided by the methods of
{\em approximate dynamic programming} \cite{Powell}.
A recent example of the latter applied to the ambulance redeployment problem
can be found in \cite{MRHT}.

Non-iterative algorithms for approximate dynamic programming would be
clearly very useful, but, to the best of our knowledge,
no such methods are currently available.
\end{remark}

%COMMENT REMOVED

\section{Uncertain-horizon processes in continuous state-space.}
\label{s:contin_space}
We now derive the results parallel to those in section \ref{s:general_graph} but
in continuous state-space.  We start with the taxonomy of
``common'' deterministic optimal control problems
on $\domain \subset \R^n$.
We then show the relationship between the uncertain-horizon problems
and time-discounted optimal-stopping problems.
A modified version of Fast Marching Method is then
developed for the latter and illustrated
by a number of numerical examples in section \ref{s:examples}.

\subsection{Common types of optimal control problems in continuous state-space.}
\label{ss:contin_problem_types}

We will assume that $\domain$ is an open bounded subset of $\R^n$.
Suppose $A \in \R^m$ is a compact set of control values,
and the set of admissible controls $\mathcal{A}$ consists of all measurable functions
$\ba: \R \mapsto A$.  The dynamics of the system is defined by
\begin{eqnarray}
\nonumber
\y^{\prime}(s) &=& \fB(\y(s), \ba(s)),\\
\y (0) &=& \x \in \domain,
\label{eq:auton_dynamics}
\end{eqnarray}
where $\y(s)$ is the system state at the time $s$,
$\x$ is the initial system state,
and $\fB: \cdomain \times A \mapsto \R^n$ is the velocity.

We will also assume that the running cost $K: \cdomain \times A \mapsto [K_1, K_2]$ and
the discount rate $\beta \geq 0$ are known.
(The case $\beta=0$ corresponds to no discounting.)

%COMMENT REMOVED
%COMMENT REMOVED
%COMMENT REMOVED

We note that the partial differential equations derived for problems in this
subsection typically do not have classical (smooth) solutions on the entire domain
and the weak solutions are generally not unique.
The theory of {\em viscosity solutions} introduced by Crandall and Lions
\cite{CranLion} is used to overcome this difficulty, picking out
the unique weak solution coinciding with the value function of
the corresponding control problem.
Here we only provide a formal derivation and classification of PDEs;
rigorous proofs and many relevant references can be found in \cite{BardiDolcetta}.

\begin{itemize}
\item
{\em Finite-horizon problems:}\\
Suppose the process runs until the specified time $\TT$.
This allows us to define the total cost of using a control
$\ba(\cdot)$ starting from $\x$
with $T = \TT - t$ seconds left before the termination:
$$
\J(\x, t, \ba(\cdot)) = \int_0^{T} e^{-\beta s} K (\y(s), \ba(s)) \, ds
\, + \, e^{-\beta T} q(\y(T)),
$$
where $t$ is the starting time and $q:\domain \mapsto R$ is the terminal cost.
The value function $u: \domain \times [0, \TT]$ can be defined as
usual:
$
u(\x,t)  =  \inf \J(\x, t, \ba(\cdot)),
$
where the $\inf$ is taken over
$$
\mathcal{A}_0(\x) = \left\{
{\ba(\cdot) \in \mathcal{A}} \, \mid \,
\y(s) \in \cdomain \text{ for all } s \in [0, T] \right\}.
$$
The optimality condition shows that, for any $\tau \in (0,T)$,
$$
u(\x,t) = \inf_{\ba(\cdot)} \left\{
\int_0^{\tau} e^{-\beta s} K (\y(s), \ba(s)) \, ds
\; + \; e^{-\beta \tau} u \left(y(\tau), t+ \tau \right)
\right\}.
$$
Assuming that $u \in C^2$, Taylor-expanding and letting $\tau \to 0$, it is easy
to show that $u$ is a solution of a Hamilton-Jacobi-Bellman PDE:
%COMMENT REMOVED
\begin{eqnarray}
\nonumber
-u_t \, + \, \beta u \, - \,
\min\limits_{\ba \in A}
\left\{
K(\x, \ba) + \nabla u \cdot \fB ( \x, \ba )
\right\}
\, = \, 0,
&& \text{ for } \x \in \domain, t \in [0, \TT);\\
u(\x) \, = \, q(\x), && \text{ for } \x \in \domain, t = \TT.
\label{eq:HJB_general_timeD}
\end{eqnarray}
Here $u_t$ is the derivative with respect to time,
and $\nabla u$ is a vector of spatial derivatives.
This is a {\em terminal value problem} for a hyperbolic non-linear PDE.
The optimal trajectories coincide with the characteristics of this equation.
Since it is time-dependent, the numerical approximation can be
computed very efficiently by {\em time-marching}
(from $t = \TT$ into the past).
%COMMENT REMOVED

\item
{\em Undiscounted exit-time problems:}\\
Here, $\beta = 0$ and the process terminates upon exiting from $\cdomain$; i.e.,
$$
T = T_{\x, \ba} = \inf \{ t \in \R_{+,0} | \y(t) \not \in \cdomain \}
$$
%COMMENT REMOVED
and the terminal cost is defined only on the boundary
($q:\boundary \to \R$).
Since both the cost and dynamics are %COMMENT REMOVED
not directly dependent on time, the value function
does not depend on $t$ either and $u(\x)$ is defined by taking $\inf$ over
$\mathcal{A}$.
Similar formal reasoning shows that $u(\x)$ must satisfy
a static Hamilton-Jacobi-Bellman PDE with Dirichlet boundary
conditions
\begin{eqnarray}
\nonumber
-\min\limits_{\ba \in A}
\left\{
K(\x, \ba) + \nabla u \cdot \fB ( \x, \ba )
\right\}
\, = \, 0,
&& \text{ for } \x \in \domain;\\
u \, = \, q, && \text{ for } \x \in \boundary.
\label{eq:HJB_general_exitT}
\end{eqnarray}
If we assume $K_1>0$, this guarantees that wherever $u(\x) < \infty$,
all optimal paths lead to $\boundary$ in finite time.
Thus, all characteristics originate from $\boundary$
and $K_1>0$ implies the monotone growth of $u$ along each
characteristic.  This gives one reasons to hope for causal (label-setting)
numerical methods after the PDE is discretized on a grid.
Such efficient methods were, indeed, constructed for a number of special cases.
In the fully {\em isotropic} case, $K = K(\x)$, $A =S_{n-1}$, and
$\fB = f( \x) \ba$.  Here, a unit vector $\ba$ is our chosen
direction of motion, $f>0$ is the speed of motion, and \eqref{eq:HJB_general_exitT}
reduces to the Eikonal PDE:
\begin{eqnarray}
\nonumber
| \nabla u | f(\x) \, = \, K(\x)
&& \text{ for } \x \in \domain;\\
u \, = \, q, && \text{ for } \x \in \boundary.
\label{eq:Eikonal}
\end{eqnarray}
Two Dijkstra-like methods for discretizations of this equation were introduced
by Tsitsiklis \cite{Tsitsiklis_conference, Tsitsiklis} and Sethian \cite{SethFastMarcLeveSet}.
The former was done in the context of semi-Lagrangian discretizations
for control-theoretic problems; the latter (the Fast Marching Method) was
introduced for upwind finite-difference discretizations of
isotropic front propagation problems.
A detailed discussion of connections between these two approaches can
be found in \cite{SethVlad3}.
The Fast Marching Method was later extended by Sethian and collaborators
to higher order upwind discretizations on grids and unstructured meshes
in $\R^n$ and on manifolds.  All Dijkstra-like methods have
the same computational complexity of $O(M \log M)$,
where $M$ is the number of gridpoints.
Another algorithm introduced by Tsitsiklis
on an 8-neighbor stencil mirrors the logic of Dial's method and results
in $O(M)$ computational complexity (see \cite{Tsitsiklis} and
the generalizations in \cite{VladMSSP}).

A more general anisotropic case is obtained when
$K = K(\x, \ba)$, $A =S_{n-1}$, and
$\fB = f( \x, \ba ) \ba$ (i.e., both the running cost and the speed can depend
on the direction of motion).  Upwind discretizations of the resulting PDE
are generally not causal, making label setting methods inapplicable.
Ordered Upwind Methods circumvent this difficulty by dynamically extending
the stencil just enough to restore the causality, resulting in Dijkstra-like
computational complexity, but with an additional ``anisotropy coefficient''
\cite{SethVlad2, SethVlad3, AltonMitchell2}.

\item
{\em Infinite-horizon problems:}\\
Here, the cost of each control is defined as before, except
that $\beta > 0$, $T$ is always taken to be $+\infty$,
and there is no terminal cost:
$
\J(\x, \ba(\cdot)) = \int_0^{+\infty} e^{-\beta s} K (\y(s), \ba(s)) \, ds.
$
Since we are moving for an infinite time, the value function
$u$ is no longer time-dependent.
(The fact that $u$ is finite follows from $\beta > 0$
and the boundedness of $K$.
To remain in $\domain$, the $\inf$ is taken over $\mathcal{A}_0(\x)$.)
Similar formal reasoning shows that $u(\x)$ must satisfy
a static Hamilton-Jacobi-Bellman PDE
%COMMENT REMOVED
%COMMENT REMOVED
\begin{eqnarray}
\nonumber
\beta u \, - \,
\min\limits_{\ba \in A}
\left\{
K(\x, \ba) + \nabla u \cdot \fB ( \x, \ba )
\right\}
\, = \, 0,
&& \text{ for } \x \in \domain;\\
%COMMENT REMOVED
\label{eq:HJB_general_inf_Hor}
\end{eqnarray}
We note that there is also a well-known version of
this problem, where the process can also be terminated
upon hitting $\boundary$; see \cite{BardiDolcetta}.
In that case, $u$ is defined by taking $\inf$
over $\mathcal{A}$, resulting in Dirichlet boundary
conditions on $\boundary$ interpreted {\em in the viscosity sense}.
%COMMENT REMOVED
However, unlike the undiscounted exit-time problems defined above,
here the optimal trajectories can be periodic inside $\domain$
even if $K_1>0$.
This generally precludes any use of causal numerical methods.

\item
{\em Optimal stopping problems:}\\
Same as the infinite-horizon problem, but with an option
of deciding to terminate the process at any point $T$:
$$
\J(\x, \ba(\cdot), T) = \int_0^{T} e^{-\beta s} K (\y(s), \ba(s)) \, ds
\, + \, e^{-\beta T} q(\y(T)),
$$
where, as in the finite-horizon case, $q:\domain \mapsto R$.
The value function can be defined by taking $\inf$ over all controls
and all termination times:
$$
u(\x) \;  =  \; \inf\limits_{\ba(\cdot) \in \mathcal{A}_0(\x), \, T \geq 0} \,
\J(\x, \ba(\cdot), T).
$$
The direct consequence of this is the fact that
$u(\x) \leq q(\x)$ on $\domain$ and,
wherever $u$ is less than $q$,
it should be formally a solution of \eqref{eq:HJB_general_inf_Hor}.
In this case, $u$ is the viscosity solution of
the following {\em variational inequality of obstacle type} :
\begin{eqnarray}
\label{eq:HJB_opti_stop}
\max \left\{
\, u(\x)-q(\x), \quad
 \beta u \, - \,
\min\limits_{\ba \in A}
\left\{
K(\x, \ba) + \nabla u(\x) \cdot \fB ( \x, \ba )
\right\} \,
\right\}
\, = \, 0,
&& \text{ for } \x \in \domain.
%COMMENT REMOVED
%COMMENT REMOVED
\end{eqnarray}
See \cite{BardiDolcetta} for a rigorous derivation.
Let $\stay = \{ \x \in \domain \mid u(\x) = q(\x) \}$.
Then the PDE \eqref{eq:HJB_general_inf_Hor} is
satisfied on $\move$
and $\B = \fboundary$ forms a {\em free boundary}
for this problem.
It is not hard to show that
the optimal trajectories cannot be periodic, provided
$K(\x, \ba) > \beta q(\x)$ for all $\x$ and $\ba$.
In this case, all characteristics of
this PDE run into $\move$ from $\B$
and the space-marching numerical approach is feasible.
However, since $\B$ is a priori unknown,
this presents an additional challenge for label-setting methods.
(The equivalent properties of uncertain-horizon processes
will be considered in detail in the next subsection.)

We note that the analysis of optimal stopping problems can be reduced to that
of infinite horizon problems if we extend the set of control values
by adding a special ``motionless'' control
$\ba_0$ such that $\fB(\x, \ba_0)=0$ and $K(\x, \ba_0)=\beta q(\x)$
for all $\x \in \domain$.
Similarly, any infinite horizon problem with a motionless control $\ba_0$
available in every state $\x$ can also be viewed as an optimal stopping problem
by setting $q(\x) = K(\x, \ba_0(\x)) / \beta.$
\end{itemize}

%COMMENT REMOVED

\subsection{The value function for uncertain-horizon processes.}
\label{ss:contin_value_func}

We now consider a randomly-terminated state-restricted undiscounted problem.
The dynamics is yet again restricted to $\cdomain$,
the state evolution is described by \eqref{eq:auton_dynamics},
where $\fB(\x, \ba)$ is Lipschitz-continuous in $\x$ and
continuous in $\ba$.
We assume that both $K$ and $q$ are lower semi-continuous functions
satisfying the following:\\

\Bssume{1} $q:\cdomain \mapsto R$ is the terminal cost such that \\
$$q_1 \leq q(\x) \leq q_2, \quad \forall \x \in \cdomain, \qquad
\text{ where } q_1 = \min_{\x} q(\x) \geq 0.$$

\Bssume{2} $K:\cdomain \times A \mapsto R$ is the running cost such that
$0 \leq K_1 \leq K(\x, \ba) \leq K_2, \quad \forall \x \in \cdomain, \ba \in A.$\\

Some of the properties proven below, also rely on a stronger assumption:\\

\Bssume{3} The velocity $\fB: \cdomain \times A \mapsto R^n$ is bounded
(i.e., $|\fB| \leq F_2$) and
the motion in every direction is always possible, i.e.,\\
$\hspace*{2cm}
\forall \x \in \cdomain,$ and all $\bv \in S_{n-1}$, $\quad \exists \ba \in A \text{ s.t. }
\bv \cdot \fB(\x, \ba) = |\fB(\x, \ba)| \geq F_1> 0.$\\

We assume that the terminal event is a result of some Poisson process and
the time till termination has an exponential probability distribution;
i.e., $P(T \geq t) = e^{-\lambda t}$.
%COMMENT REMOVED
Thus, starting from $\x$, the cost
of any individual control $\ba(\cdot) \in \mathcal{A}_0(\x)$ is
\begin{eqnarray}
\label{eq:randomT_trajCost_original}
\J(\x, \ba(\cdot)) & = &
\int_0^{+\infty} \lambda e^{-\lambda t} \left[
\int_0^{t} K (\y(s), \ba(s)) \, ds
\, + \, q(\y(t))
\right] \, dt \\
\label{eq:randomT_trajCost}
& = &
\int_0^{+\infty} e^{-\lambda s} \left[
K (\y(s), \ba(s)) \, + \, \lambda q(\y(s))
\right] \, ds
\end{eqnarray}
where the second equality follows from the Fubini's Theorem.
This shows that any randomly-terminated problem can be converted to
a discounted infinite-horizon problem with $\beta = \lambda$ and
$\tilde{K} (\x, \ba) = K(\x, \ba) + \lambda q(\x).$
This relationship is well-known and has been previously used in
\cite{BoukasHaurieMichael} to treat even more general
random-termination problems
(e.g., with $\lambda = \lambda(\x)$).
The value function can then be defined as usual
$v(\x) = \inf\limits_{\ba(\cdot) \in \mathcal{A}_0(\x)} \J(\x, \ba(\cdot)).$
\begin{remark}
\label{rem:nonnegative_costs}
In view of \eqref{eq:randomT_trajCost},
adding any constants $C_1$ and $C_2$ to functions $K$ and $q$
respectively, will result in adding $C_1/\lambda + C_2$
to the value function $v$.
Thus, the above assumptions about the non-negativity of
$q_1$ and $K_1$ can be made without any loss of generality.
\end{remark}
Assuming $\fB$, $K$ and $q$ are Lipschitz,
the value function $v$ is
a $\cdomain$-{\em constrained viscosity solution}
\cite{BardiDolcetta} of a Hamilton-Jacobi PDE:
\begin{equation}
\label{eq:domain_restricted_sol}
\min_{\ba \in A} \{ \nabla v(\x) \cdot \fB(\x, \ba) + K(\x, \ba)\}
\, + \, \lambda \left( q(\x) - v(\x) \right)
\; = \; 0.
\end{equation}

%COMMENT REMOVED
%COMMENT REMOVED
%COMMENT REMOVED
%COMMENT REMOVED
%COMMENT REMOVED
%COMMENT REMOVED
Before using the fact that $v(\x)$ is the viscosity solution
of the corresponding variational inequality,
we first prove a number of properties of the value function directly.

A standard treatment of state-constrained infinite-horizon
problems shows that, for sufficiently regular costs, dynamics, and $\boundary$,
$v(\x)$ is bounded uniformly continuous.
Using an additional controllability assumption \Bssume{3}
we provide a direct proof of local Lipschitz-continuity of $v(\x)$.
\begin{lemma}
Assume \Bssume{1}-\Bssume{3}.
Then the value function of the randomly-terminated problem
is locally Lipschitz-continuous on $\domain$
with the Lipschitz constant
$L_v = (K_2 + \lambda q_2) / F_1$.
\label{lm:v_Lip}
\end{lemma}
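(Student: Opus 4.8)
The plan is to establish the bound $|v(\x) - v(\z)| \le L_v \|\x - \z\|$ for $\x, \z$ in any compact subset of $\domain$ by a standard trajectory-comparison argument: given a near-optimal control for the starting point $\z$, construct a control for $\x$ whose trajectory first steers directly toward $\z$ at (essentially) unit speed in the $\budir{z}{x}$ direction, then re-joins the $\z$-trajectory and mimics it thereafter. The extra cost incurred during the short ``catch-up'' phase is what produces the Lipschitz constant.

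More precisely, first I would fix $\x, \z \in \domain$ close enough that the straight segment between them stays inside $\cdomain$ (this is where ``local'' enters; controllability \Bssume{3} combined with boundedness of $\fB$ makes this legitimate for nearby points, and one may need to keep the catch-up path inside $\cdomain$ using the same assumption near $\boundary$, which is the point requiring care). Let $\ba_\z(\cdot)$ be a control with $\J(\z, \ba_\z(\cdot)) \le v(\z) + \epsilon$, producing trajectory $\y_\z(\cdot)$. By \Bssume{3} there is a control driving the $\x$-trajectory along the unit vector $\bv = \budir{z}{x}$ with speed $|\fB| \ge F_1$, so after time $\hat{t} \le \|\x - \z\|/F_1$ we reach $\z$; thereafter apply $\ba_\z(\cdot)$ verbatim, so $\y_\x(s) = \y_\z(s - \hat{t})$ for $s \ge \hat{t}$. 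Call this control $\ba_\x(\cdot)$; it is admissible for $\x$ (lies in $\cdomain$ by construction).

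Next I would estimate $\J(\x, \ba_\x(\cdot))$ using the convenient second form \eqref{eq:randomT_trajCost}, $\J = \int_0^\infty e^{-\lambda s}[K(\y(s),\ba(s)) + \lambda q(\y(s))]\,ds$. Split the integral at $s = \hat{t}$. On $[0,\hat{t}]$ the integrand is at most $K_2 + \lambda q_2$ and $e^{-\lambda s} \le 1$, so that piece is at most $(K_2 + \lambda q_2)\hat{t} \le (K_2 + \lambda q_2)\|\x-\z\|/F_1 = L_v \|\x - \z\|$. On $[\hat{t},\infty)$, substituting $s = \hat{t} + r$ and using $\y_\x(\hat t + r) = \y_\z(r)$ and $e^{-\lambda(\hat t + r)} \le e^{-\lambda r}$ (with all integrands non-negative, by \Bssume{1}--\Bssume{2} and $\lambda > 0$), this tail is bounded above by $\J(\z, \ba_\z(\cdot)) \le v(\z) + \epsilon$. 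Hence $v(\x) \le \J(\x, \ba_\x(\cdot)) \le v(\z) + L_v\|\x-\z\| + \epsilon$. Letting $\epsilon \to 0$ and then swapping the roles of $\x$ and $\z$ gives $|v(\x)-v(\z)| \le L_v\|\x-\z\|$.

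The main obstacle is the state constraint: the catch-up segment from $\x$ toward $\z$, and possibly the tail trajectory viewed from the shifted start, must remain in $\cdomain$. Near $\boundary$ this is not automatic from \Bssume{3} alone if applied naively, so I would either restrict to points at a fixed positive distance from $\boundary$ (which still yields local Lipschitz-continuity on $\domain$, exactly as stated), or invoke the standard state-constrained controllability machinery to perturb the path inward at controlled extra cost. The non-negativity of $K$ and $q$ (available without loss of generality by Remark \ref{rem:nonnegative_costs}) is essential for the monotonicity estimate $e^{-\lambda(\hat t + r)} \le e^{-\lambda r}$ to go through in the right direction; I would flag that explicitly.
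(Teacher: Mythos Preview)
Your proposal is correct and follows essentially the same route as the paper: steer along the straight segment from $\x$ to $\z$ using \Bssume{3}, bound the catch-up cost by $(K_2+\lambda q_2)\hat t$, and then use non-negativity of the integrand to absorb the $e^{-\lambda\hat t}$ factor on the tail. The only cosmetic difference is that the paper invokes the dynamic programming principle directly (writing the tail as $e^{-\lambda\tau}v(\x_2)$ rather than splitting the full integral and taking an $\epsilon$-suboptimal control), and it handles the state constraint exactly as you suggest---by simply assuming the segment lies in $\domain$, which suffices for the \emph{local} Lipschitz claim.
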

\begin{proof}
Suppose $\x_1, \x_2 \in \domain$ are such that
the straight line segment connecting these two points lies inside
$\domain$.  Suppose $\ba(\cdot)$ is a control and $\y(\cdot)$
is the corresponding trajectory such that
we follow that straight line from $\x_1$ to $\x_2$
and then switch to the optimal control of $\x_2$:
$$
\y(0) = \x_1; \qquad
\y(\tau) = \x_2; \qquad
\J(\x_2, \ba(\cdot\, +\tau) ) = v(\x_2);
$$
$$
\y'(t) \cdot \frac{\x_2 -\x_1}{|\x_2 -\x_1|}  = |\y'(t)| =
|\fB(\y(t), \ba(t))|,
\qquad \text{ for }\forall t \in [0, \tau).
$$
The existence of such $\ba(\cdot)$ and the fact that
$\tau \leq \frac{|\x_2-\x_1|}{F_1}$
follow from the controllability assumption \Bssume{3}.
Then,  from the optimality principle,
\begin{eqnarray*}
v(\x_1) &\leq& \J(\x_1, \ba(\cdot)) =
\int_0^{\tau} e^{-\lambda s} \left[
K (\y(s), \ba(s)) + \lambda q(\y(s))
\right] \, ds \, + \, e^{-\lambda \tau} v(\x_2)\\
& \leq &
\tau  (K_2 + \lambda q_2) + v(\x_2)
\; \leq \;
\frac{|\x_2-\x_1|}{F_1}(K_2 + \lambda q_2) + v(\x_2)
= |\x_2-\x_1| L_v  + v(\x_2).
\end{eqnarray*}
%COMMENT REMOVED
%COMMENT REMOVED
To complete the proof, the above argument can be repeated
reversing the roles of $\x_1$ and $\x_2$.
We note that this proof does not use any additional assumptions on
the regularity of $K$ or $q$.
%COMMENT REMOVED
\end{proof}

%COMMENT REMOVED

We now list three essential assumptions that
provide a continuous equivalent of \Assume{1}-\Assume{3},
which defined the discrete randomly-terminated problem
in section \ref{ss:graph_uncertain_horizon}.
We assume the existence of a special control $\ba_0 \in A$ such that\\

\AssumePrime{1}
$ \hspace*{2cm} \fB(\x, \ba_0) = 0, \qquad \forall \x \in \cdomain.$\\

\AssumePrime{2}
$\hspace*{2cm} K(\x, \ba_0) = 0, \qquad \forall \x \in \cdomain.$\\

\noindent
We will also use $\hat{A} = A \backslash\{\ba_0\}$ for the set of all other control values.\\

\AssumePrime{3}
$\hspace*{2cm}
\widebar{K}_1 =
\inf\limits_{\x \in \domain, \, \ba \in \hat{A}} K(\x, \ba) \, \geq \,0.$

\vspace*{2mm}
Assumptions \AssumePrime{1}-\AssumePrime{3} allow us to restate
\eqref{eq:domain_restricted_sol} as an
{\em optimal stopping problem}
\begin{equation}
\max \left\{
\, v(\x)-q(\x), \quad
 \lambda \left( v(\x) - q(\x) \right) \, - \,
\min\limits_{\ba \in \hat{A}}
\left\{
K(\x, \ba) + \nabla v(\x) \cdot \fB ( \x, \ba )
\right\} \,
\right\}
\, = \, 0;
\label{eq:obstacle_form}
\end{equation}
or, in the isotropic case (when $A = S_{n-1} \cup \{\ba_0\}$,
and  $\fB ( \x, \ba ) = f(\x) \ba, \; K(\x, \ba) = K(\x), \; \forall \ba \neq \ba_0$),
\begin{equation}
\max \left\{
\, v(\x)-q(\x), \;
 \lambda \left( v(\x) - q(\x) \right) \, - \, K(\x)  \, + \, |\nabla v(\x)| f(\x)
\,
\right\}
\, = \, 0.
\label{eq:obstacle_form_iso}
\end{equation}

Using the notation $[z]^{-} = \min (z, 0)$,
%COMMENT REMOVED
we can rewrite this variational inequality as follows:
\begin{equation}
v(\x) \; = \; q(\x) \; + \; \frac{1}{\lambda}
\left[ \min\limits_{\ba \in \hat{A}}
\left\{
K(\x, \ba) + \nabla v(\x) \cdot \fB ( \x, \ba )
\right\}
\right]^{-},
\label{eq:general_var_ineq_1}
\end{equation}
or, in the isotropic case,
\begin{equation}
v(\x) \; = \; q(\x) \; + \; \frac{1}{\lambda}
\left[ K(\x) - f(\x) | \nabla v(\x) | \right]^{-}.
\label{eq:Eikonal_var_ineq}
\end{equation}

The PDE holds (with the $[\cdot]^-$ omitted) on
the domain $\cdomain \backslash \M$.
%COMMENT REMOVED
In section \ref{ss:FMM_modified} we discuss the discretized version
of the isotropic variational inequality \eqref{eq:Eikonal_var_ineq}
and a label-setting algorithm for solving it efficiently.
We note that several properties of the value function
(e.g., Part 5 of Theorem \ref{thm:v_causality})
can also be obtained directly from the characteristic
ODEs of \eqref{eq:general_var_ineq_1} using
stronger regularity assumptions on the dynamics and cost.
However, our proofs provided below are more general.
Examples studied in sections \ref{ss:contin_idle} and \ref{ss:maze}
use discontinuous $f$ and $K$ to test our numerical method;
additional examples (using discontinuous $q$) were omitted
to save space.
A detailed theoretical discussion of viscosity solutions to HJB PDEs with
discontinuous Lagrangian can be found in \cite{Soravia_DiscontinLagrangian}.
%COMMENT REMOVED

The rest of this subsection is based on direct applications of
the control-theoretic interpretation rather than on the theory
of viscosity solutions.
We will refer to a control $\ba(\cdot)$
as {\em eventually motionless} (or {\em e.m.}) if $\exists T$ s.t.
$\ba(t) =\ba_0, \, \forall t \geq T.$  We will say that
the corresponding e.m. trajectory $\y(\cdot)$ becomes motionless at the point $\y(T)$.
Given a fixed control $\ba(\cdot)$, we will further use a notation
$$
\omega(t) = \int_0^{t} K (\y(s), \ba(s)) \, ds
\, + \, q(\y(t)),
$$
for the deterministic cost of it if the termination happens at the time $t$.
We note that, for an e.m. control, formula \eqref{eq:randomT_trajCost_original}
can be rewritten as
\begin{equation}
\label{eq:traj_cost_terminal}
\J(\x, \ba(\cdot)) \; = \;
\int_0^{T} \lambda e^{-\lambda t} \omega(t) \, dt \; + \; e^{-\lambda T} \omega(T).
\end{equation}
We will also define a ``motionless'' set
$\M = \{ \x \mid v(\x) = \J(\x, \ba_0)  = q(\x) \}.$
We note that, if the value function is continuous, then the set $\M$
is closed (since $(v-q)$ is upper semi-continuous).

%COMMENT REMOVED
%COMMENT REMOVED
%COMMENT REMOVED
%COMMENT REMOVED
%COMMENT REMOVED
%COMMENT REMOVED
%COMMENT REMOVED
%COMMENT REMOVED

The following two theorems list a number of properties of the value function
and of optimal controls/trajectories.

\begin{thm}
Assume \Bssume{1},\Bssume{2}, and \AssumePrime{1}-\AssumePrime{3}.
Then, for all $\x \in \domain$,
\begin{enumerate}

\item
$v(\x) \in [q_1,  q(\x)]$.

\item
$q(\x)=q_1$ implies $\x \in \M$.

\item
$\forall \x \in \domain, \varepsilon > 0, \, \exists$
an $\varepsilon$-suboptimal e.m. control $\ba^{\varepsilon}(\cdot).$

\item
If $\widebar{K}_1 > 0$, then every optimal control is e.m.

\item
If $\y^*(t)$ is an optimal trajectory,
then $v(\y^*(t))$ is a monotone non-increasing function of $t$
(and monotone decreasing up to its motionless point if $\widebar{K}_1 > 0$).

\item
If \Bssume{3} holds, $\widebar{K}_1 > 0$, and $\y^*(t)$ is an optimal trajectory
for $\x = \y^*(0)$, then
$$|\y^*(t) - \x| \geq t \frac{\widebar{K}_1 F_1}{K_2 + \lambda q_2},$$
provided the straight line from $\x$ to $\y^*(t)$ lies within $\cdomain$.
%COMMENT REMOVED
%COMMENT REMOVED
%COMMENT REMOVED

\item
Let $D(\x_1, \x_2)$ be the minimum distance
%COMMENT REMOVED
from $\x_1$ to $\x_2$ (minimized over all allowable trajectories in $\cdomain$)
and $D = \sup\limits_{\x_1, \x_2 \in \cdomain} D(\x_1, \x_2)$.
If \Bssume{3} also holds, then $v(\x) \leq q_1 + D L_v$.

\end{enumerate}
\label{thm:v_causality}
\end{thm}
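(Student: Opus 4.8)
The plan is to prove the seven parts roughly in the listed order, since the later ones lean on the earlier ones.

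Parts 1--2 should follow immediately from the constant control $\ba_0$: using $\ba_0$ forever gives cost exactly $q(\x)$ by \AssumePrime{1}--\AssumePrime{2}, so $v(\x)\le q(\x)$; and $v(\x)\ge q_1$ follows from \Bssume{1}--\Bssume{2} and $\lambda>0$ applied to \eqref{eq:randomT_trajCost}, since the integrand is at least $\lambda q_1$. Part 2 is then the case $q(\x)=q_1$, forcing $v(\x)=q_1=q(\x)$, i.e.\ $\x\in\M$. For Part 3, given $\ba(\cdot)$ that is $\varepsilon/2$-suboptimal, I would truncate it: since $e^{-\lambda T}\omega(T)\to 0$ as $T\to\infty$ (by boundedness of $K$, $q$ and the exponential), the tail contribution past some large $T$ is below $\varepsilon/2$, so switching to $\ba_0$ at time $T$ changes the cost by at most $\varepsilon/2$ while making the control e.m. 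One must check the truncated control stays in $\cdomain$; that is automatic because $\ba_0$ keeps the state fixed. For Part 4, if $\widebar K_1>0$, any control that uses $\hat A$ on a set of positive measure arbitrarily far in time keeps injecting running cost of size $\ge\widebar K_1$ discounted by $e^{-\lambda s}$; more carefully, compare with the e.m.\ control obtained by freezing at the optimal trajectory's position at a large time $T$ — by the computation in Part 3 the frozen control is strictly cheaper unless the original was already e.m., contradicting optimality.

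For Part 5, I would use the dynamic programming (optimality) principle: for an optimal trajectory $\y^*$ and $t_1<t_2$,
\[
v(\y^*(t_1)) \;=\; \int_{t_1}^{t_2} e^{-\lambda(s-t_1)}\bigl[K(\y^*(s),\ba^*(s)) + \lambda q(\y^*(s))\bigr]\,ds \;+\; e^{-\lambda(t_2-t_1)} v(\y^*(t_2)),
\]
and since the integrand is $\ge \lambda q_1 \ge 0$ while also $v(\y^*(t_2))\le q(\y^*(t_2))\le q_2$, a short estimate shows $v(\y^*(t_1))\ge v(\y^*(t_2))$; strict monotonicity when $\widebar K_1>0$ comes from the integrand being bounded below by $\widebar K_1>0$ before the motionless point, so the integral term is strictly positive. (This mirrors Part 2 of Theorem \ref{thm:causality} in the discrete case.) Part 6 combines Part 5 with the Lipschitz bound of Lemma \ref{lm:v_Lip}: along the straight segment from $\x$ to $\y^*(t)$ we have $v(\x)-v(\y^*(t)) \le L_v |\y^*(t)-\x|$, while from the optimality principle and $\widebar K_1>0$ (controllability \Bssume{3} ensures the trajectory can actually move) one gets $v(\x)-v(\y^*(t)) \ge \widebar K_1 \cdot(\text{time the trajectory was moving})\ge$ a bound proportional to $t$; rearranging with $L_v=(K_2+\lambda q_2)/F_1$ gives $|\y^*(t)-\x|\ge t\,\widebar K_1 F_1/(K_2+\lambda q_2)$. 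The subtle point is relating elapsed time to the moving-time and to the discount factor — one has to be slightly careful that $e^{-\lambda t}\approx 1$ is not needed, and instead bound $1-e^{-\lambda t}\ge$ something, or better, integrate $K\ge\widebar K_1$ against $e^{-\lambda s}$ directly and use $q\le q_2$ to absorb the $v(\y^*(t))$ term.

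Finally, Part 7: pick any $\x$, let $\x_1\in\cdomain$ be a global minimizer of $q$ (so $q(\x_1)=q_1$), and by \Bssume{3} there is an admissible trajectory from $\x$ to $\x_1$ in $\cdomain$ of length $\le D$; since $|\fB|\ge F_1$ along it in the chosen direction, this takes time $\tau \le D/F_1$. Following it and then freezing at $\x_1$ (using $\ba_0$, which costs nothing and yields terminal cost $q_1$) gives, via \eqref{eq:randomT_trajCost} and the optimality principle,
\[
v(\x) \;\le\; \int_0^\tau e^{-\lambda s}\bigl[K(\y(s),\ba(s))+\lambda q(\y(s))\bigr]\,ds + e^{-\lambda\tau} v(\x_1) \;\le\; \tau(K_2+\lambda q_2) + q_1 \;\le\; q_1 + \frac{D}{F_1}(K_2+\lambda q_2) = q_1 + D L_v.
\]
The main obstacle I anticipate is Part 6: carefully converting "optimal trajectory has moved distance $d$" into a lower bound on $v(\x)-v(\y^*(t))$ that is linear in $t$, keeping track of where controllability, $\widebar K_1>0$, and the discount factor each enter, rather than just quoting the characteristic ODEs. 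Everything else is a short application of the optimality principle together with Parts 1 and Lemma \ref{lm:v_Lip}.
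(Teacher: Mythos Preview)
Your overall plan is sound, and Parts 1--3 and 7 match the paper's argument closely. However, there is a genuine gap in your Part 5, and a related one in Part 4.

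For Part 5, the proposed ``short estimate'' does not work. Using only that the integrand is $\ge \lambda q_1$ and that $v(\y^*(t_2)) \le q_2$, the optimality identity yields
\[
v(\y^*(t_1)) \;\ge\; (1 - e^{-\lambda(t_2-t_1)})\,q_1 \;+\; e^{-\lambda(t_2-t_1)}\, v(\y^*(t_2)),
\]
and since $q_1 \le v(\y^*(t_2))$ by Part 1, the right-hand side is \emph{at most} $v(\y^*(t_2))$; the inequality is vacuous for monotonicity. The paper's key move is to differentiate the optimality relation: writing $z(t) = v(\y^*(t))$, one obtains for a.e.\ $t$
\[
z'(t) \;=\; \lambda\bigl(z(t) - q(\y^*(t))\bigr) \;-\; K(\y^*(t),\ba^*(t)),
\]
and now Part 1 enters \emph{pointwise along the trajectory}: $z(t) \le q(\y^*(t))$ gives $z'(t) \le -K \le 0$, with $z'(t) \le -\widebar K_1 < 0$ whenever $\ba^*(t) \ne \ba_0$. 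The missing ingredient in your sketch is precisely this local use of $v \le q$, rather than the global constants $q_1, q_2$. Once the differential inequality is in hand, Part 6 is immediate: $v(\x) - v(\y^*(t)) \ge t\,\widebar K_1$ with no discount-factor subtlety at all, and combining with Lemma \ref{lm:v_Lip} gives the stated bound.

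For Part 4, ``freezing at a large time $T$'' is likewise insufficient, and the appeal to Part 3 is misleading: the computation in Part 3 produces an \emph{upper} bound on the truncated cost, not a proof that truncation is strictly cheaper. The paper instead sets $\tilde q_1 = \inf_{t>0} q(\y^*(t))$ and picks a specific $\tau$ with $q(\y^*(\tau)) < \tilde q_1 + \widebar K_1/\lambda$; at such a $\tau$ one checks directly that
\[
\int_\tau^\infty e^{-\lambda s}\bigl[K(\y^*(s),\ba^*(s)) + \lambda q(\y^*(s))\bigr]\,ds \;\ge\; e^{-\lambda\tau}\Bigl(\frac{\widebar K_1}{\lambda} + \tilde q_1\Bigr) \;>\; e^{-\lambda\tau} q(\y^*(\tau)),
\]
so freezing at $\y^*(\tau)$ strictly lowers the cost, contradicting optimality of the non-e.m.\ control. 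The choice of $\tau$ near the trajectory's $q$-infimum is essential here; an arbitrary large $T$ need not satisfy the needed inequality.
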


\begin{proof} \mbox{ }\\

\vspace*{-3mm}
\begin{enumerate}
\item
\AssumePrime{1}, and \AssumePrime{2} ensure that
$v(\x) \leq \int_0^{+\infty} e^{-\lambda s}  \lambda q(\x) \, ds  = q(\x) $,\\
while \AssumePrime{3} shows that for every control
$\J(\x, \ba(\cdot))  \geq
\int_0^{+\infty} e^{-\lambda s} \lambda q(\y(s)) \, ds \geq q_1.$
\item
In particular, if $q_1 = q(\x)$ then $v(\x) = q(\x)$ and $\ba^*(t) = \ba_0$ is the optimal control.
\item
Suppose $\ba_1(\cdot)$ is a (possibly non-e.m.) $\frac{\varepsilon}{2}$-suboptimal
control and $\y_1(\cdot)$ is the corresponding trajectory starting from $\x$; i.e.,
$$
\J(\x, \ba_1(\cdot)) \; = \;
\int_0^{+\infty} e^{-\lambda s} \left[
K (\y_1(s), \ba_1(s)) \, + \, \lambda q(\y_1(s))
\right] \, ds  \; \leq \; v(\x) + \frac{\varepsilon}{2}.
$$
Define a control $\ba_2(t) = \ba_1(t)$ for $t < \tau$ and
$\ba_2(t) = \ba_0$ after that.  The corresponding trajectory $\y_2$ is
a truncated version of $\y_1$.
\begin{eqnarray*}
&&\J(\x, \ba_2(\cdot)) =
\int_0^{\tau} e^{-\lambda s} \left[
K (\y_1(s), \ba_1(s)) \, + \, \lambda q(\y_1(s))
\right] \, ds + \int_{\tau}^{+\infty} e^{-\lambda s} \lambda q(\y_1(\tau)) \, ds\\
& = &
\int_0^{+\infty} e^{-\lambda s} \left[
K (\y_1(s), \ba_1(s)) \, + \, \lambda q(\y_1(s))
\right] \, ds + \int_{\tau}^{+\infty} e^{-\lambda s}
\left[\lambda q(\y_1(\tau)) - K (\y_1(s), \ba_1(s)) \, - \, \lambda q(\y_1(s)) \right] \, ds\\
& \leq &
\J(\x, \ba_1(\cdot)) \, + \,
\left[ q_2 - q_1 - \frac{K_1}{\lambda} \right] e^{-\lambda \tau}.
\end{eqnarray*}
Thus, for all sufficiently large $\tau$,
$$\J(\x, \ba_2(\cdot)) \leq \J(\x, \ba_1(\cdot)) + \frac{\varepsilon}{2}
\leq v(\x) + \varepsilon; \hspace*{2cm}
\text{ i.e., $\ba_2(\cdot)$ is $\varepsilon$-suboptimal.}
$$
\item
Suppose $\ba^*(t)$ is a non-e.m. optimal control starting from $\x$ and
$\y^*(t)$ is the corresponding trajectory.
Let $\tilde{q}_1 = \inf\limits_{t>0} q(\y^*(t)).$  Then $\exists \tau > 0$ such
that $ q(\y^*(\tau)) - \tilde{q}_1 < \widebar{K}_1 / \lambda$.
Then
$$
\int_{\tau}^{+\infty} e^{-\lambda s} \left[
K (\y^*(s), \ba^*(s)) \, + \, \lambda q(\y^*(s))
\right] \, ds
\; \geq \;
e^{-\lambda \tau} \left[
\widebar{K}_1 / \lambda \, + \, \tilde{q}_1
\right]
\; > \;
e^{-\lambda \tau} q(\y^*(\tau));
$$
i.e., making
that trajectory motionless
at $\y^*(\tau)$ would result in a lower cost,
contradicting the optimality of $\ba^*(t)$.
%COMMENT REMOVED
%COMMENT REMOVED
%COMMENT REMOVED
%COMMENT REMOVED
%COMMENT REMOVED
%COMMENT REMOVED
%COMMENT REMOVED

\item
Suppose $\ba^*(\cdot)$ is an optimal control starting from $\x$,
$\y^*(t)$ is the corresponding optimal trajectory,
and define $z(t) = v(\y^*(t))$.
By the optimality principle,
$$
z(0) =
\int_0^{t} e^{-\lambda s} \left[
K (\y^*(s), \ba^*(s)) + \lambda q(\y^*(s))
\right] \, ds \, + \, e^{-\lambda t} z(t).
$$
%COMMENT REMOVED
By Lebesgue's differentiation theorem, $z(t)$ is differentiable for
almost all $t>0$ and
$$
0 = e^{-\lambda t} \left[
K (\y^*(t), \ba^*(t)) + \lambda q(\y^*(t))
\right]
\, + \, e^{-\lambda t} z'(t)
\, - \, \lambda e^{-\lambda t} z(t);
$$
Hence, $z'(t) = \lambda
\left[ z(t) - q(\y^*(t)) \right] - K (\y^*(t), \ba^*(t))$
for almost all $t>0$.
Since $z(t) = v(\y^*(t)) \leq q(\y^*(t))$, we have $z'(t) \leq 0$;
moreover, this inequality becomes strict if $\widebar{K}_1 > 0$ and
$\ba^*(t) \neq \ba_0$.

We note that a similar argument combined with
the optimality principle shows that, even for a non-optimal
control $\ba(\cdot)$, the value function along
the corresponding trajectory $z(t) = v(\y(t))$
will satisfy
$z'(t) \geq \lambda
\left[ z(t) - q(\y(t)) \right] - K (\y(t), \ba(t))$
for almost every $t>0$.

%COMMENT REMOVED
%COMMENT REMOVED
%COMMENT REMOVED
%COMMENT REMOVED
%COMMENT REMOVED
%COMMENT REMOVED
%COMMENT REMOVED
%COMMENT REMOVED
%COMMENT REMOVED
%COMMENT REMOVED
%COMMENT REMOVED
%COMMENT REMOVED

\item
As shown above, $z'(t) \leq - \widebar{K}_1$;
so, $v(\x) - v(\y^*(t)) \geq t \widebar{K}_1$.
On the other hand, by Lemma \ref{lm:v_Lip},
$v(\x) - v(\y^*(t)) =  |v(\x) - v(\y^*(t))| \leq L_v |\x - \y^*(t)|.$
Thus,
$|\y^*(t) - \x| \geq t \frac{\widebar{K}_1 F_1}{K_2 + \lambda q_2}.$

\item
Since $v$ is locally Lipschitz by Lemma \ref{lm:v_Lip}, this
bound is obtained by following the trajectory from $\x$ to
$\x_0^* = \argmin\limits_{\x_0 \in \cdomain} q(\x_0)$.
\end{enumerate}
\end{proof}

%COMMENT REMOVED

%COMMENT REMOVED
%COMMENT REMOVED

%COMMENT REMOVED

%COMMENT REMOVED
%COMMENT REMOVED

%COMMENT REMOVED

%COMMENT REMOVED

In the rest of this subsection we will use superscripts
to indicate the dependence on $\lambda$ of the expected control cost
$\J^{\lambda}(\x, \ba(\cdot))$,
the value function $v^{\lambda}(\x)$ and
the motionless set $\M^{\lambda} = \{\x \mid v^{\lambda}(\x) = q(\x) \}$.
Wherever this superscript is omitted,
the properties hold for any fixed $\lambda > 0$.
We also introduce $v^0$ as the solution of a variational inequality:
\begin{equation}
\label{eq:DP_v0}
\max \left\{
\, v^0(\x)-q(\x), \;
 - \min\limits_{\ba \in A}
\left\{
K(\x, \ba) + \nabla v^0(\x) \cdot \fB ( \x, \ba )
\right\} \,
\right\}
\, = \, 0.
\end{equation}

We see that \eqref{eq:DP_v0} is a variant of \eqref{eq:HJB_opti_stop}
with $\beta = 0$.   Intuitively, $v^0$ can be interpreted as the value function
for the limiting case,
in which the termination is so unlikely that we are guaranteed to reach
any $\x \in \cdomain$ of our choice before it occurs.  Due to \AssumePrime{2},
\AssumePrime{3} we can instead minimize over $\hat{A}$
without changing $v^0$.
%COMMENT REMOVED
%COMMENT REMOVED
%COMMENT REMOVED
%COMMENT REMOVED
%COMMENT REMOVED

Let  $\D_{+}^{\ba} q(\x)$
and
${\mathcal{H}}_{+}^{\ba} q(\x)$
be respectively
the lower Dini and the lower Hadamard derivatives
of $q$ along the vector $\fB ( \x, \ba )$; i.e.,
$$
\D_{+}^{\ba} q(\x) \; = \;
\liminf\limits_{t \to 0^+}
\frac{q \left( \x + t \fB ( \x, \ba ) \right) - q(\x)}{t};
\qquad
{\cal{H}}_{+}^{\ba} q(\x) \; = \;
\liminf\limits_{\begin{array}{c} t \to 0^+\\ \bb \to \fB ( \x, \ba ) \end{array}}
\frac{q \left( \x + t \bb \right) - q(\x)}{t};
$$
if $q$ is differentiable, then
$\D_{+}^{\ba} q(\x) = {\cal{H}}_{+}^{\ba} q(\x) = \nabla q(\x) \cdot \fB ( \x, \ba )$.
We also define two sets
$$
\M^{\infty}
 \, = \,
\left\{\x \mid \inf\limits_{\ba \in \hat{A}}
\left\{
K(\x, \ba) + \D_{+}^{\ba} q(\x)
\right\} \geq 0 \right\}
\quad
\text{ and }
\quad
\M^{\infty}_0
 \, = \,
\left\{\x \mid \min\limits_{\ba \in \hat{A}}
\left\{
K(\x, \ba) + {\cal{H}}_{+}^{\ba} q(\x)
\right\} > 0 \right\}.
$$
The minimum in the above definition of
$\M^{\infty}_0$ is attained since the lower Hadamard derivatives
are lower semi-continuous functions of the direction.
Moreover, it is easy to show that for every $\x \in \M^{\infty}_0$
there exists $\epsilon >0$ and $\tau > 0$ such that
for all $t \in (0, \tau]$ and all $\ba \in \hat{A}$
\begin{equation}
\label{eq:q_around_Minf_0}
K(\x, \ba) +
\frac{q \left( \x + t \fB ( \x, \ba ) \right) - q(\x)}{t}
\geq \epsilon > 0.
\end{equation}
\begin{remark}
\label{rem:inf_motionless_sets}
The set $\domain \backslash \M^{\infty}$ consists of points,
starting from which it is never optimal to stay
in place regardless of how high $\lambda$ is.
However, the points in $\M^{\infty} \backslash \M^{\infty}_0$
may also remain outside of $\M^{\lambda}$ for any finite $\lambda$.
E.g., if $K \equiv 0$ and $q$ is smooth,
then every critical point of $q$ will be in $\M^{\infty}$,
including all strict local maxima of $q$,
starting from which it is clearly better to move regardless of $\lambda$,
and all strict local minima of $q$, which become motionless
when $\lambda$ is sufficiently large.
Below we show that
all point in $\M^{\infty}_0$ also have the latter property,
at least for isotropic cost/dynamics with $\widebar{K}_1 > 0$.
Interestingly, the isotropy and $\widebar{K}_1 > 0$ also imply that
all strict local maxima of a smooth $q$ will also
lie in $\M^{\infty}_0$.  This might seem somewhat counterintuitive,
but there is no contradiction: for large $\lambda$, the process
will likely terminate close to the starting point,
and the net decrease in $q$ will be insufficient
to compensate for the accumulated running cost.
\end{remark}

To simplify the notation, we will refer to a control
$\ba^*(t)$  (and the corresponding trajectory $\y^*(t)$)
as {\em $\lambda$-optimal} if
$\J^{\lambda}(\x, \ba^*(\cdot))=v^{\lambda}(\x).$
\begin{thm}
Bounds and asymptotic behavior of $v^{\lambda}$ and $\M^{\lambda}$
assuming \Bssume{1},\Bssume{2}, and \AssumePrime{1}-\AssumePrime{3}.
Suppose that an e.m. $\lambda$-optimal control exists for every
starting location $\x \in \domain$ and $\forall \lambda \geq 0$.
\begin{enumerate}

\item
If $\y^*(t)$ is an optimal trajectory starting from $\x$ and becoming motionless
at some $\x_0 = \y^*(t_0)$, then
$\omega(t_0) \leq \omega(t)$ and $q(\x_0) \leq q(\y^*(t))$ for all $t \in [0, t_0)$
(and the second inequality is strict if $\widebar{K}_1 > 0$).

\item
$v^{\lambda}(\x) \in [v^0(\x), \, q(\x)]$ for all $\lambda > 0$ and all $\x \in \cdomain$.

\item
$0 \leq \lambda_1 \leq \lambda_2 \qquad \imply \qquad
v^{\lambda_1}(\x) \leq v^{\lambda_2}(\x),
\quad \forall \x \in \cdomain.$

\item
$v^{\lambda} \to q$ pointwise as $\lambda \to +\infty$.

\item
$v^{\lambda} \to v^0$ pointwise as $\lambda \to 0$.

\item
If $\lambda_1 \leq \lambda_2$, then $\M^{\lambda_1} \subset \M^{\lambda_2}. \,$
In particular, $\M^0 \; \subset\;  \M^{\lambda} \; \subset \; \M^{\infty}, \,$
for all $\lambda > 0$.

%COMMENT REMOVED
%COMMENT REMOVED
%COMMENT REMOVED
%COMMENT REMOVED
%COMMENT REMOVED

\item
Suppose the cost and dynamics are isotropic as in equation
\eqref{eq:obstacle_form_iso} and $\widebar{K}_1 > 0$.
%COMMENT REMOVED
Then, for every $\x \in \M^{\infty}_0$, there exists a sufficiently large
$\lambda >0$, such that $\x \in \M^{\lambda}$.

\item
For every $\x \not \in \M^0$, there exists a sufficiently small
$\lambda >0$, such that $\x \not \in \M^{\lambda}$.

\item
We will denote the sets of local minima,
strict local minima and global minima of $q$ in $\cdomain$
as $Q_l$, $Q_l^s$ and $Q_g$ respectively.
Then $Q_g \subset \M^0$, and $Q_l \subset \M^{\infty}$.
Moreover, if \Bssume{3} also holds and
$K \equiv 0,$ then $\M^{\infty}_0 \subset Q_l^s.$
\end{enumerate}
\label{thm:asymptotics_v}
\end{thm}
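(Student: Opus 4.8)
The plan is to mirror the proof of Theorem~\ref{thm:asymptotics_V} for the discrete case, replacing the recursive manipulations of the Bellman equation by arguments along optimal trajectories. The main tools are: the ``terminal-cost'' form \eqref{eq:traj_cost_terminal} of $\J^{\lambda}$ for e.m.\ controls; the general identity $\J^{\lambda}(\x,\ba(\cdot))=\int_0^{\infty}\lambda e^{-\lambda t}\,\omega(t)\,dt$ equivalent to \eqref{eq:randomT_trajCost_original}; the bound $v^{\lambda}\le q$ from Part~1 of Theorem~\ref{thm:v_causality}; and the (in)equality, established inside the proof of Part~5 of Theorem~\ref{thm:v_causality}, that along any trajectory $\y(\cdot)$ generated by a control $\ba(\cdot)$ the function $z(t)=v^{\lambda}(\y(t))$ satisfies $z'(t)\ge\lambda[z(t)-q(\y(t))]-K(\y(t),\ba(t))$ for a.e.\ $t$, with equality when $\ba(\cdot)$ is $\lambda$-optimal (and, for $\lambda=0$, the undiscounted version $z'(t)\ge -K(\y(t),\ba(t))$). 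Granting these, Part~1 is the continuous analogue of Part~1 of Theorem~\ref{thm:asymptotics_V}: if $\omega$ took a value below $\omega(t_0)$ on $[0,t_0)$, truncating the optimal trajectory there and appending $\ba_0$ would strictly lower the cost via \eqref{eq:traj_cost_terminal}, a contradiction; then $q(\x_0)\le q(\y^*(t))$ for $t<t_0$ follows from $\omega(t_0)\le\omega(t)$ together with $K\ge\widebar{K}_1\ge 0$ (strictly if $\widebar{K}_1>0$). For Part~2 the upper bound is $v^{\lambda}\le q$; for the lower bound, \eqref{eq:traj_cost_terminal} exhibits $\J^{\lambda}(\x,\ba^*(\cdot))$, for an e.m.\ $\lambda$-optimal $\ba^*$, as a convex combination of the numbers $\{\omega(t):t\in[0,t_0]\}$, all $\ge\omega(t_0)$ by Part~1, while $\omega(t_0)$ is the cost of an admissible control/stopping-time pair (the restriction of $\ba^*$ to $[0,t_0]$ and stopping time $t_0$) for the undiscounted optimal-stopping problem \eqref{eq:DP_v0}, hence $\ge v^0(\x)$.

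Parts~4,~5,~8,~9 and the first inclusion of Part~6 are short. For Part~4, $\J^{\lambda}(\x,\ba(\cdot))\ge\int_0^{\infty}\lambda e^{-\lambda t}q(\y(t))\,dt$; all admissible trajectories are Lipschitz (since $\fB$, being Lipschitz in $\x$ and continuous in $\ba$ on the compact $\cdomain\times A$, is bounded), so $\y(t)$ stays in any prescribed neighbourhood of $\x$ for small $t$, and lower semicontinuity of $q$ gives $\liminf_{\lambda\to\infty}v^{\lambda}(\x)\ge q(\x)-\varepsilon$ for every $\varepsilon>0$; with $v^{\lambda}\le q$ this gives $v^{\lambda}\to q$. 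For Part~5, take an e.m.\ $0$-optimal control from $\x$ (which exists by hypothesis); by \eqref{eq:traj_cost_terminal} its $\lambda$-cost tends to $\omega(t_0)=v^0(\x)$ as $\lambda\to 0$, so $\limsup_{\lambda\to 0}v^{\lambda}(\x)\le v^0(\x)$, and Part~2 gives the reverse. The first inclusion of Part~6 is immediate from Part~3: $\x\in\M^{\lambda_1}$ gives $q(\x)=v^{\lambda_1}(\x)\le v^{\lambda_2}(\x)\le q(\x)$. Part~8 follows from Part~5: $\x\notin\M^0$ means $v^0(\x)<q(\x)$, hence $v^{\lambda}(\x)<q(\x)$ for small $\lambda$. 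In Part~9: at a global minimizer $\x$ every admissible control/stopping-time pair has cost $\ge q(\y(T))\ge q_1=q(\x)$, attained by $\ba_0$, so $\x\in\M^0$; at a local minimizer $\x$ one has $q(\x+t\fB(\x,\ba))\ge q(\x)$ for small $t$, so $\D_{+}^{\ba}q(\x)\ge 0$ and $K(\x,\ba)+\D_{+}^{\ba}q(\x)\ge 0$, i.e.\ $\x\in\M^{\infty}$; and under \Bssume{3} with $K\equiv 0$, $\x\in\M^{\infty}_0$ forces the lower Hadamard derivative of $q$ to be positive in every direction (every direction being realizable under \Bssume{3}), so a sequence $\y_n\to\x$, $\y_n\ne\x$, with $q(\y_n)\le q(\x)$ would give, along a convergent subsequence of the directions $(\y_n-\x)/|\y_n-\x|$, a limiting difference quotient $\le 0$, a contradiction; hence $\x\in Q_l^s$.

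The workhorse is Part~3. Fix $0\le\lambda_1\le\lambda_2$ and $\x$; the case $\lambda_1=\lambda_2$ is trivial, so assume $\lambda_1<\lambda_2$, and let $\ba^*(\cdot)$ be an e.m.\ $\lambda_2$-optimal control from $\x$, becoming motionless at $\x_0=\y^*(t_0)$. If $t_0=0$ then $\x\in\M^{\lambda_2}$ and $v^{\lambda_2}(\x)=q(\x)\ge v^{\lambda_1}(\x)$. If $t_0>0$, set $z_i(t)=v^{\lambda_i}(\y^*(t))$ on $[0,t_0]$; these are absolutely continuous (as in the proof of Part~5 of Theorem~\ref{thm:v_causality}). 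Using the two (in)equalities recalled above and $z_1(t)=v^{\lambda_1}(\y^*(t))\le q(\y^*(t))$, subtract to get, for $w=z_2-z_1$,
$$w'(t)\;\le\;\lambda_2\,w(t)+(\lambda_2-\lambda_1)\bigl(z_1(t)-q(\y^*(t))\bigr)\;\le\;\lambda_2\,w(t)\qquad\text{for a.e. }t\in[0,t_0],$$
so $e^{-\lambda_2 t}w(t)$ is non-increasing. The tail of $\ba^*$ from $\x_0$ is $\lambda_2$-optimal and motionless, so $\x_0\in\M^{\lambda_2}$ and $w(t_0)=q(\x_0)-v^{\lambda_1}(\x_0)\ge 0$; hence $w(0)\ge e^{-\lambda_2 t_0}w(t_0)\ge 0$, i.e.\ $v^{\lambda_1}(\x)\le v^{\lambda_2}(\x)$. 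The same computation with $\lambda_1=0$ (using the undiscounted derivative inequality) gives $\M^0\subset\M^{\lambda}$. The only care needed is keeping the Gronwall bookkeeping valid for absolutely continuous rather than $C^1$ functions, which is standard; a single-control cost comparison would not suffice here, since $\int_0^{t_0}\lambda e^{-\lambda t}(\omega(t)-\omega(t_0))\,dt$ is not monotone in $\lambda$ in general. The second inclusion of Part~6, $\M^{\lambda}\subset\M^{\infty}$, is separate: if $\x\in\M^{\lambda}$ then $\ba_0$ is optimal, so for each $\ba\in\hat{A}$ the control ``$\ba$ for time $t$, then $\ba_0$'' has $\lambda$-cost $\ge v^{\lambda}(\x)=q(\x)$ for all small $t\ge 0$; by \eqref{eq:traj_cost_terminal} the right derivative of this cost at $t=0$ equals $\liminf_{t\to 0^+}\bigl(t^{-1}\!\int_0^t K(\y(s),\ba)\,ds+t^{-1}(q(\y(t))-q(\x))\bigr)$, which must be $\ge 0$; lower semicontinuity of $K$ (so $\liminf_{t\to0^+}t^{-1}\!\int_0^t K(\y(s),\ba)\,ds\ge K(\x,\ba)$) together with the identification of $t^{-1}(q(\y(t))-q(\x))$ with a difference quotient of $q$ along directions converging to $\fB(\x,\ba)$ then yield $K(\x,\ba)+\D_{+}^{\ba}q(\x)\ge 0$, i.e.\ $\x\in\M^{\infty}$.

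The main obstacle is Part~7: given $\x\in\M^{\infty}_0$ in the isotropic setting \eqref{eq:obstacle_form_iso} with $\widebar{K}_1>0$, produce a finite $\lambda$ with $\x\in\M^{\lambda}$. Here \eqref{eq:q_around_Minf_0} supplies $\varepsilon,\tau>0$ with $K(\x,\ba)+t^{-1}\bigl(q(\x+t\fB(\x,\ba))-q(\x)\bigr)\ge\varepsilon$ for every $\ba\in\hat{A}$ and $t\in(0,\tau]$. The plan is to show there are fixed $\tau'>0$ and $\varepsilon'>0$ such that every e.m.\ $\lambda$-optimal control from $\x$ satisfies $\omega(t)\ge q(\x)+\varepsilon' t$ as long as $\y^*(t)$ stays in the $\tau f(\x)$-ball about $\x$: writing $\y^*(t)=\x+s_t\fB(\x,\bn_t)$ with $\bn_t=(\y^*(t)-\x)/|\y^*(t)-\x|\in S_{n-1}$ and $s_t=|\y^*(t)-\x|/f(\x)$, apply \eqref{eq:q_around_Minf_0} along $\bn_t$ and compare the running cost $\int_0^t K(\y^*(s),\ba^*(s))\,ds$ actually incurred with the straight-ray value $s_t\,K(\x,\bn_t)$, using $\widebar{K}_1>0$, the isotropic speed bounds $0<f_{\min}\le f\le f_{\max}$, and the fact that the trajectory spends at least $|\y^*(t)-\x|/f_{\max}$ units of time in motion. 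Once such a bound on $\omega$ holds, \eqref{eq:traj_cost_terminal} gives $\J^{\lambda}(\x,\ba^*(\cdot))\ge q(\x)+\varepsilon'(1-e^{-\lambda\tau'})/\lambda>q(\x)$ whenever $t_0\le\tau'$, while for trajectories that leave the ball one bounds the accumulated running cost from below through $\widebar{K}_1>0$ and the distance traveled; in either case, taking $\lambda$ large forces $v^{\lambda}(\x)\ge q(\x)$, hence $v^{\lambda}(\x)=q(\x)$. I expect the hardest part to be making this ``straight-ray'' comparison robust for arbitrary measurable controls (the trajectory need not recede monotonically from $\x$, so running cost and displacement must be disentangled carefully) and carrying the mere lower semicontinuity of $K$ and $q$ through the local estimates; these are precisely where the continuous case is genuinely harder than its discrete counterpart.
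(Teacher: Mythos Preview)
Your proposal tracks the paper's proof closely for Parts~1--6, 8, and 9. The Gronwall argument in Part~3 is a harmless variant (you obtain $w'\le\lambda_2 w$, the paper obtains $\theta'\le\lambda_1\theta$; both yield the conclusion from $w(t_0)\ge 0$). You also supply an argument for the inclusion $\M^{\lambda}\subset\M^{\infty}$ in Part~6, which the paper's proof does not spell out.

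The one place where the paper is substantially cleaner than your sketch is Part~7. Your plan compares the actual running cost along $\y^*$ with a ``straight-ray'' surrogate $s_t\,K(\x,\bn_t)$ and controls time-in-motion via $\widebar{K}_1>0$ and speed bounds; as you anticipate, this is delicate because the trajectory need not recede monotonically from $\x$. The paper avoids this entirely. In the isotropic case it first restates the $\M^{\infty}_0$ condition via the \emph{minimum travel time} $t_{\blds{y}}$ from $\x$ to $\y$: there exist $\delta,\epsilon>0$ with $K(\x)+\dfrac{q(\y)-q(\x)}{t_{\blds{y}}}\ge\epsilon$ whenever $|\y-\x|\le\delta$. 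Taking $\epsilon<\widebar{K}_1$ and $\tau$ small enough that $|\y(s)-\x|\le\delta$ and $K(\y(s))\ge K(\x)-\epsilon/2$ for $s\le\tau$ (lower semicontinuity of $K$), the single crude inequality $t_{\blds{y}(s)}\le s$ gives
\[
q(\y(s))\;\ge\;q(\x)+t_{\blds{y}(s)}\bigl(\epsilon-K(\x)\bigr)\;\ge\;q(\x)+s\bigl(\epsilon-K(\x)\bigr),
\]
the second step using $\epsilon-K(\x)<0$. Adding $\int_0^s K(\y(r))\,dr\ge(K(\x)-\epsilon/2)\,s$ yields $\omega(s)\ge q(\x)+\tfrac{\epsilon}{2}s$ for \emph{every} control, and then $\J^{\lambda}\ge\int_0^{\tau}\lambda e^{-\lambda s}\omega(s)\,ds$ finishes as in your outline. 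The point is that the minimum-time inequality $t_{\blds{y}(s)}\le s$ absorbs all the geometry, so no straight-ray comparison and no disentangling of running cost from displacement is needed.
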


\begin{proof}
\begin{enumerate}
\item
Suppose $t_1 = \argmin_{t \in [0, t_0]} \omega(t)$ and
$\x_1 = \y^*(t_1)$.  If $\omega(t_0) > \omega(t_1)$,
then formula \eqref{eq:randomT_trajCost_original}
shows that the same trajectory but made motionless earlier (at $\x_1$)
would have a lower cost, contradicting the optimality of $\y^*(\cdot)$.
Thus, $\omega(t_0) \leq \omega(t)$ for all $t \in [0, t_0)$.
As a result,
$$
\omega(t) = \int_0^{t} K (\y^*(s), \ba^*(s)) \, ds \, + \, q(\y^*(t))
\geq \omega(t_0) \geq \int_0^{t} K (\y^*(s), \ba^*(s)) \, ds \, + \, q(\x_0),
$$
where the last inequality uses \AssumePrime{3} and becomes strict
if $\widebar{K}_1 > 0$.  Hence, $q(\x_0) \leq q(\y^*(t))$.\\
We emphasize that the above argument {\em does not} imply that
$\omega(t)$ is monotone decreasing along optimal trajectories.
In fact, for $K=0$, it is easy to see that any optimal trajectory
starting from a local minimum of $q$ (but outside $\M$)
would provide a counter-example.
\item
Suppose that $\ba^*(t)$ is a $\lambda$-optimal control starting from $\x$ and
becoming motionless at the time $t_0$.
From the formula \eqref{eq:traj_cost_terminal} and using Part 1 of the current Theorem,
$
v^{\lambda}(\x) = \J^{\lambda} (\x, \ba^*(\cdot))
= \int_0^{t_0} \lambda e^{-\lambda t} \omega(t) \, dt \; + \; e^{-\lambda t_0} \omega(t_0)
\geq
\omega(t_0) \geq v^{0}(\x),$
where the last inequality reflects the interpretation of $v^{0}$ as the value function
of the deterministic optimal stopping problem.
\item
%COMMENT REMOVED
Suppose that $\ba(t)$ is an e.m. $\lambda_2$-optimal control
and $\y(t)$ is the corresponding trajectory starting
from $\x$.
Define $z_2(t) = v^{\lambda_2}(\y(t))$ and $z_1(t) = v^{\lambda_1}(\y(t))$.
Since this trajectory is $\lambda_2$-optimal,
part 5 of theorem \ref{thm:v_causality}
shows that, for almost all $t$,
$$
z_2'(t) \; = \; \lambda_2
\left[ z_2(t) - q(\y(t)) \right] - K (\y(t), \ba(t))
\; \leq \; \lambda_1
\left[ z_2(t) - q(\y(t)) \right] - K (\y(t), \ba(t)),
$$
where the inequality uses the fact that $z_2(t) \leq q(\y(t)).$
Since the same trajectory is not necessarily $\lambda_1$-optimal,
we have
$$
z_1'(t) \; \geq \; \lambda_1
\left[ z_1(t) - q(\y(t)) \right] - K (\y(t), \ba(t)).
$$
Subtracting these inequalities, and defining $\theta(t) = z_2(t) - z_1(t)$,
we see that
$\theta(t)' \leq \lambda_1 \theta(t)$.
On the other hand, when
the trajectory becomes motionless at the time $t_0 >0$, we know that
$v^{\lambda_2}(\y(t_0)) = q(\y(t_0)) \geq v^{\lambda_1}(\y(t_0))$;
i.e., $\theta(t_0) \geq 0$.  Taken together with the above differential inequality
this implies $\theta(0) \geq 0$; i.e.,  $v^{\lambda_2}(\x) \geq v^{\lambda_1}(\x)$.
%COMMENT REMOVED
%COMMENT REMOVED
\item
Note that $\omega(t)$ is lower semicontinuous for every control $\ba(\cdot)$.
Since $q$ is lower semicontinuous, for every $\x \in \domain$ and every $\epsilon>0$
there exists $\delta$ such that $|\x-\xBtilde|<\delta \imply q(\xBtilde) > q(\x) - \epsilon.$
Choosing $\tau < \delta / \|\fB\|_{\infty}$, we can bound from below the cost of
every control
$$
\J^{\lambda}(\x, \ba(\cdot)) \geq \int_0^\tau \omega(s) \lambda e^{ -\lambda s} \, ds
\geq \int_0^\tau q(\y(s)) \lambda e^{ -\lambda s} \, ds
\geq \left( q(\x) - \epsilon \right) \left( 1-e^{-\lambda \tau} \right).
$$
Thus, $q(\x) \geq v^{\lambda}(\x) \geq \left( q(\x) - 2 \epsilon \right)$
for all sufficiently large $\lambda$'s.  Since this argument works for any
$\epsilon>0$, $v^{\lambda}(\x) \to q(\x)$
as $\lambda \to +\infty$.

%COMMENT REMOVED
%COMMENT REMOVED
%COMMENT REMOVED
%COMMENT REMOVED
%COMMENT REMOVED
%COMMENT REMOVED
%COMMENT REMOVED
%COMMENT REMOVED
%COMMENT REMOVED
%COMMENT REMOVED
%COMMENT REMOVED
\item
Suppose a $0$-optimal control $\ba(\cdot)$ becomes motionless at some time $T$;
i.e., $v^0(\x) = J^{0}(\x, \ba(\cdot)) =
\int_0^T K (\y(s), \ba(s)) \, ds \, + \, q(\y(T)).$
Then, from formula \eqref{eq:traj_cost_terminal},
$\lim\limits_{\lambda \to 0} \J^{\lambda}(\x, \ba(\cdot)) = \omega(T) = v^0(\x).$
\item
If $\x \in \M^{\lambda_1}$, then $q(\x) = v^{\lambda_1}(\x) \leq v^{\lambda_2}(\x) \leq q(\x)$.
Thus, $\x \in \M^{\lambda_2}.$
%COMMENT REMOVED
\item
Suppose $\x \in \M^{\infty}_0$ is fixed.
To prove that $\x$ becomes motionless for some finite $\lambda$,
it is not enough to show that for every fixed control $\ba(\cdot)$
we can choose large enough $\lambda$ to guarantee $\J^{\lambda}(\x, \ba(\cdot)) \geq q(\x).$
We need to show that some finite $\lambda$ guarantees this inequality for {\bf all} $\ba(\cdot)$.
%COMMENT REMOVED

For isotropic dynamics and cost, the fact that $\x \in \M^{\infty}_0$ guarantees that
$\exists \delta > 0$ such that $|\y-\x| \leq \delta$ implies
$$K(\x) + \frac{q(\y)-q(\x)}{t_{\blds{y}}} \geq \epsilon > 0,$$
where $t_{\blds{y}}$ denotes the minimum time needed to reach from $\x$ to $\y$.
%COMMENT REMOVED
Without loss of generality, we can assume that $\epsilon < \widebar{K}_1$.
Let $\ba(\cdot)$ be an arbitrary control with the corresponding trajectory
$\y(t)$ starting from $\x$.  We also choose a small enough
$\tau$ to ensure that
$|\y(t)-\x| \leq \delta$
and $K(\y(t)) \geq  K(\x) - \epsilon/2$
for all $t \leq \tau$.  Then
$$
\J^{\lambda}(\x, \ba(\cdot)) \geq \int_0^\tau \omega(s) \lambda e^{ -\lambda s} \, ds
\geq
\int_0^\tau \left[ \left( K(\x) - \frac{\epsilon}{2} \right)s \, + \, q(\y(s)) \right] \lambda e^{ -\lambda s} \, ds.
$$
Using the fact that $t_{\mbox{\scriptsize {\y}(s)}} \leq s$, we note that
$$
q(\y(s)) \; \geq  \; q(\x) + t_{\mbox{\scriptsize {\y}(s)}} \left( \epsilon - K(\x) \right)
\; \geq \; q(\x) + s \left( \epsilon - K(\x) \right).
$$
Combining this with the above,
\begin{eqnarray*}
\J^{\lambda}(\x, \ba(\cdot)) \geq
\int_0^\tau \left[ \frac{s \epsilon}{2}  \, + \, q(\x) \right] \lambda e^{ -\lambda s} \, ds
& = &
(1-e^{-\lambda \tau}) q(\x) + \frac{\epsilon}{2 \lambda}
\left[1 - e^{-\lambda \tau} - \lambda \tau e^{-\lambda \tau} \right]\\
& = &
q(x) + \frac{\epsilon}{2 \lambda}
\left[1 - e^{-\lambda \tau} - \lambda \tau e^{-\lambda \tau}
- 2 \lambda \epsilon^{-1} q(\x) e^{-\lambda \tau}
\right].
\end{eqnarray*}
To complete the proof, we note that, for large enough $\lambda$,
the expression in the last square brackets is strictly positive,
and this inequality holds for all controls $\ba(\cdot)$.
%COMMENT REMOVED
%COMMENT REMOVED
%COMMENT REMOVED

\item
Follows from part 5.
\item
Suppose $\x \in Q_g$ and $\ba(\cdot)$ is its $0$-optimal control
that becomes motionless at some time $T$.  Then
$v^0(\x) = \omega(T) \geq q(\y(T)) \geq q(\x)$; hence, $\x \in \M^0$.
If $\x \in Q_l$, then $\D_{+}^{\ba} q(\x) \geq 0, \forall \ba \in A$
and $\x \in \M^{\infty}.$
Finally, if $K \equiv 0$, then \Bssume{3} implies that all
lower Hadamard derivatives of $q$ are positive at every
$\x \in \M^{\infty}_0$, making $x$ a strict minimum.
\end{enumerate}
\end{proof}

\noindent
We note that the conditions in Theorem \ref{thm:asymptotics_v} can be further relaxed
by rewriting the proofs in terms of $\epsilon$-suboptimal e.m. trajectories
(see Part 3 of Theorem \ref{thm:v_causality}).

\subsection{Upwind discretization and a modified Fast Marching Method.}
\label{ss:FMM_modified}

We consider a first-order upwind finite differences discretization
of the isotropic variational inequality \eqref{eq:Eikonal_var_ineq}
and introduce a modified version of Fast Marching Method applicable to it.
We note that Ordered Upwind Methods \cite{SethVlad2, SethVlad3, AltonMitchell2}
can be
similarly modified to handle anisotropic randomly-terminated problems.

We assume that \eqref{eq:Eikonal_var_ineq} is discretized
on a uniform Cartesian grid\footnote{
For the sake of notational simplicity, we describe the numerical method in
$R^2$; the generalization for $n>2$ is straightforward.}
with $M$ gridpoints and the value function $v(x,y)$ is approximated by a grid-function $V$:
$$
\x_{i,j} = (x_i, y_j); \qquad
x_{i\pm1}  = x_i \pm h; \qquad
y_{j\pm1}  = y_j \pm h; \qquad
v(x_i, y_j) \approx V(x_i, y_j) = V_{i,j}.
$$
We also define the set of neighboring gridpoints and the set of neighboring values
$$
N(\x_{i,j})=N_{i,j}=\{ \x_{i+1,j}, \x_{i,j+1}, \x_{i-1,j}, \x_{i,j-1}\}; \qquad
NV_{i,j}=\{ V_{i+1,j}, V_{i,j+1}, V_{i-1,j}, V_{i,j-1}\}.
$$

Our discretization uses two one-sided, first-order accurate
approximations for each partial derivative; i.e.,
$$
v_x(x_i, y_j) \approx D^{\pm x}_{ij} V =
\frac{ V_{i \pm 1, j} - V_{i,j} }{ \pm h}; \qquad
v_y(x_i, y_j) \approx D^{\pm y}_{ij} V =
\frac{ V_{i, j \pm 1} - V_{i,j} }{ \pm h}.
$$
An {\em upwind discretization} of \eqref{eq:Eikonal_var_ineq}
at a gridpoint $(x_i, y_j)$ is obtained as follows:
\begin{equation}
 V_{i,j} \; = \;  q_{i,j} + \frac{1}{\lambda} \left[
K_{i,j}
 - f_{i,j} \sqrt{
\left( \max \left( D^{-x}_{ij}V, \, -D^{+x}_{ij}V, \, 0 \right)\right)^{2}
\; + \;
\left( \max \left( D^{-y}_{ij}V, \, -D^{+y}_{ij}V, \, 0 \right)\right)^{2}
}
\right]^{-}.
\label{eq:DP_cUHP}
\end{equation}
If all the $NV_{i,j}$ values are already known, \eqref{eq:DP_cUHP}
has to be solved to obtain $V_{i,j}$.  The latter task is significantly
simplified once we realize that the equation can be solved on a
quadrant-by-quadrant basis.  This procedure is described in detail in
Appendix B (section \ref{s:Appendix_B}) and there we also prove that the
finite-difference discretization \eqref{eq:DP_cUHP} can be
obtained from the Kuhn-Tucker optimality conditions for
a suitable semi-Lagrangian discretization.

However, the values in $NV_{i,j}$ are not a priori known
and equation \eqref{eq:DP_cUHP} has to hold at every gridpoint,
resulting in a system of $M$ coupled non-linear equations.
This system can be solved iteratively, but that approach is
unnecessarily inefficient.  Indeed, \eqref{eq:DP_cUHP}
is related to the upwind scheme used by Rouy and Tourin \cite{RouyTour}
for the Eikonal equation, which Sethian later showed to possess causal properties,
yielding the non-iterative Fast Marching Method \cite{SethFastMarcLeveSet}.
Below we provide an extension of that method to our obstacle problem.

\begin{remark}
\label{rem:discretization_properties}
If $V$ satisfies equation \eqref{eq:DP_cUHP}, it is easy to show that\\
1. $V_{i,j} \leq q_{i,j};$ i.e., the discretized version is also an obstacle problem.\\
2. $V_{i,j}$ is a non-decreasing function of all values in
$NV_{i,j}.$
(This monotonicity, along with the consistency of the discretization
can be used to show the convergence of $V$ to $v$ as $h \to 0$;
see \cite{BarlesSouganidis, ObermanSINUM}.)\\
3. Suppose $\hat{V} \in NV_{i,j}$.
Then, either $\hat{V} < V_{i,j}$ or any increase in $\hat{V}$ will not affect
$V_{i,j}$.
In other words, the value of $V_{i,j}$ depends only on its smaller neighbors,
which makes the label-setting method summarized in Algorithm
\ref{alg:FMM_modified} applicable.\\
%COMMENT REMOVED
\end{remark}

As with Dijkstra's method, the values $V$ at all gridpoints are at first temporary,
equation \eqref{eq:DP_cUHP} is used to update these temporary values,
and the values become permanent when the corresponding gridpoints are $Accepted$.
Due to causality of this discretization,
by the time the algorithm terminates, \eqref{eq:DP_cUHP} holds at all gridpoints.
As in the original Fast Marching Method for the Eikonal equation,
the computational cost of our algorithm is $O(M \log M)$, where the $\log M$
term stems from implementing a sorted list of $Considered$ gridpoints
using a heap-sort data structure.

%COMMENT REMOVED

\begin{algorithm}[hhhh]
%COMMENT REMOVED
\caption{A modified Fast Marching Method for randomly-terminated isotropic problems.}
\label{alg:FMM_modified}
\noindent
\tt
\hspace*{15mm} start with all gridpoints marked as $Far$;\\
\hspace*{15mm} set $V(\x) \; := \; q(\x)$ for all $\x \in X$;\\
\hspace*{15mm} mark all local minima of $q$ as $Considered$;

\noindent
\hspace*{15mm} while ($Considered$ list is not empty) \; \{\\
\hspace*{20mm} let $\xBar$ be such that $V(\xBar)$ is the smallest $Considered$ value;\\
\hspace*{20mm} mark $\xBar$ $Accepted$;\\
\hspace*{20mm} for each not-yet-$Accepted$ $\x_{i,j} \in N(\xBar)$ \; \{\\
\hspace*{25mm} update $V(\x_{i,j})$;\\
%COMMENT REMOVED
%COMMENT REMOVED
%COMMENT REMOVED
\hspace*{25mm} if $\x_{i,j}$ is $Far$, mark it $Considered$;\\
\hspace*{20mm} \}\\
\hspace*{15mm} \}
\end{algorithm}

A simple implementation of ``update $V(\x_{i,j})$'' is obtained by
%COMMENT REMOVED
re-solving \eqref{eq:DP_cUHP} using all current (possibly temporary) values
in $NV_{i,j}$.  A more efficient version, using the $Accepted$ subset of $NV_{i,j}$
and taking advantage of
the fact that only $V(\xBar)$ has recently changed, is described in
Remark \ref{rem:good_update} in Appendix B.

\section{Numerical examples}
\label{s:examples}

\subsection{Convergence study: a trivial free boundary.}
\label{ss:convergence1}
We start by studying convergence of our method on a simple
randomly-terminated continuous
example, where the free boundary is trivial and the analytic formula
for the solution is available.

Suppose $\cdomain = [-2,2]\times[-2,2]$, $q(\x) = |\x|$,
$K=0$, and $f=1$.  Note that $K=0$ and the fact that $q$ has only
one minimum imply that the free boundary is trivial; i.e.,
$\M = \B = \{ \bm{0} \}$, the global minimum of $q$.

Since $f$, $K$, and $q$ are radially symmetric, then so is the value function:
for every $\x \neq \bm{0}$ it is optimal to move along the straight line
toward the origin; i.e., the optimal control is $\ba^*(t) = -\x/|\x|$.
The expected cost of using this control (with the specified $q$ and $f$ and
with any radially symmetric $K$) is
\begin{equation}
\J(\x, \ba^*(\cdot))  \; = \; \int_0^{|\x|}
e^{-\lambda s} \left[
K \left(\x \,\frac{|\x| - s}{|\x|} \right)
+ \lambda \left( |\x| - s \right) \right] \, ds.
\label{eq:traj_to_center}
\end{equation}
In particular, when $K=0$,
$$
v(\x) = \J(\x, \ba^*(\cdot)) = \int_0^{|\x|}
\lambda e^{-\lambda s} \left( |\x| - s \right)  \, ds \; = \;
|\x| - \frac{1}{\lambda} \left(1 - e^{-\lambda |\x|} \right).
$$

Table \ref{t:err_trivial} lists the numerical errors observed in this computational example
for $\lambda = 0.5$.  The second column reports the maximum error
observed on the horizontal gridline passing through the origin.
(Since the characteristics are straight lines in this example,
this is equivalent to conducting the same experiment in 1D on
the domain $[-2,2]$.)  The remaining two columns report $L_2$
and $L_{\infty}$ errors computed on the entire $\cdomain$;
the $L_2$ errors are normalized to account for the non-unit area
of $\cdomain$.  The data clearly indicates the first order of convergence.

\begin{table}[hhhh]
\center{
\begin{tabular}{|l|c|c|c|} \hline
%COMMENT REMOVED
%COMMENT REMOVED
grid points & 1D $L_\infty$ error & 2D $L_2$ error & 2D $L_\infty$ error\\
\hline
101x101		& 0.0073	& 0.0062	& 0.0449\\
201x201		& 0.0037	& 0.0035	& 0.0259\\
401x401		& 0.0018	& 0.0020	& 0.0147\\
801x801		& 0.0009	& 0.0011	& 0.0083\\
1601x1601	& 0.0005	& 0.0006	& 0.0046\\
\hline
\end{tabular}
}
\caption{Errors for the trivial free boundary example with $\lambda = 0.5$.}
\label{t:err_trivial}
\end{table}

\begin{remark}
We note that for an Eikonal equation with a point source,
the rate of convergence is often found to be lower
due to a local non-smoothness of the viscosity solution at that point source.
The techniques to recover the first order of convergence include
pre-initializing the true solution in a disk of fixed radius centered at
the point source as well as the more recently proposed ``singularity removal''
method \cite{FomelLuoZhao}.
In the example considered above,
we do not face similar issues simply because the leading
term in the expansion of $v$ is $|\x|^2$ rather than $|\x|$.
\end{remark}

\subsection{Convergence study: a circular free boundary.}
\label{ss:convergence2}

A small modification of the previous example already leads to
a non-trivial free boundary.  We let $K(\x) = |\x|$ and take
the same $f$, $q$ and $\cdomain$ as above.

As before, $v(\x)$ is radially symmetric, and if $\x \not \in \M$,
then the optimal trajectory starting from $\x$ is again $\ba^*(t)$,
whose expected cost is evaluated from the integral in \eqref{eq:traj_to_center}:
$$
\J(\x, \ba^*(\cdot))  \; = \;
\frac{\lambda + 1}{\lambda}
\left( |\x| - \frac{1}{\lambda} \left(1 - e^{-\lambda |\x|} \right) \right).
$$
The value function is
$v(\x)  = \min \left( q(\x), \J(\x, \ba^*(\cdot)) \right)$, and, unlike in the previous example,
$q(\x) = |\x|$ is actually smaller when $|\x|$ is sufficiently large.
Thus, for this example, $\B = \{ \bm{0} \} \cup \C(r)$, where
$\C(r) = \{ \x  \mid |\x| = r \}$ and the radius $r$ is such that
\begin{equation}
\frac{\lambda + 1}{\lambda}
\left( r - \frac{1}{\lambda} \left(1 - e^{-\lambda r} \right) \right) =  r.
\label{eq:FBradius}
\end{equation}
The origin is the inflow part and $\C(r)$ is the outflow part of $\B$.

%COMMENT REMOVED
%COMMENT REMOVED
This example also illustrates the asymptotic behavior of $\B$.
Applying definitions of asymptotic motionless sets from section \ref{ss:contin_value_func},
$$
\M^0 = \left\{(0,0)\right\} \bigcup \left\{ \x \, \mid \; q(\x)  \leq
\int_0^{|\x|} K \left(\x \,\frac{|\x| - s}{|\x|} \right) \, ds \right\}
\qquad \text{and} \qquad
\M^{\infty} = \left\{(0,0)\right\} \bigcup \left\{ \x \, \mid \; K(\x) - |\nabla q(\x)| \geq 0 \right\}.
$$

When $\lambda \to \infty$, Theorem \ref{thm:asymptotics_v} shows
that $\B_{out} \to \B_{\infty} = \{ \x  \mid |\x| = K(\x) = |\nabla q(\x)| = 1 \} = \C(1)$.
On the other hand, when $\lambda \to 0$, we have
$\B_{out} \to \B_0 = \{ \x  \mid |\x| = q(\x) = \int_0^{|\x|} (|\x| -s) \, ds  \} = \C(2)$.
This is also confirmed in Figure \ref{fig:FBradius}
obtained by solving \eqref{eq:FBradius} numerically.
Table \ref{t:err_trivial} lists the numerical errors observed in this example
for two different values of $\lambda$.
\begin{figure}[hhhh]
\center{
\includegraphics[width=15cm]{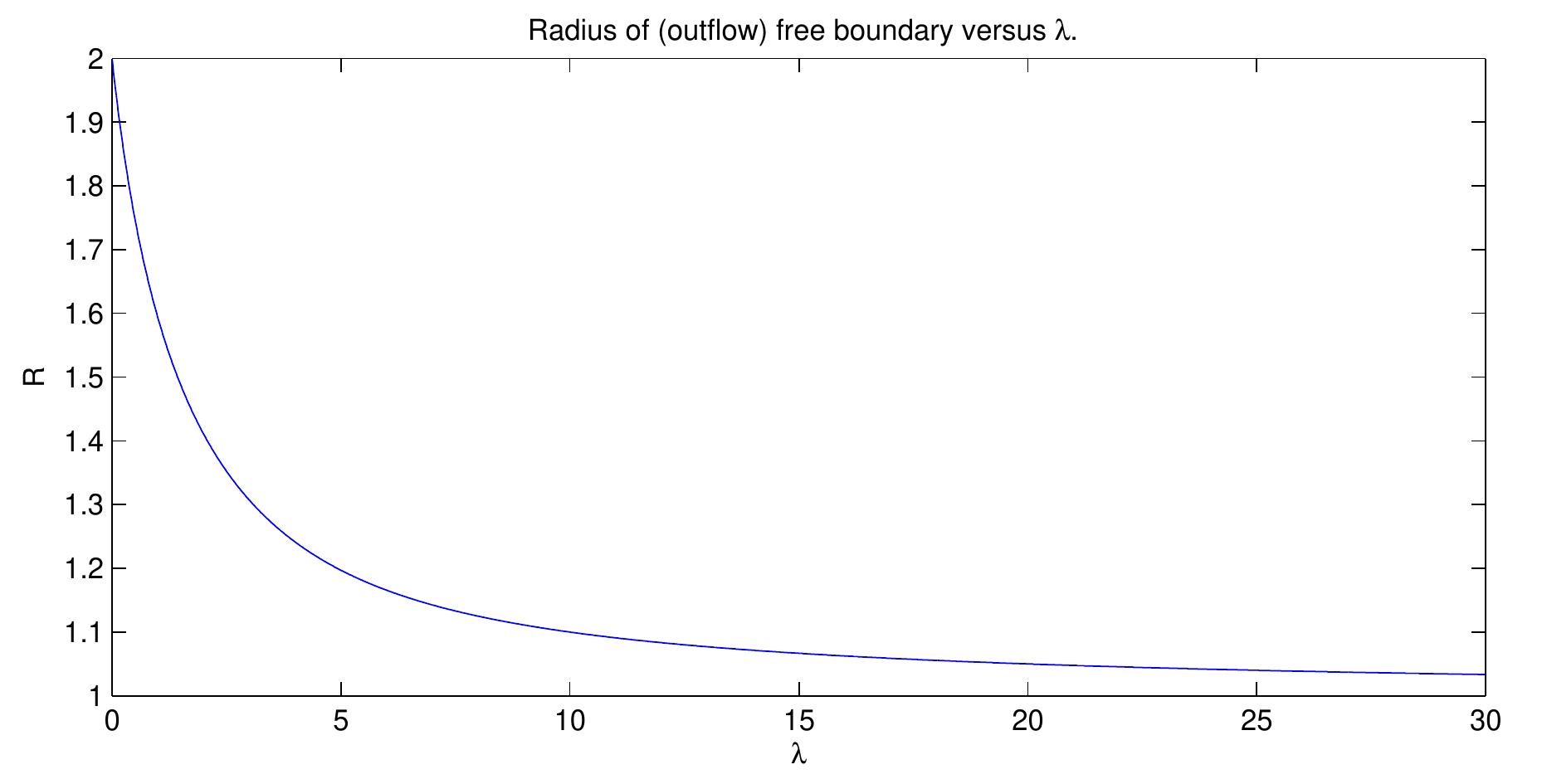}
}
\caption{Radius of the outflow free boundary for different values of $\lambda$.}
\label{fig:FBradius}
\end{figure}

\begin{table}[hhhh]
\center{
\begin{tabular}{|l||c|c|c||c|c|c|} \hline
%COMMENT REMOVED
%COMMENT REMOVED
& \multicolumn{3}{|c||}{$\lambda = 0.5$} & \multicolumn{3}{|c|}{$\lambda = 25$}\\
\hline
grid points & 1D $L_\infty$ error & 2D $L_2$ error & 2D $L_\infty$ error & 1D $L_\infty$ error & 2D $L_2$ error & 2D $L_\infty$ error\\
\hline
101x101		& 0.0216	& 0.0048	& 0.0344	& 0.0055	& 0.00015	& 0.0092\\
201x201		& 0.0109	& 0.0025	& 0.0173	& 0.0032	& 0.00008	& 0.0053\\
401x401		& 0.0055	& 0.0012	& 0.0087	& 0.0017	& 0.00004	& 0.0029\\
801x801		& 0.0027	& 0.0006	& 0.0044	& 0.0009	& 0.00002	& 0.0015\\
1601x1601	& 0.0014	& 0.0003	& 0.0022	& 0.0005	& 0.00001	& 0.0008\\
\hline
\end{tabular}
}
\caption{Errors for the circular (outflow) free boundary example.}
\label{t:err_circular}
\end{table}

\subsection{Optimal idle-time motion in continuous space}
\label{ss:contin_idle}
We now consider a continuous equivalent of the optimal idle-time
processing example of section \ref{ss:optimal_idler}.
An all-terrain vehicle moving in $\cdomain \subset R^2$
is tasked with responding to emergency calls.
We assume that the arrival of calls is a Poisson process
with rate $\lambda$,
and our goal is to minimize the expected response time
to the first caller.
We are given a list of possible caller locations
$\xBtilde_1, \ldots, \xBtilde_r$ inside $\domain$
and the corresponding probabilities
$\tilde{P}_1, \ldots, \tilde{P}_r$ of the next call originating
at each of these locations.  The vehicle's dynamics is assumed to
be isotropic; i.e.,
$\y' = f(\y) \ba$, where $\ba \in S_1$ is the current direction of motion.
We then use the Fast Marching Method to solve $r$ Eikonal problems:
$$
| \nabla u_i (\x) | f(\x) = 1, \quad \x \in \domain \backslash \{ \xBtilde_i \};
\qquad
u_i(\xBtilde_i) = 0, \quad \text{ and $u = +\infty$ on } \boundary;
\qquad i=1, \ldots,r.
$$
The resulting $u_i(\x)$ is the minimum time to reach $\xBtilde_i$
for a vehicle starting from $\x$ and constrained to move within $\domain$.
If the call is received at $\x$, the expected time from there
to the caller is
\begin{eqnarray}
q(\x) &=& \sum_{i=1}^r \tilde{P}_i u_i(\x).
\label{eq:q_response}
\end{eqnarray}
(We note that thus defined $q(\x)$ is always Lipschitz-continuous.)\\
A global minimum of $q$ is obviously the optimal place to ``park'' the vehicle
while expecting a call.  But what if the current position is not a global
minimum of $q$?  The most intuitive approach, based on a gradient descent in $q$,
is far from optimal.  (E.g., it would prescribe not moving away from any
local minimum of $q$ -- a clearly bad strategy when $\lambda$
is small enough and there is a good chance of reaching the global minimum of $q$
before the next call.)
Since we are trying to minimize the expected response time,
it is logical to set $K=0$, since in this continuous-time control,
the vehicle starts responding instantaneously.  (This is in contrast to
the discrete-transitions scenario considered in section \ref{ss:optimal_idler}.)

We consider an example in which $f$ is piecewise-constant: $f=0.2$
in a large circular ``slow'' region in the center and $f=1$ everywhere else.
We use 4 different call locations ($\xBtilde_1, \ldots, \xBtilde_4$)
symmetric relative to the slow region and numbered
counter-clockwise starting from the lower left;
see Figures \ref{fig:circular_slow_lambda_small} and \ref{fig:circular_slow_lambda_25}.

We perform this experiment with several sets of parameter values.
First, we set $\lambda=0.05$, $\tilde{P}_1 = \tilde{P}_2 = \tilde{P}_3 = 0.2$
and $\tilde{P}_4=0.4$.  The results in Figure \ref{fig:circular_slow_lambda_small}
highlight the differences between $q$ and $v$.

%COMMENT REMOVED
\begin{figure}[H]
\center{
$
\hspace*{-1cm}
\begin{array}{cc}
\includegraphics[width=9cm]{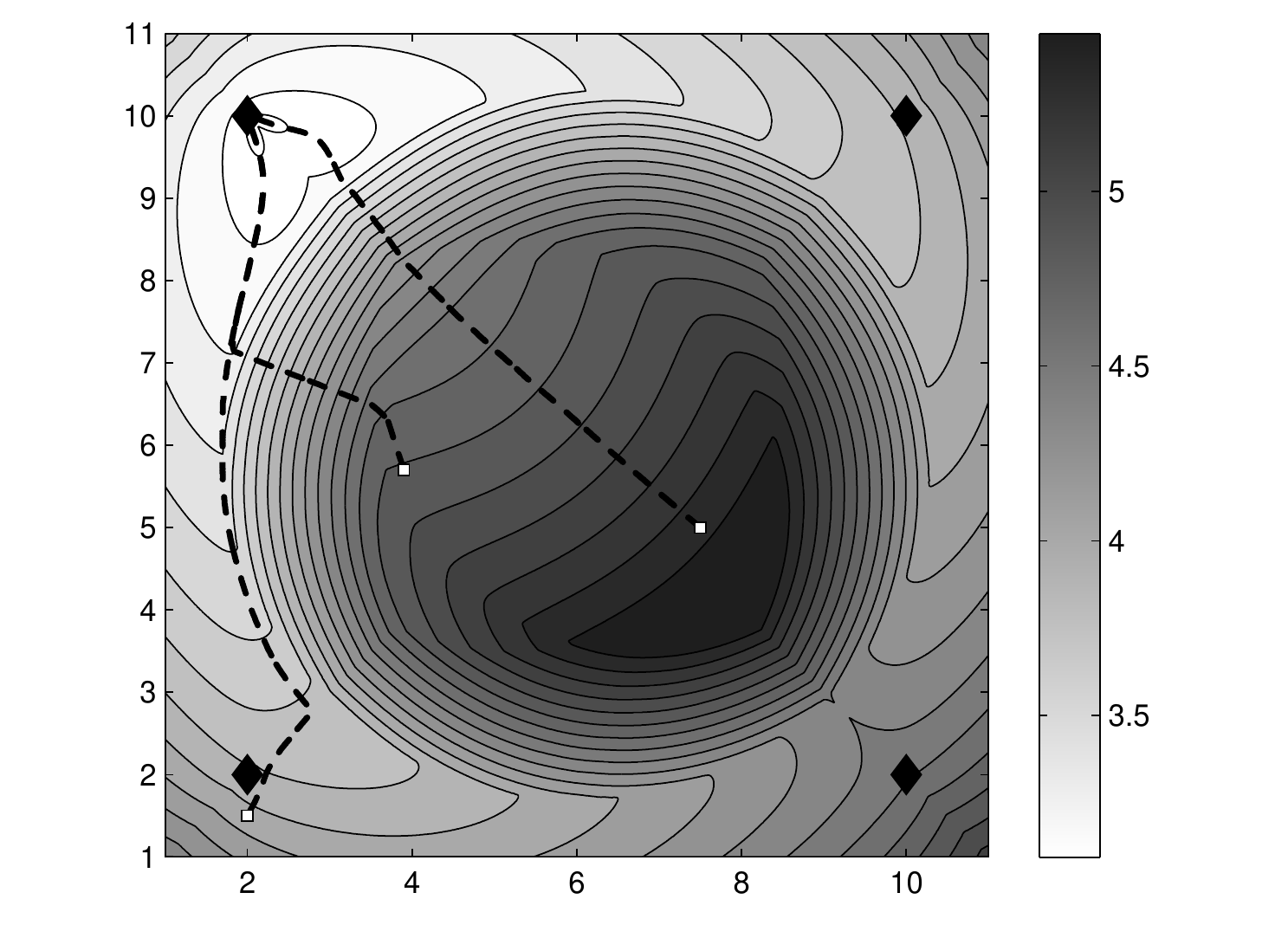}&
\includegraphics[width=9cm]{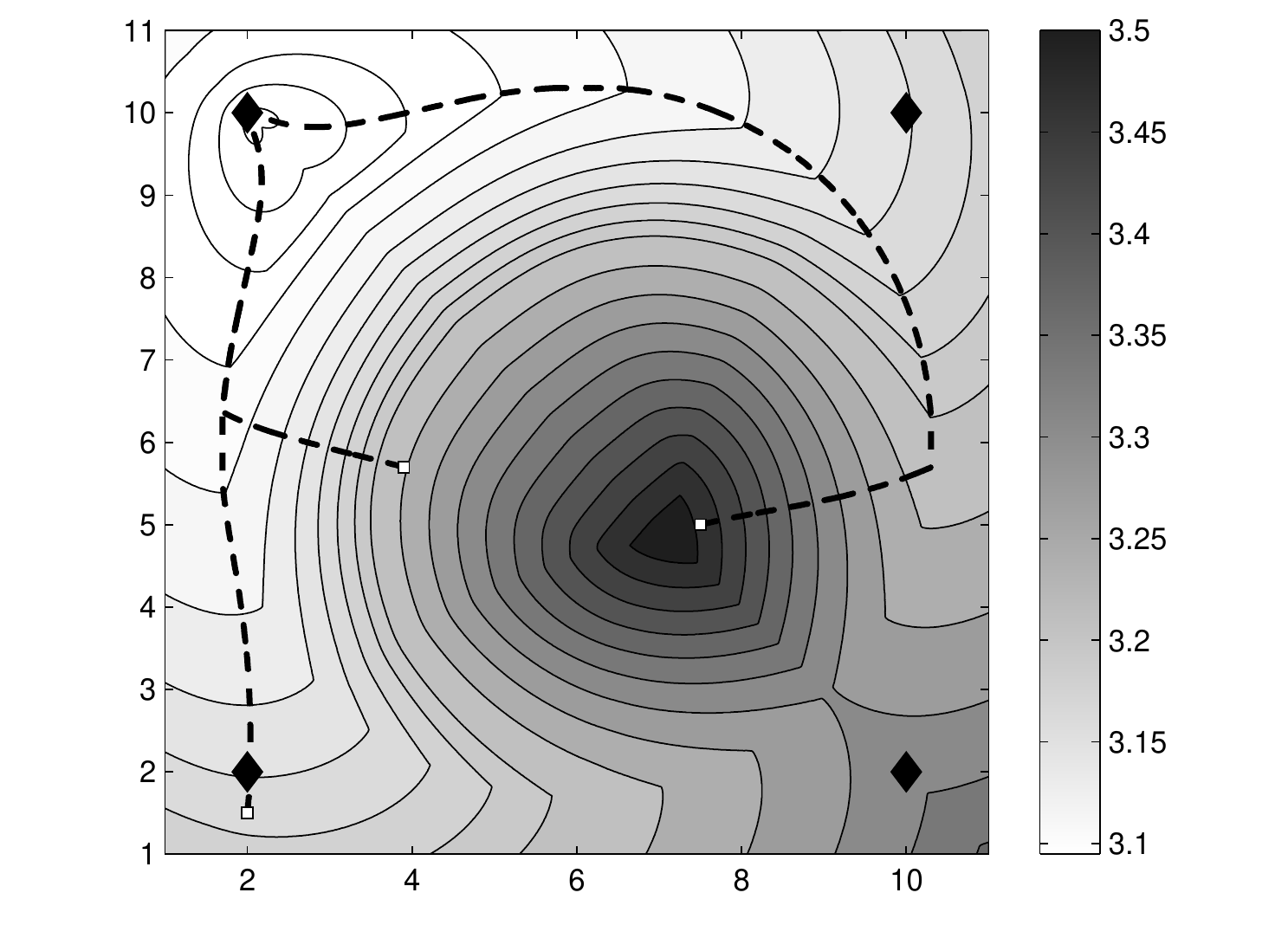}\\
A & B
\end{array}
$
}
\caption{
{\footnotesize
$\lambda=0.05 \,$
Four emergency call locations (shown by black diamonds) around a circular
``slow region''.
Three sample starting locations are shown by small white squares.
Level curves of $q$ (left) and of $v$ (right)
shown by solid lines.  ``Optimal'' trajectories
(shown by dotted lines)
found by gradient descent in $q$ (on the left)
are quite different from the truly optimal
trajectories found by gradient descent in $v$ (on the right).
%COMMENT REMOVED
}
}
\label{fig:circular_slow_lambda_small}
\end{figure}

We then set $\lambda=25$ and repeat the experiment two more times:
with equal probabilities ($\tilde{P}_1 = \ldots = \tilde{P}_4 = 0.25$)
in Figure \ref{fig:circular_slow_lambda_25}A and with varying probabilities
($\tilde{P}_1 = \tilde{P}_2 = 0.2; \; \tilde{P}_3 = 0.25; \; \tilde{P}_4 = 0.35$)
in Figure \ref{fig:circular_slow_lambda_25}B.
In all three cases the computations are performed on a $1001 \times 1001$  grid
and the level sets are selected to highlight the complex structure of
the solution outside of the slow region.
As the figures show, two nearby starting locations can easily produce
dramatically different optimal trajectories when the locations are on
different sides of the shock line (where $\nabla v$ is undefined).

\begin{figure}[hhhh]
\center{
$
\hspace*{-1cm}
\begin{array}{cc}
\includegraphics[width=9cm]{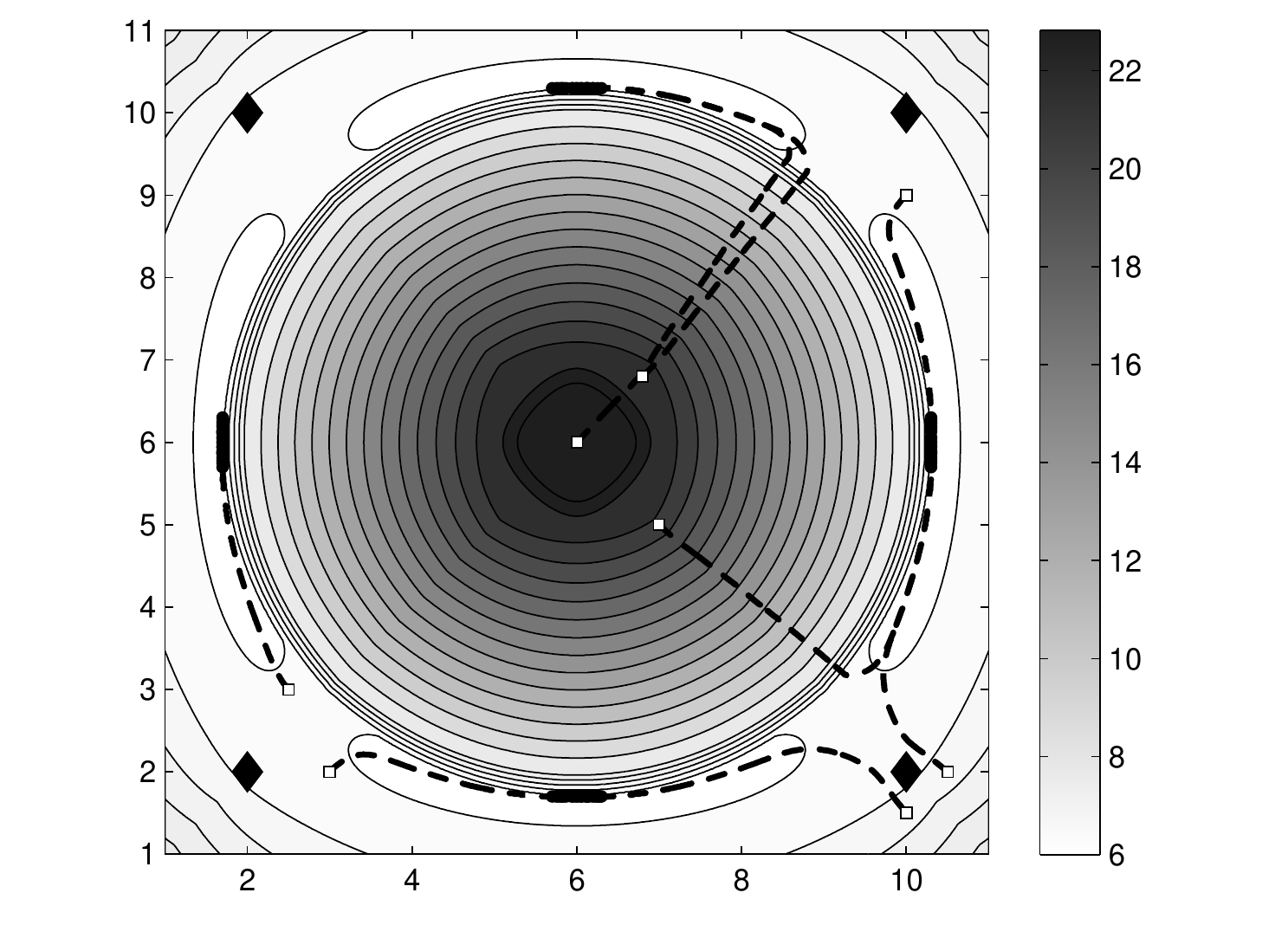}&
\includegraphics[width=9cm]{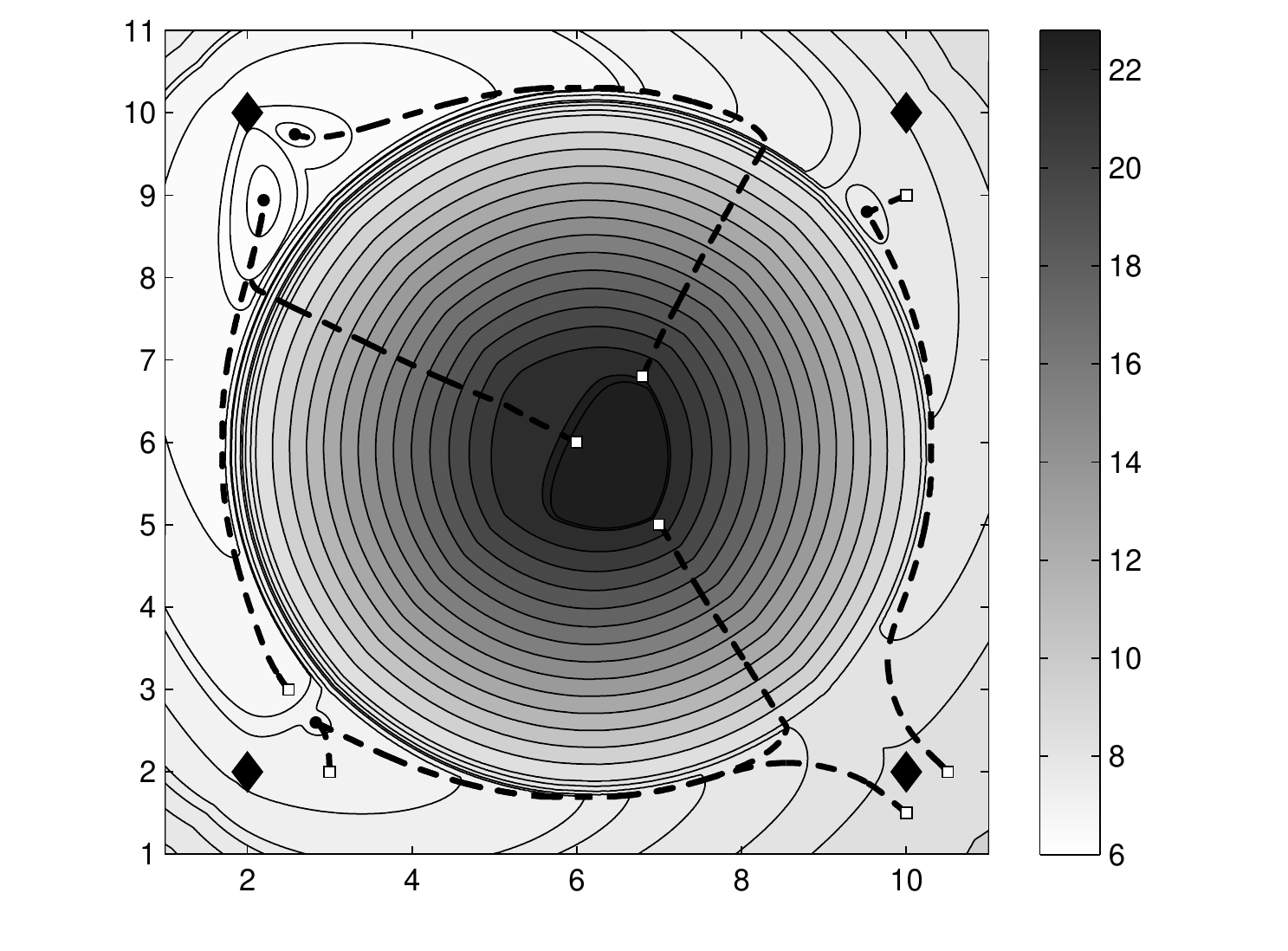}\\
A & B
\end{array}
$
}
\caption{
{\footnotesize
$\lambda=25. \,$
Four emergency call locations (shown by black diamonds) around a circular
``slow region''.  Level curves of $v$ shown by solid lines.
Optimal trajectories are shown by dotted lines, running from several starting locations
(shown by small squares) to the free boundary $\B$ (found numerically and
shown by thick black dots).
}
}
\label{fig:circular_slow_lambda_25}
\end{figure}

\begin{remark}
We note that $K \equiv 0$ leads to $\M$ consisting of (a subset of) local minima
locations of $q$; these are found numerically (and indicated by thick black
dots in the corresponding figures).
As a result, the true shape of $\M$ may not become apparent
even on fairly refined computational grids.
For example, when all caller locations are equally likely, it
is relatively easy to show analytically that the minima of $q$ are
attained at four isolated points only, while
Figure \ref{fig:circular_slow_lambda_25}A shows a larger motionless set.
This, however, is due to the fact that the value function varies very
slowly in this region (e.g., order of $10^{-6}$ variation of $q$
on the numerically found $\M$).  This also presents an additional challenge
in recovering optimal trajectories, which for isotropic problems
is done by a gradient descent in $v$.  The characteristic
equations show that, when $K \equiv 0$, the directional derivative of $v$ along the
optimal trajectory becomes zero at $\B$.
To circumvent this difficulty, our current implementation forces
the trajectory to take a straight line
path to $\B$, when the distance to it decreases below an
$h$-dependent threshold.
No such heuristic adjustments are needed when $\widebar{K}_1 >0$.
\end{remark}

\subsection{Navigating a maze}
\label{ss:maze}

The following example illustrates the effect of changing $\lambda$
on the free boundary for problems with non-zero running cost $K$.

We assume two possible locations for emergency calls:
$\xBtilde_1 = (1.0, 0.1)$ and $\xBtilde_2 = (9.0, 0.1)$
with the corresponding probabilities of calls
$\tilde{P}_1 = 0.2$ and $\tilde{P}_2 = 0.8$	
inside the domain $\cdomain = [0,10] \times [0,10]$.
We further assume that the domain contains a ``maze''
with $K$ high / $f$ low within its ``walls''
and $K$ low / $f$ high everywhere else.
Figure \ref{fig:Maze_setup} shows the described $f$, $K$, and
the resulting terminal cost function $q$ computed from
\eqref{eq:q_response}.

\begin{figure}
\begin{center}
$
\hspace*{-1cm}
\begin{array}{ccc}
\includegraphics[width=6cm]{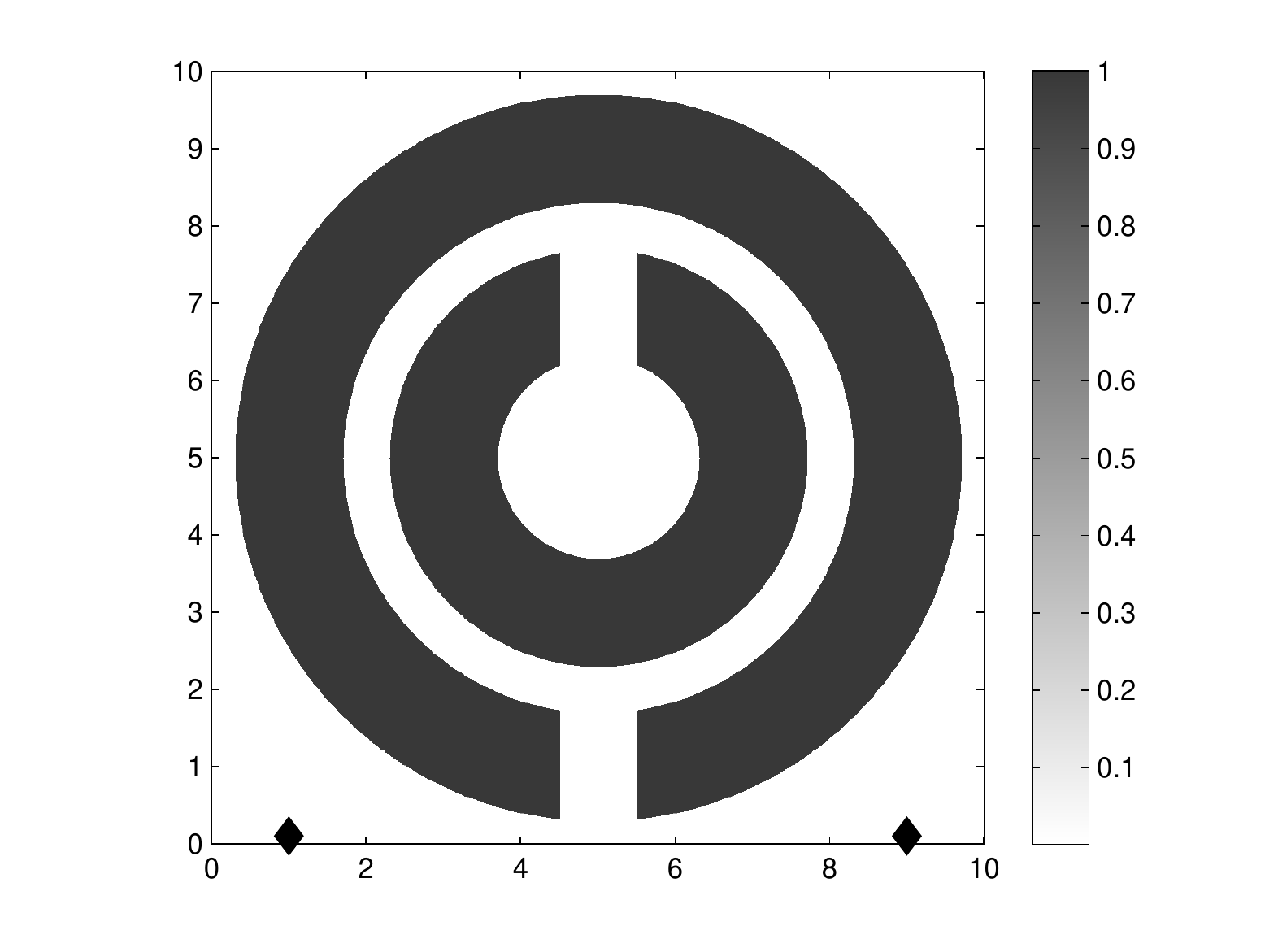}&
\includegraphics[width=6cm]{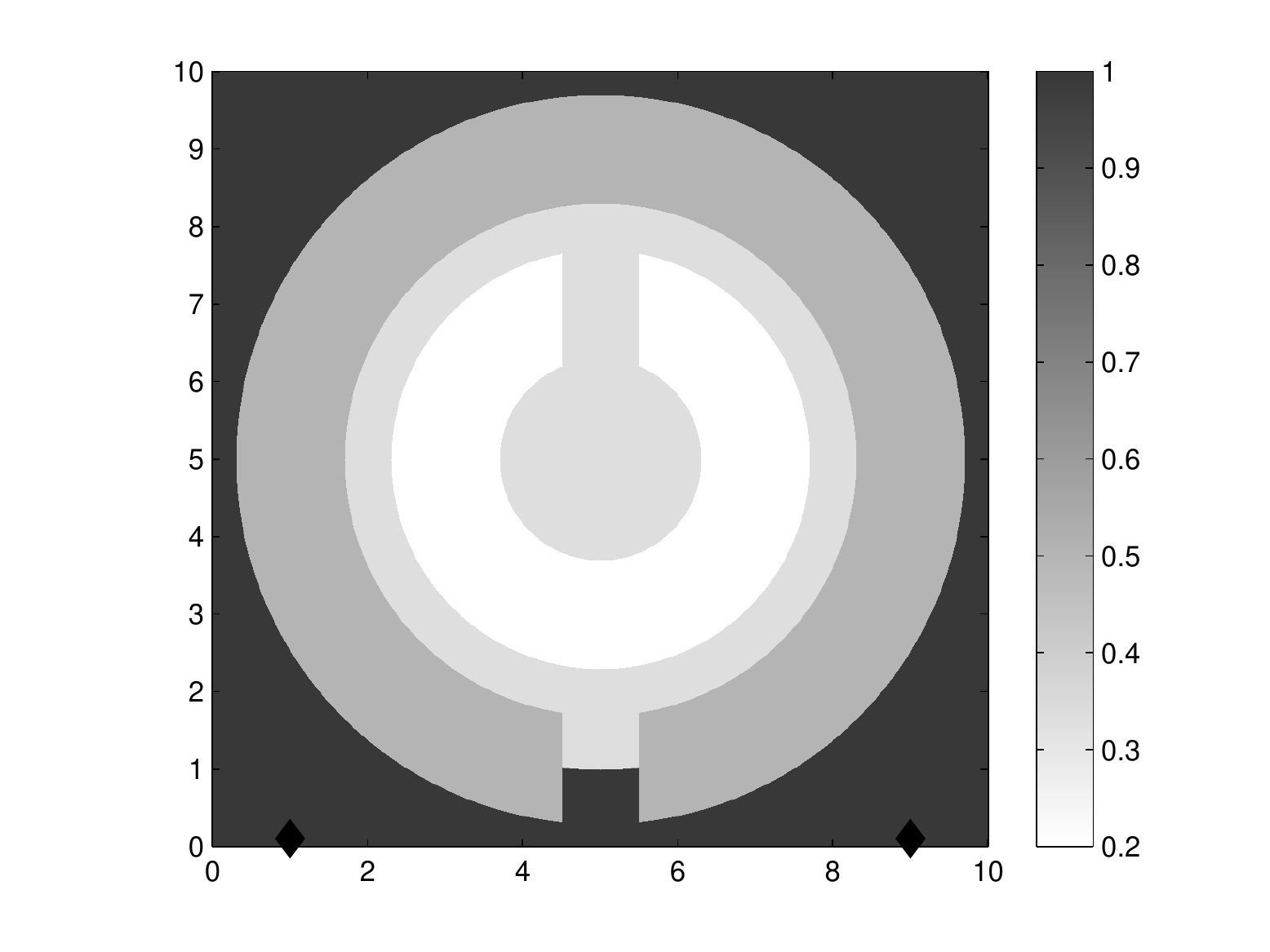}&
\includegraphics[width=6cm]{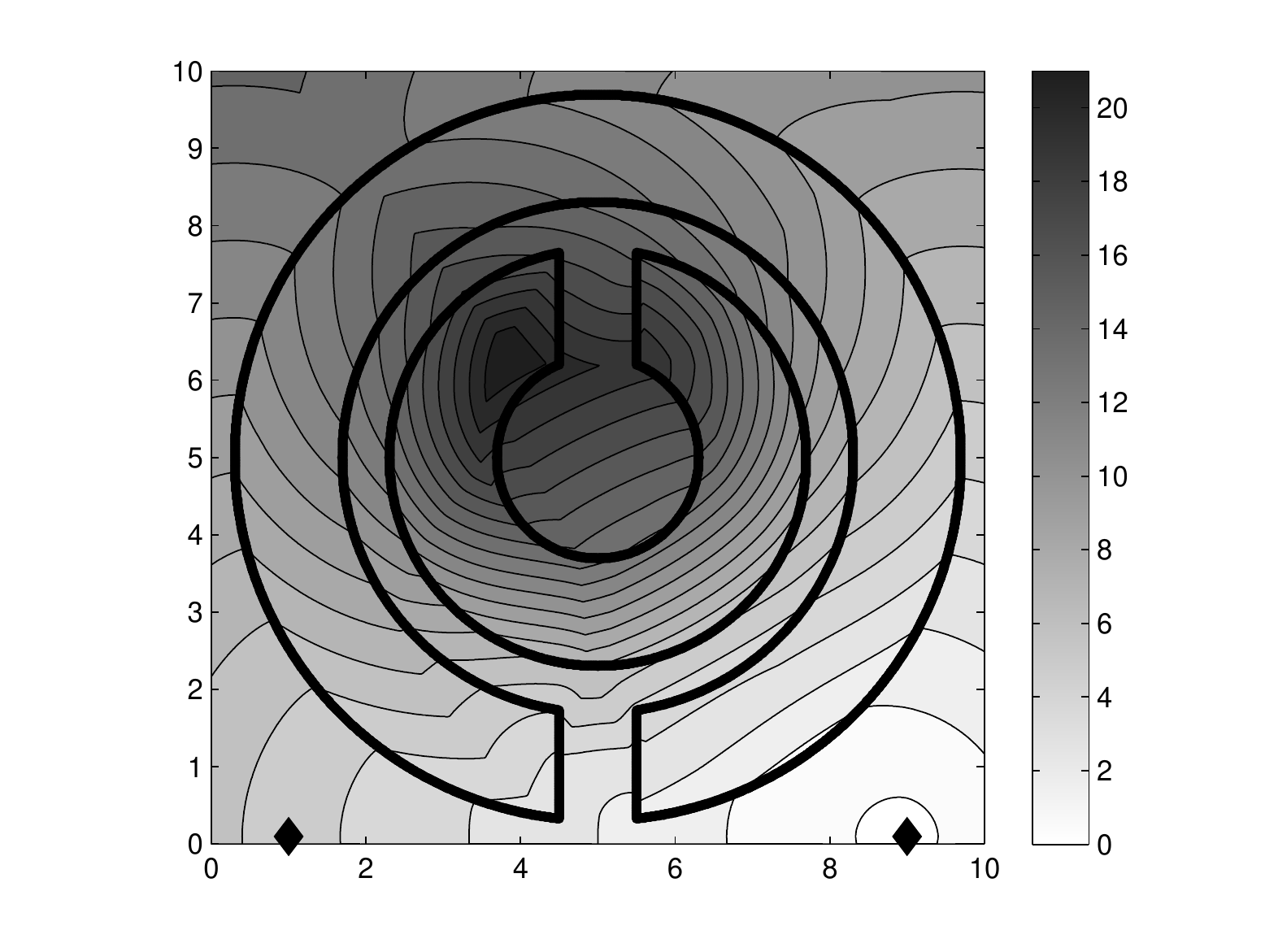}
\end{array}
$
\end{center}
\caption{ The ``maze example'' : running cost $K$ (Left), speed $f$ (Center), and
the terminal cost $q$ (Right).  The caller locations are shown by black diamonds.
The thick solid lines in the last subfigure show  $\partial \M^{\infty}$.}
\label{fig:Maze_setup}
\end{figure}
%COMMENT REMOVED

Figure \ref{fig:Maze_solutions} shows the level curves of $v$ and
the optimal trajectory starting from the center of the maze
computed for different values of $\lambda$.
The free boundary $\B$ is also indicated by a thick line in each case.
We note that the running cost matters only until
the emergency call arrives.  Thus,
for very small $\lambda$'s, it is likely
that the global minimum of $q$ (i.e., the point
$\xBtilde_2$) can be reached avoiding the walls before the first call arrives.
When $\lambda$ increases, it becomes more optimal to head
toward (and then through)
the closest wall, hoping that the call arrives before we reach it.
Finally, for large $\lambda$'s
it is optimal to stop and wait for the call at the wall boundary.

\begin{figure}
\begin{center}
$
\begin{array}{cc}
\includegraphics[width=9cm]{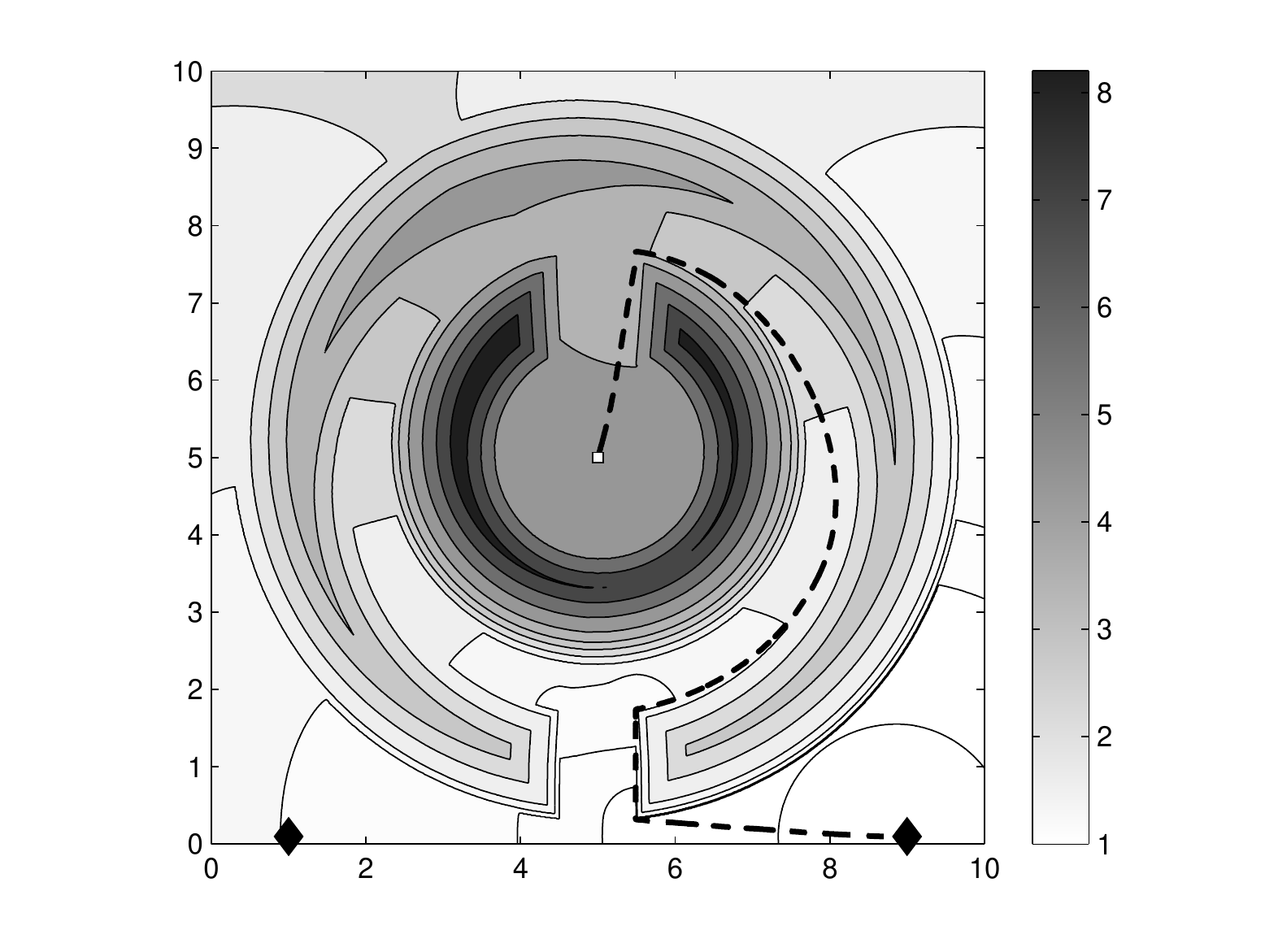}&
\includegraphics[width=9cm]{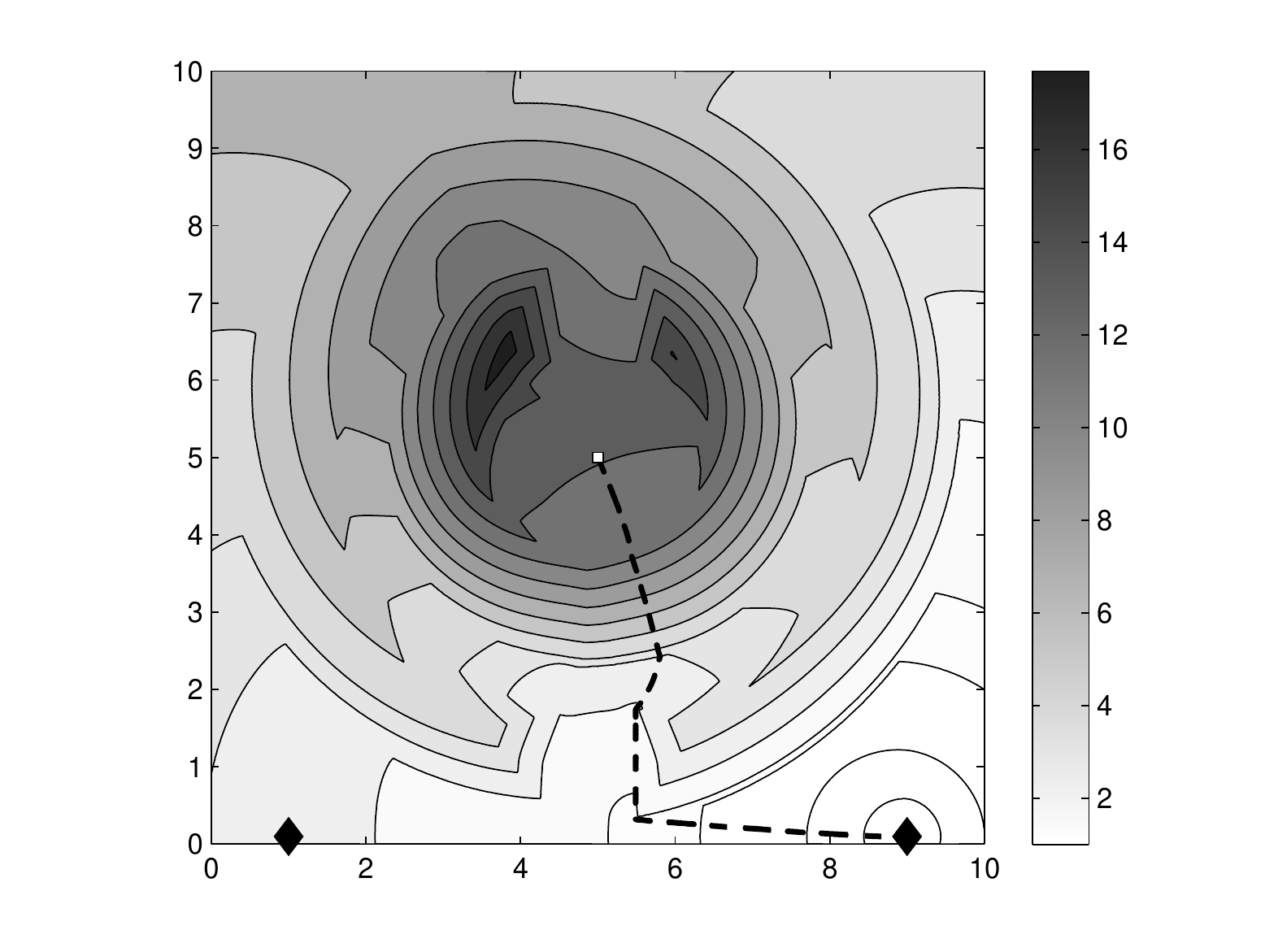}\\
\lambda = 0.01 &
\lambda = 0.1\\
\\
\includegraphics[width=9cm]{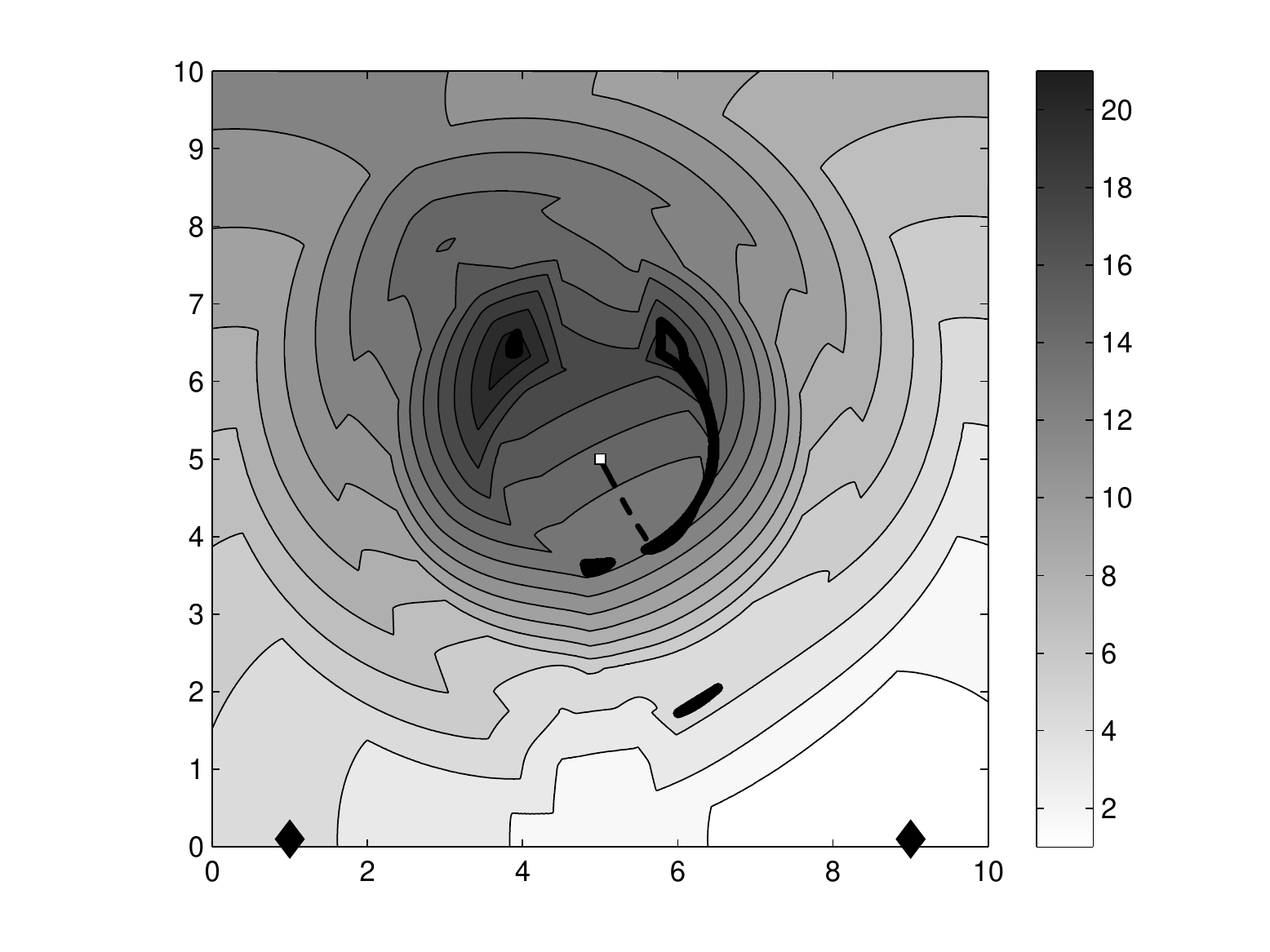}&
\includegraphics[width=9cm]{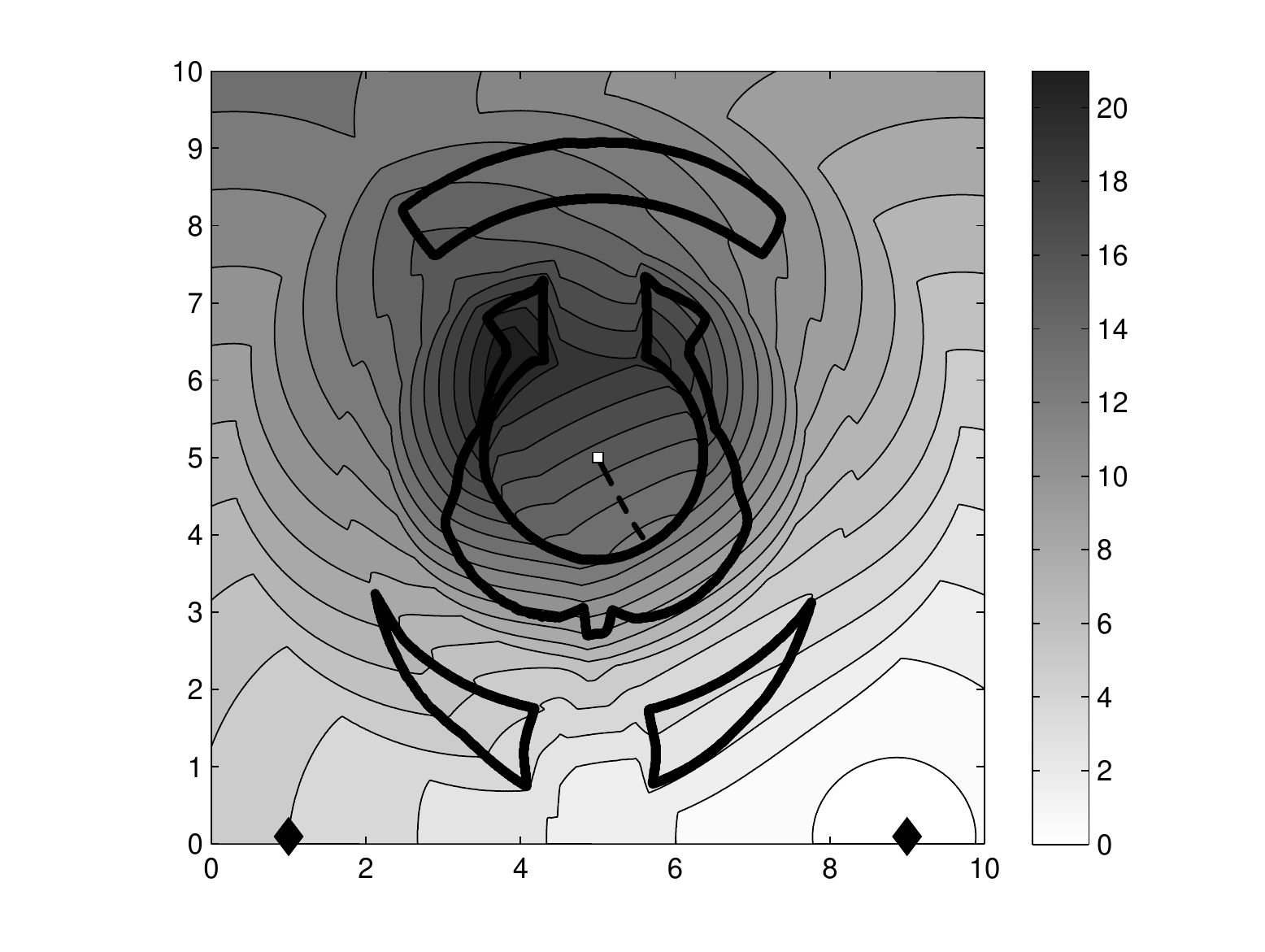}\\
\lambda = 0.45 &
\lambda = 1.5
\end{array}
$
\end{center}
\caption{ The ``maze example'' : level-curves of the value function
$v(\x)$ computed for 4 different values of $\lambda$.
In each case, the optimal trajectory starting from the center is
indicated by a dotted line.
For larger $\lambda$'s, when $\M$ is not just the global minimum of $q$,
the free boundary is shown by a thick solid line.}
\label{fig:Maze_solutions}
\end{figure}

\section{Conclusions.}
\label{s:conclusions}

We have considered a wide class of uncertain-horizon problems
and showed that non-iterative methods can be used to compute
their value functions both in discrete and continuous settings.
The numerical examples in section \ref{s:examples} have illustrated
both the convergence properties and the asymptotic behavior of
the free boundary $\partial \M$.
Our modification of the Fast Marching Method in section \ref{ss:FMM_modified}
addressed the isotropic case only, but similarly modified
Ordered Upwind Methods \cite{SethVlad2, SethVlad3, AltonMitchell2}
can be used to treat the
anisotropic cost and dynamics in randomly-terminated problems.
A Dial-like version of the Fast Marching Method
will be similarly applicable if \eqref{eq:Eikonal_var_ineq} is discretized
on an acute triangulated mesh \cite{VladMSSP}.
Another fairly straightforward generalization
is to treat inhomogeneous termination rates
in the continuous case; i.e., $\lambda = \lambda(\x)$;
this would require only minimal changes to the label-setting algorithms.

In this paper we have not discussed the label-correcting methods
\cite{Bertsekas_NObook}, whose asymptotic complexity is worse but practical
performance is sometimes better than that of label-setting methods.
Their applicability to uncertain-horizon problems is clearly
also of interest.  In the continuous case, we believe that
fast sweeping methods (e.g., \cite{BoueDupuis, TsaiChengOsherZhao, Zhao})
and various fast-iterative methods
(e.g., \cite{PolyBerTsi, BorRasch, Renzi, JeongWhitaker})
should be easy to extend to randomly-terminated problems.
The same is also true for hybrid two-scale methods that aim
to combine the advantages of marching and sweeping \cite{ChacVlad, ChacVlad2}.
%COMMENT REMOVED
%COMMENT REMOVED
%COMMENT REMOVED
Careful testing would be needed to compare the computational
efficiency of these alternatives to that of Dijkstra-like methods considered here.

The second author has previously studied the so-called Multimode SSP (MSSP) in \cite{VladMSSP}
and derived %COMMENT REMOVED
sufficient conditions for the applicability of
the label-setting methods to them.  We emphasize that
the uncertain-horizon problems considered here cannot be cast as MSSPs
and the results from \cite{VladMSSP} do not apply.
However, the results from both papers can be easily combined
to address the randomly-terminated MSSPs.
More generally, we believe that label-setting methods will
be applicable for a broader class of {\em hierarchically causal} SSPs.
A natural example from this category is the ``SSP with recourse''
problem, where the transition-costs are known
only probabilistically at first
and their true values are learned in the process of
traveling through the graph.
A Dijkstra-like method for such problems was previously introduced
by Polychronopoulos and Tsitsiklis in \cite{TsiPoly}.
%COMMENT REMOVED

On the continuous side, the randomly-terminated problems form a
simple subclass of {\em piecewise-deterministic} problems \cite{Davis, Haurie}.
The latter arise when a Poisson process governs random switches
between several known types of deterministic dynamics/cost,
yielding a system of weakly-coupled static non-linear HJB PDEs.
We believe that these more general problems can also be treated
by non-iterative numerical methods provided all
non-deterministic transitions (and the resulting couplings between the PDEs)
are hierarchically causal.

Finally, we note that in all randomly-terminated problems considered
in the current paper,
the only goal was to minimize the expected value of the total cost.
An interesting direction for future work is to incorporate
simultaneous optimization and/or constraints based on the worst case scenario.
Efficient algorithms for this more difficult problem can be built
using the recent method for
multiobjective optimal control introduced in \cite{KumarVlad}.

\vspace{.2in}
\noindent
{\bf{Acknowledgments:}}
The authors are grateful to anonymous reviewers for their suggestions on improving this paper.
The second author would also like to thank A. Dukhovny and V. Protsak.
%COMMENT REMOVED

\section{Appendix A: optimality of stationary policies.}
\label{s:Appendix_A}

This section contains the results on existence of optimal stationary policies
for the general randomly-terminated processes on graphs.
We note that the following proofs do not make any use of assumptions
\Assume{1}-\Assume{3}; i.e., both the transition penalties $K_{ij}$ and
the terminal costs $q_j$ can be positive or negative and the self-transitions
need not be allowed.

A function $\mu : X \mapsto X$ is a {\em control mapping}
if $\mu(x) \in N(x)$ for $\forall x \in X$.
A {\em policy} is an infinite sequence of control mappings
$\pi = (\mu_0, \mu_1, \ldots)$.
Starting from any $x \in X$
a policy will generate a particular path $\y^{\pi}$ as follows: $y^{\pi}_0 = x$,
$y^{\pi}_{k+1}  = \mu_k(y^{\pi}_k)$ for $\forall k \geq 0$.
Defining $\J(x, \pi) = J(\y^{\pi})$, we can also re-write
the value function as
\begin{equation}
\label{eq:value_through_policies}
V(x) = \min\limits_{\pi} \J(x, \pi).
\end{equation}
If $\mu = \mu_0 = \mu_1 = \ldots$, the corresponding policy $\pi = (\mu, \mu, \ldots)$
is called {\em stationary}.  (We will also somewhat abuse the notation and refer to a
stationary policy $\mu$.)  Since stationary policies generate only simple paths,
Theorem \ref{thm:stationary} proves the existence of an optimal stationary policy.
Formula \eqref{eq:path_cost_recursive} shows that for any stationary policy,
$\J(x, \mu) = K(x, \mu(x)) + p q(\mu(x)) + (1-p) \J(\mu(x), \mu))$,
which implies the dynamic programming equation \eqref{eq:DP_uncertain}.

\begin{remark}
\label{rem:relation_to_SSP}
Our problem can be easily recast as a Stochastic Shortest Path (SSP) problem
by adding a special absorbing terminal node $x_{\bm{t}} = x_{M+1}$,
and considering $N(x_i)$ to be the set of controls available at $x_i$.
A choice of the control $x_j \in N(x_i)$ then results in a transition
to the node $x_j$ with probability $(1-p)$ and to
the node $x_{\bm{t}}$ with probability $p$.  The cost associated with
this control is $(K_{ij} + p q_j)$.
The process terminates upon reaching $x_{\bm{t}}$.
In \cite{BertsTsi} Tsitsiklis and Bertsekas proved the equivalents of Lemma \ref{lm:min_attained} and
Theorem \ref{thm:stationary} for a broader class of general SSPs.
However, in our setting, direct proofs are much simpler and
%COMMENT REMOVED
%COMMENT REMOVED
exploit the special structure of this problem.
\end{remark}

We first note the following useful generalization of
the recursive formula (\ref{eq:path_cost_recursive}).
Starting from (\ref{eq:infinite_horizon}),
\begin{eqnarray}
\nonumber
J(\y) = E[Cost(y_0, \ldots, y_k, \ldots)]
&=& \sum_{i=0}^{k-1} \left( K(y_i, y_{i+1}) + p q(y_{i+1}) \right) (1-p)^i
\; + \;
\sum_{i=k}^{\infty} \left( K(y_i, y_{i+1}) + p q(y_{i+1}) \right) (1-p)^i\\
&=& \sum_{i=0}^{k-1} \left( K(y_i, y_{i+1}) + p q(y_{i+1}) \right) (1-p)^i
\; + \;
(1-p)^k \, J(y_k, y_{k+1}, \ldots).
\label{eq:path_recursive_general}
\end{eqnarray}

\begin{lemma}
For every $x \in X$, there exists $\ybar \in Y(x)$ such that
$J(\ybar) = \inf\limits_{\y \in Y(x)} J(\y).$
\label{lm:min_attained}
\end{lemma}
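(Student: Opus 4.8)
The plan is to realize $Y(x)$ as a sequentially compact space and $J$ as a continuous functional on it, so that the infimum — which is finite because $|J|$ is uniformly bounded on $Y$ — is attained. I would avoid invoking Tychonoff's theorem and instead argue by a direct diagonal extraction. Fix a minimizing sequence $\y^{(n)} \in Y(x)$, i.e.\ $J(\y^{(n)}) \to v_* := \inf_{\y \in Y(x)} J(\y)$. Since $X$ is finite, repeatedly passing to subsequences and then diagonalizing produces a subsequence — which I relabel $\y^{(n)}$ — such that for every coordinate index $k$ the entry $y^{(n)}_k$ is eventually constant in $n$; call that eventual value $\bar y_k$ and set $\ybar = (\bar y_0, \bar y_1, \ldots)$. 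Then $\ybar \in Y(x)$: indeed $\bar y_0 = x$, and for each $k$, taking $n$ large enough that $y^{(n)}_k = \bar y_k$ and $y^{(n)}_{k+1} = \bar y_{k+1}$ forces $\bar y_{k+1} = y^{(n)}_{k+1} \in N(y^{(n)}_k) = N(\bar y_k)$, so $\ybar$ is a legitimate path starting at $x$.

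It then remains to check $J(\ybar) = \lim_n J(\y^{(n)})$, which together with the above gives $J(\ybar) = v_*$. The key estimate is that $J$ barely depends on the far coordinates of a path: if $\y, \z \in Y$ agree in their first $N+1$ entries, then $Cost(y_0, \dots, y_t) = Cost(z_0, \dots, z_t)$ for every $t \le N$, and using the elementary bound $|Cost(y_0, \dots, y_t)| \le Ct + D$ with $C = \max\{|K_{ij}| \mid x_j \in N(x_i)\}$ and $D = \max_i |q_i|$ we get
\[
\bigl| J(\y) - J(\z) \bigr| \;\le\; \sum_{t=N+1}^{\infty} \hat{P}_t \bigl( |Cost(y_0,\dots,y_t)| + |Cost(z_0,\dots,z_t)| \bigr) \;\le\; 2\sum_{t=N+1}^{\infty} (1-p)^{t-1} p\,(Ct+D) \;=:\; \varepsilon_N .
\]
Since $0 < 1-p < 1$, this is the tail of a convergent series, so $\varepsilon_N \to 0$ as $N \to \infty$. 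Given any $N$, choosing $n$ large enough that $\y^{(n)}$ and $\ybar$ share their first $N+1$ coordinates yields $|J(\ybar) - J(\y^{(n)})| \le \varepsilon_N$, and letting $N \to \infty$ finishes the argument.

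The main (and essentially only) obstacle is making this last continuity estimate precise: because $|Cost(y_0, \dots, y_t)|$ grows — at most linearly — in $t$, $J$ is not literally a uniform limit of its partial sums, and one must lean on the geometric decay of $\hat{P}_t = (1-p)^{t-1}p$ to dominate that growth; this is the same computation that underlies the uniform boundedness of $|J|$ on $Y$ already recorded in the text. Everything else is routine. Equivalently, one may phrase the whole proof topologically: the same estimate shows $J$ is uniformly continuous for the metric on $Y$ in which paths are close precisely when they share a long common prefix, and $Y(x)$ is closed and hence compact in this metric, so the lemma also follows from the standard fact that a continuous function on a compact set attains its minimum.
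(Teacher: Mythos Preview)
Your proof is correct and follows essentially the same route as the paper: compactness of $Y(x)$ via a diagonal argument plus continuity of $J$ with respect to the ``common prefix'' topology. The only cosmetic difference is that the paper obtains a sharper continuity bound by invoking the recursive identity $J(\y) = \sum_{i=0}^{k-1}\bigl(K(y_i,y_{i+1})+p\,q(y_{i+1})\bigr)(1-p)^i + (1-p)^k J(y_k,y_{k+1},\ldots)$, which immediately gives $|J(\y)-J(\z)| \le 2\|J\|_\infty (1-p)^k$ when $\y$ and $\z$ first differ at index $k$; this makes $J$ Lipschitz for the metric $dist(\y,\z)=(1-p)^{k(\y,\z)}$ and replaces your tail sum $\sum_{t>N} (1-p)^{t-1}p(Ct+D)$ with a single geometric factor.
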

\begin{proof}
Given any two paths $\y, \tilde{\y} \in Y$,
let $k$ be the first stage of the process where the paths become different;
i.e., $k(\y, \tilde{\y}) = \min \{j \, \mid \, y_j \neq \tilde{y}_j \}$.
A natural metric on $Y$ is defined by
$$
dist(\y, \tilde{\y}) =
\begin{cases}
(1-p)^{k(\y, \tilde{\y})}, & \text{ if } \y \neq \tilde{\y};\\
0, & \text{ if } \y = \tilde{\y}.
\end{cases}
$$
and the induced topology makes $Y$ totally disconnected.
The compactness of $Y$ follows from the standard diagonalization argument.
$Y(x)$ is thus a closed subset of $Y$ and also compact.
On the other hand, based on (\ref{eq:path_recursive_general}),
$J: Y \mapsto R$ is Lipschitz-continuous.
%COMMENT REMOVED
As a continuous function on a compact
set it must attain the minimum at some $\ybar \in Y(x)$.
\end{proof}

We will use $Y^*(x)$ to denote the set of all such {\em minimizing paths}
starting from $x$.

\begin{lemma}
Suppose $\y = (y_0, y_1, \ldots) \in Y^*(x),$
while $k$ and $m$ are non-negative integers such that $k < m$ and
$y_k = y_m$.  Then $J(\y) = J(y_0, \ldots, y_{k}, y_{m+1}, y_{m+2}, \ldots)$.
\label{lm:min_equally_good_tail}
\end{lemma}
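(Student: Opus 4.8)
The plan is to peel off the common prefix $(y_0,\ldots,y_k)$ shared by $\y$ and the loop-excised path $\tilde{\y}=(y_0,\ldots,y_k,y_{m+1},y_{m+2},\ldots)$ using the recursive cost formula \eqref{eq:path_recursive_general}, and then to observe that the two remaining tails cost the same, because each represents optimal behavior starting from the node $y_k=y_m$.

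The one fact I would establish first is that every tail of a minimizing path is again minimizing: if $\y=(y_0,y_1,\ldots)\in Y^*(x)$, then $(y_j,y_{j+1},\ldots)\in Y^*(y_j)$, and in particular $J(y_j,y_{j+1},\ldots)=V(y_j)$, for all $j\ge 0$. This is immediate from the one-step recursion \eqref{eq:path_cost_recursive} together with the Optimality Principle \eqref{eq:DP_uncertain}: if some path from $y_1$ were strictly cheaper than $(y_1,y_2,\ldots)$, then prepending $y_0$ would produce a path from $x$ beating $\y$; induction on $j$ then gives the general statement. (Equivalently, this is the remark made inside the proof of Theorem~\ref{thm:causality} that an optimal path is optimal for every node on it.) Note that $\tilde{\y}\in Y(x)$ is a legitimate path, since $y_{m+1}\in N(y_m)=N(y_k)$.

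Now apply \eqref{eq:path_recursive_general} at the index $k$ to both $\y$ and $\tilde{\y}$. The two paths have identical first $k+1$ nodes, so the finite prefix sums $\sum_{i=0}^{k-1}\bigl(K(y_i,y_{i+1})+p\,q(y_{i+1})\bigr)(1-p)^i$ agree, leaving
$$
J(\y)-J(\tilde{\y}) \;=\; (1-p)^k\Bigl[\,J(y_k,y_{k+1},y_{k+2},\ldots)\;-\;J(y_k,y_{m+1},y_{m+2},\ldots)\,\Bigr].
$$
By the first step the left-hand tail costs $V(y_k)$. For the right-hand tail, the key point is that $J$ depends only on the sequence of visited nodes and not on any labeling of the steps; since $y_k=y_m$, the sequence $(y_k,y_{m+1},y_{m+2},\ldots)$ is literally the tail of $\y$ starting at position $m$, whose cost is $V(y_m)=V(y_k)$. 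Hence the bracket is zero and $J(\tilde{\y})=J(\y)$.

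I do not expect a genuine obstacle; the only point that needs a moment's care is the bookkeeping in the last paragraph — verifying that excising the loop leaves the first $k+1$ nodes untouched, and that the shifted tail is the very same node-sequence as an honest tail of $\y$, so that ``a tail of an optimal path is optimal'' applies to it verbatim. Everything else is a direct substitution into \eqref{eq:path_recursive_general}.
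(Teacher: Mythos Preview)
Your argument is correct, but it differs from the paper's. You first prove the auxiliary ``tail-optimality'' fact (every suffix of a minimizing path is itself minimizing) and then observe that both tails $(y_k,y_{k+1},\ldots)$ and $(y_k,y_{m+1},y_{m+2},\ldots)=(y_m,y_{m+1},\ldots)$ are suffixes of $\y$, hence both cost $V(y_k)$, so the prefix decomposition \eqref{eq:path_recursive_general} gives equality directly. The paper instead argues by contradiction without isolating tail-optimality: optimality of $\y$ gives $J(\y)\le J(\tilde\y)$; if this were strict, peeling off the common prefix would give $J(y_k,y_{k+1},\ldots)<J(y_k,y_{m+1},\ldots)$, and then grafting the cheaper continuation $(y_{k+1},y_{k+2},\ldots)$ onto $\y$ at position $m$ (where $y_m=y_k$) would produce a path strictly cheaper than $\y$. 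Your route is a bit longer but yields a reusable lemma and makes the equality transparent; the paper's route is shorter and self-contained but relies on the slightly tricky ``swap the tail back in'' step. One small caveat: your parenthetical pointer to Theorem~\ref{thm:causality} is harmless since you give an independent proof, but note that Appendix~A is meant to avoid \Assume{1}--\Assume{3}, so the self-contained argument you supply is the one that should be cited here.
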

\begin{proof}
We show that any loop can be removed from an optimal (not necessarily simple) path without increasing
that path's total cost.
By the optimality of $\y$,
$
J(\y) \leq J(y_0, \ldots, y_{k}, y_{m+1}, y_{m+2}, \ldots).
$
On the other hand, if
$
J(\y) < J(y_0, \ldots y_{k}, y_{m+1}, y_{m+2}, \ldots),
$
then, by formula (\ref{eq:path_recursive_general}),
$
J(y_{k}, y_{k+1}, y_{k+2},\ldots) <
J(y_{k}, y_{m+1}, y_{m+2}, \ldots).
$
The latter contradicts the optimality of $\y$ %COMMENT REMOVED
since it implies that
$$
J(\y) = J(y_0, \ldots, y_m, y_{m+1}, y_{m+2}, \ldots) >
J(y_0, \ldots, y_m, y_{k+1}, y_{k+2}, \ldots).
$$
\end{proof}

\begin{lemma}
Suppose $\y = (y_0, y_1, \ldots) \in Y^*(x),$ while
$k$ and $m$ are non-negative integers such that $k < m$ and
$y_k = y_m$.
Define $\tilde{\y}$ by replacing the tail $(y_{k}, \ldots)$
with an infinitely repeated loop $(y_{k}, y_{k+1}, \ldots, y_{m-1})$.
Then $J(\y) = J(\tilde{\y}).$
\label{lm:min_equally_good_loop}
\end{lemma}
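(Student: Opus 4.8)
The plan is to reduce the claim to the recursive cost identity \eqref{eq:path_recursive_general} together with Lemma \ref{lm:min_equally_good_tail}. Set $\ell = m-k$ (the length of the loop), write $\z = (y_k, y_{k+1}, \ldots)$ for the tail of $\y$, and let $\z^{*} = (y_k, \ldots, y_{m-1}, y_k, \ldots, y_{m-1}, \ldots)$ be the infinitely repeated loop, so that $\tilde{\y} = (y_0, \ldots, y_{k-1}, \z^{*})$. Applying \eqref{eq:path_recursive_general} to split off the first $k$ transitions of each path, both $J(\y)$ and $J(\tilde{\y})$ equal the same prefix sum $\sum_{i=0}^{k-1}\bigl(K(y_i,y_{i+1}) + p\,q(y_{i+1})\bigr)(1-p)^i$ plus, respectively, $(1-p)^k J(\z)$ and $(1-p)^k J(\z^{*})$ (here one uses that $\tilde{\y}$ agrees with $\y$ through index $k$). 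Hence it suffices to prove $J(\z) = J(\z^{*})$.

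The main computation is to apply \eqref{eq:path_recursive_general} a second time, peeling off the first $\ell$ transitions (one full loop) from each of $\z$ and $\z^{*}$. Because $y_m = y_k$, these two paths share their first $\ell$ transitions, so they contribute the same constant $S = \sum_{i=0}^{\ell-1}\bigl(K(y_{k+i},y_{k+i+1}) + p\,q(y_{k+i+1})\bigr)(1-p)^i$. The tail of $\z^{*}$ after $\ell$ steps is $\z^{*}$ itself, giving $J(\z^{*}) = S + (1-p)^{\ell} J(\z^{*})$. The tail of $\z$ after $\ell$ steps is $(y_m, y_{m+1}, \ldots)$; since $y_m = y_k$, Lemma \ref{lm:min_equally_good_tail} (applied to the optimal path $\y$ and then stripped of its common $k$-step prefix using \eqref{eq:path_recursive_general}) yields $J(y_m, y_{m+1}, \ldots) = J(y_k, y_{m+1}, y_{m+2}, \ldots) = J(\z)$, so that $J(\z) = S + (1-p)^{\ell} J(\z)$ as well. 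Since $0 < p < 1$ makes $1 - (1-p)^{\ell}$ nonzero, both $J(\z)$ and $J(\z^{*})$ equal $S / \bigl(1 - (1-p)^{\ell}\bigr)$, which proves $J(\z) = J(\z^{*})$ and hence $J(\y) = J(\tilde{\y})$.

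There is no genuinely hard step here; the only real care is in the index bookkeeping — in particular, checking that $\z$ and $\z^{*}$ agree on their first $\ell$ transitions (the loop's closing transition $y_{m-1}\to y_m$ must be matched with the wrap-around transition $y_{m-1}\to y_k$ of $\z^{*}$, which is exactly where $y_m = y_k$ enters) and that $\z^{*}$ is self-similar after $\ell$ steps. As an alternative to the fixed-point argument, one could instead show by induction that prepending $n$ copies of the loop to $\z$ leaves its cost unchanged (each prepended loop being absorbed via the identity $J(\z) = S + (1-p)^{\ell} J(\z)$), note that the resulting paths converge to $\z^{*}$ in the metric introduced in the proof of Lemma \ref{lm:min_attained}, and conclude by the Lipschitz continuity of $J$ established there. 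Either route makes the lemma an essentially immediate corollary of Lemma \ref{lm:min_equally_good_tail}.
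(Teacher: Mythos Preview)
Your proof is correct and follows essentially the same route as the paper: both strip the common $k$-step prefix via \eqref{eq:path_recursive_general}, then use Lemma~\ref{lm:min_equally_good_tail} to obtain the fixed-point identity $J(\z) = S + (1-p)^{\ell} J(\z)$ and match it against the corresponding identity for the pure loop $\z^{*}$. The only cosmetic difference is that the paper evaluates $J(\z^{*})$ by summing the geometric series directly, whereas you obtain the same value from the self-similarity relation $J(\z^{*}) = S + (1-p)^{\ell} J(\z^{*})$.
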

\begin{proof}
First, note that proving the equality for $k=0$ combined with the
formula (\ref{eq:path_recursive_general}) yields the proof for the general case.
Assuming $k=0$,
\begin{equation}
J(\tilde{\y}) =
\sum_{r=0}^{\infty}
\left(
\sum_{i=0}^{m-1} \left( K(y_i, y_{i+1}) + p q(y_{i+1}) \right) (1-p)^{i+r m}
\right)
=
\left(
\sum_{i=0}^{m-1} \left( K(y_i, y_{i+1}) + p q(y_{i+1}) \right) (1-p)^{i}
\right)
\frac{1}{1 - (1-p)^m}.
\end{equation}

By Lemma \ref{lm:min_equally_good_tail} and formula (\ref{eq:path_recursive_general}),
$$
J(\y) \; = \; \sum_{i=0}^{m-1} \left( K(y_i, y_{i+1}) + p q(y_{i+1}) \right) (1-p)^i
\; + \;
(1-p)^m \, J(\y);
$$
$$
\sum_{i=0}^{m-1} \left( K(y_i, y_{i+1}) + p q(y_{i+1}) \right) (1-p)^i
\; = \;
\left( 1 - (1-p)^m \right) \, J(\y),
$$
which yields $J(\tilde{\y}) = J(\y).$
\end{proof}

\begin{thm}
There exists a stationary policy $\mu$ such that $\J(x, \mu) = V(x)$.
\label{thm:stationary}
\end{thm}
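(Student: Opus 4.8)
The plan is to build an explicit stationary policy $\mu$ every one-step move of which attains the minimum in the Optimality Principle \eqref{eq:DP_uncertain}, and then to show that such a ``locally optimal everywhere'' policy is automatically optimal. The second half is where the random termination does the work: because a cost incurred $k$ steps into the future is weighted by $(1-p)^k$, any sub-optimality that can be postponed indefinitely has no effect on the present expected cost, so local optimality at each visited node forces optimality of the whole infinite path.

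For the construction I would fix, for each $x \in X$, a minimizing path $\y^x = (x, y^x_1, y^x_2, \ldots) \in Y^*(x)$ (Lemma \ref{lm:min_attained}). Its tail $(y^x_1, y^x_2, \ldots)$ is again minimizing for $y^x_1$: if some path from $y^x_1$ were strictly cheaper, prepending the transition $(x \to y^x_1)$ and using \eqref{eq:path_cost_recursive} would yield a path from $x$ beating $\y^x$, contradicting $V(x) = \min_{\y \in Y(x)} J(\y)$. Hence $V(x) = K(x, y^x_1) + p\,q(y^x_1) + (1-p) V(y^x_1)$, so $y^x_1 \in N(x)$ realizes the minimum in \eqref{eq:DP_uncertain}; I set $\mu(x) := y^x_1$. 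This is a legitimate control mapping, and note that the paths $\y^x$ chosen at different nodes need not be mutually compatible — only local optimality of each chosen first step is used.

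It remains to check $\J(x, \mu) = V(x)$. Let $\z = (z_0, z_1, \ldots)$ be the path generated by $\mu$ from $z_0 = x$, so $z_{i+1} = \mu(z_i)$ and, by construction, $V(z_i) = K(z_i, z_{i+1}) + p\,q(z_{i+1}) + (1-p) V(z_{i+1})$ for every $i \ge 0$. Telescoping gives, for each $k$,
\begin{equation*}
V(z_0) \; = \; \sum_{i=0}^{k-1} \bigl( K(z_i, z_{i+1}) + p\,q(z_{i+1}) \bigr)(1-p)^i \; + \; (1-p)^k V(z_k),
\end{equation*}
whereas \eqref{eq:path_recursive_general} applied to $\y = \z$ writes $J(\z)$ as the same finite sum plus $(1-p)^k J(z_k, z_{k+1}, \ldots)$. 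Subtracting,
\begin{equation*}
J(\z) - V(z_0) \; = \; (1-p)^k \bigl( J(z_k, z_{k+1}, \ldots) - V(z_k) \bigr).
\end{equation*}
Since $|J(\cdot)|$ is uniformly bounded on $Y$ (hence so is $|V(\cdot)|$) and $(1-p)^k \to 0$ as $k \to \infty$, the right-hand side tends to $0$, while the left-hand side does not depend on $k$; therefore $J(\z) = V(z_0) = V(x)$, and $\J(x,\mu) = J(\z)$ by definition.

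The step I expect to need the most care is this passage from local to global optimality — in particular, verifying that the telescoped expansion of $V(z_0)$ and the recursion \eqref{eq:path_recursive_general} for $J(\z)$ genuinely share the same finite part, so that their difference collapses to the lone tail term $(1-p)^k(J(z_k,\ldots) - V(z_k))$, which uniform boundedness then annihilates. A more hands-on alternative avoids choosing a path at every node: start from a single minimizing path, apply Lemma \ref{lm:min_equally_good_loop} to replace its eventual cycle by a simple loop (making the path simple), read off a stationary policy on the nodes it visits, and iterate over the rest — but that route forces one to reconcile the policies obtained at successive stages, which the argument above sidesteps.
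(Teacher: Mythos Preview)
Your proof is correct, and it takes a genuinely different route from the paper's.

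The paper proceeds \emph{constructively}: it starts from one optimal path $\yhat \in Y^*(x)$ and, using Lemmas~\ref{lm:min_equally_good_tail} and~\ref{lm:min_equally_good_loop}, repeatedly replaces tails by repeated loops until (after at most $M$ applications) the path is simple. A simple path is then read as a stationary policy on the nodes it visits. Your argument instead defines $\mu$ globally by picking, at every node, a first step realizing the minimum in~\eqref{eq:DP_uncertain}, and then uses the telescoping identity together with the geometric discount $(1-p)^k\to 0$ and the uniform bound on $|J|$ to force $J(\z)=V(x)$. This is the classical ``greedy policy is optimal'' argument for discounted problems; it bypasses the cycle-manipulation lemmas entirely and, as a bonus, yields a \emph{single} stationary policy that is optimal from every starting node simultaneously. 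What the paper's route buys is an explicit simple path in $Y^s(x)$, which is precisely the form later results (e.g.\ Theorem~\ref{thm:causality}) invoke; your approach recovers this too, since the path generated by a stationary $\mu$ is automatically simple, but the paper's construction makes the reduction to $Y^s$ the visible output rather than a corollary.
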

\begin{proof}
By Lemma \ref{lm:min_attained}, there exists some optimal path
$\yhat \in Y^*(x).$
We note that, given any path, it is easy to define a policy generating it.
Moreover, if the path is {\em simple}, then it can be produced
by a single control mapping (and the corresponding policy will be stationary).
%COMMENT REMOVED
%COMMENT REMOVED
%COMMENT REMOVED
%COMMENT REMOVED
%COMMENT REMOVED
%COMMENT REMOVED
%COMMENT REMOVED
%COMMENT REMOVED
So, we simply need to prove the existence of some
$\ybar \in Y^s(x)$ such that $J(\ybar) = J(\yhat).$\\
For each non-negative integer $k$ and each path $\y \in Y(x)$, let
$M(\y,k) = \{m \, \mid \, m > k, \, y_m = y_k\}$.
We will define a function $D_k: Y(x) \mapsto Y(x)$ as follows:\\
$\bullet \,$ if $M(\y,k)$ is not empty, then
$D_k(\y) = \tilde{\y}$ produced in Lemma \ref{lm:min_equally_good_loop}
using $k$ and $m = \min M(\y,k)$,\\
$\bullet \,$ and $D_k(\y) = \y$ otherwise.

Note, that if $\y \in Y^*(x)$, then $D_k(\y) \in Y^*(x)$ as well.
Defining $\tilde{\y}^0 = \yhat$ and $\tilde{\y}^k = D_{k-1} (\tilde{\y}^{k-1})$
we obtain a simple path in at most $M$ steps; i.e.,
$\ybar = \tilde{\y}^M \in Y^s(x)$.
\end{proof}

\section{Appendix B: a quadrant-by-quadrant update formula.}
\label{s:Appendix_B}
To simplify the discussion, we will focus on one node
$\x = \x_{i,j}$, renaming its neighbors as in Figure \ref{fig:semi-L} and
slightly abusing the notation as follows:
$$
\V = V_{i,j}, \qquad K = K_{i,j}, \qquad q = q_{i,j}, \qquad f = f_{i,j},
$$
$$
V_1 = V_{i+1,j}, \qquad V_2 = V_{i,j+1}, \qquad
V_3 = V_{i-1,j}, \qquad V_4 = V_{i,j-1}.
$$

First, suppose that $\V<q, \;
\max \left( D^{-y}_{ij}V, \, -D^{+y}_{ij}V, \, 0 \right) = 0,$ and
\begin{equation}
\max \left( D^{-x}_{ij}V, \, -D^{+x}_{ij}V, \, 0 \right) = -D^{+x}_{ij}V
\quad \Longrightarrow \quad \V \geq V_1,
\label{eq:use_Vx_plus}
\end{equation}
Then \eqref{eq:DP_cUHP} reduces to
$\V  = q +  \left[K - \frac{f}{h} (\V-V_1) \right] / \lambda,
$
with the solution
\begin{equation}
\V = (h K + \lambda h q + f V_1) / (\lambda h + f).
\label{eq:one_sided_update}
\end{equation}
We note that $\V<q$ implies $\V - V_1 = h (K + \lambda (q - \V)) / f > 0$,
which is consistent with \eqref{eq:use_Vx_plus}.

Now suppose that $\V<q$ and \eqref{eq:use_Vx_plus} hold, but
\begin{equation}
\max \left( D^{-y}_{ij}V, \, -D^{+y}_{ij}V, \, 0 \right) = -D^{+y}_{ij}V,
\quad \Longrightarrow \quad \V \geq V_2,
\label{eq:use_Vy_plus}
\end{equation}
Then \eqref{eq:DP_cUHP} reduces to a quadratic equation
\begin{equation}
f^2 \left[
\left( \frac{\V - V_1}{h} \right)^2
+
\left( \frac{\V - V_2}{h} \right)^2
\right]
=
\left[ K + \lambda q - \lambda \V \right]^2.
\label{eq:cUHP_quadrant}
\end{equation}

Suppose $V^{\#}$ is the smallest number satisfying both
\eqref{eq:cUHP_quadrant}
and $V^{\#} \geq \max(V_1, V_2)$ (for consistency with \eqref{eq:use_Vx_plus}
and \eqref{eq:use_Vy_plus}).
If such $V^{\#}$ exists,
we use it as an ``update'' from this first quadrant; i.e., $V^{12} = V^{\#}$.
Otherwise, we define the update similarly
to \eqref{eq:one_sided_update} as
$V^{12} = (h K + \lambda h q + f \min(V_1, V_2)) / (\lambda h + f).$
The updates from the other quadrants are similarly defined
using $(V_2,V_3)$, $(V_3, V_4)$, and $(V_4, V_1).$
If the values in $NV_{i,j}$ are fixed,
it is easy to check that
\begin{equation}
V_{i,j} = \min \left(q, V^{12}, V^{23}, V^{34}, V^{41} \right)
\label{eq:full_update}
\end{equation}
is the unique solution of \eqref{eq:DP_cUHP}.

\begin{remark}
\label{rem:good_update}
The modified version of the Fast Marching Method in section \ref{ss:FMM_modified}
calls for updating the not-yet-$Accepted$ value of $V_{i,j}$ whenever one of
its neighbors is $Accepted$.  In principle, the above procedure and
formula \eqref{eq:full_update} may be used
for that update, but a more efficient implementation can be built utilizing
the quadrant-by-quadrant approach and the fact that $V_{i,j}$ depends on
the smaller neighbors only.

First, each quadrant-update should
be considered only if at least one of the neighbors defining this quadrant
is already $Accepted$  (by the causality of \eqref{eq:DP_cUHP},
the  non-$Accepted$ values can be replaced by $+\infty$ without affecting $V_{i,j}$).
Second, if $\xBar$ is the last $Accepted$ gridpoint, then only
quadrants in which it participates will need to be considered
(since the updates from other quadrants have already been previously stored in
$V_{i,j}$ -- this is similar to the idea behind the update formula
\eqref{eq:temp_labels_local_Determ} for Dijkstra's method).
Third, the monotonicity of \eqref{eq:DP_cUHP} guarantees that at most one quadrant
%COMMENT REMOVED
is relevant.\\

For definiteness sake, suppose that the recently
accepted $\xBar = \x_{i+1,j} = \x_1$ in the  notation of Figure \ref{fig:semi-L}.
The efficient update procedure would set
$V(\x_{i,j}) \, := \; \min \left( V(\x_{i,j}), \V \right)$,
where $\V$ is\\

\noindent
1. computed by formula \eqref{eq:one_sided_update} if neither $\x_2$ nor $\x_4$ is $Accepted$;\\
2. equal to $V^{12}$ if $\x_2$ is $Accepted$ and $\x_4$ is not (or if both of them are $Accepted$, but $V_2 \leq V_4$);\\
3. equal to $V^{41}$ if $\x_4$ is $Accepted$ and $\x_2$ is not (or if both of them are $Accepted$, but $V_2 > V_4$).
\end{remark}

\subsection{A semi-Lagrangian discretization.}

The upwind finite difference discretization \eqref{eq:DP_cUHP}
and its quadrant-by-quadrant version in the previous section
do not appear natural from the point of view of optimal control.
A more natural semi-Lagrangian scheme is based on
a direct discretization of the optimality principle.
Here we show that the former is in fact equivalent to the latter.

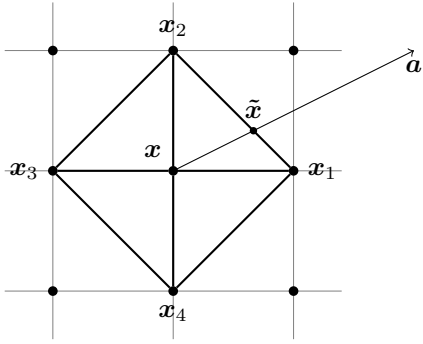
\begin{figure}
\center{
\begin{tikzpicture}[scale=1.6]
\tikzstyle{gridpoint}=[circle,draw=black!100,fill=black!100,thick,inner sep=0pt,minimum size=1mm]
\tikzstyle{interpoint}=[circle,draw=black!100,fill=black!100,thick,inner sep=0pt,minimum size=0.7mm]
\draw[step=1cm,gray,very thin] (-1.4,-1.4) grid (1.4,1.4);

  \node[gridpoint]         (x)		at (0,0)	[label=120:{$\x$}]		{};
  \node[gridpoint]         (x1)		at (1,0)	[label=right:{$\x_1$}]				{};
  \node[gridpoint]         (x2)		at (1,1)	[label=above right:{}]			{};
  \node[gridpoint]         (x3)		at (0,1)	[label=above:{$\x_2$}]				{};
  \node[gridpoint]         (x4)		at (-1,1)	[label=above left:{}]			{};
  \node[gridpoint]         (x5)		at (-1,0)	[label=left:{$\x_3$}]				{};
  \node[gridpoint]         (x6)		at (-1,-1)	[label=below left:{}]			{};
  \node[gridpoint]         (x7)		at (0,-1)	[label=below:{$\x_4$}]				{};
  \node[gridpoint]         (x8)		at (1,-1)	[label=below right:{}]			{};

  \path (x)		edge	[thick]		node {}		(x1)
				edge	[thick]		node {}		(x3)
				edge	[thick]		node {}		(x5)
				edge	[thick]		node {}		(x7);
  \path (x1)	edge	[thick]		node {}		(x3)
				edge	[thick]		node {}		(x7);
  \path (x5)	edge	[thick]		node {}		(x3)
				edge	[thick]		node {}		(x7);

  \draw[->]		(0,0)	--	(2,1)	node[below] {$\ba$};

  \node[interpoint]       (xtilde)	at (0.6666,0.3333)	[label=above:{$\xBtilde$}]	{};
\end{tikzpicture}
}
\caption{
{\footnotesize
A semi-Lagrangian scheme using a simple 5-point stencil on a Cartesian grid.
}
}
\label{fig:semi-L}
\end{figure}

Suppose $\x = (x_i, y_j)$ and $v(\x) < q(\x)$
(i.e., $\ba_0$ is not the optimal control value at $\x$).
Suppose $f$ and $K$ are locally constant, $v$ is smooth, and
the optimal direction of motion from $\x$ is $\ba \in S_1$
lying in the first quadrant; see Figure \ref{fig:semi-L}.
We assume that the motion continues in the direction $\ba$ until crossing
the segment $\x_1\x_2$ at the point $\xBtilde$ after $\tau  = |\x-\xBtilde| / f$ units of time.
The corresponding running cost is $\tau K$ and the probability of
termination signal received while moving from $\x$ to $\xBtilde$ is
$(1 - e^{-\lambda \tau})$.  In case of termination, we incur the terminal cost of
$q$; otherwise, the motion along the approximate optimal trajectory continues
(with the expected cost of $v(\xBtilde)$).
Thus,
\begin{eqnarray*}
v(\x)  &=& K \tau + (1 - e^{-\lambda \tau}) q  +  e^{-\lambda \tau} v(\xBtilde) + O(\tau^2)
= \frac{K \tau e^{\lambda \tau}}{e^{\lambda \tau}} +
\frac{(e^{\lambda \tau} - 1) q}{e^{\lambda \tau}}  +
\frac{v(\xBtilde)}{e^{\lambda \tau}} + O(\tau^2)\\
&=& \frac{K \tau }{1+\lambda \tau} +
\frac{q \lambda \tau}{1+\lambda \tau}  +
\frac{v(\xBtilde)}{1+\lambda \tau} + O(\tau^2).
\end{eqnarray*}
Let
$\Xi = \left\{ \xi = (\xi_1, \xi_2) \, \mid \,
\xi_1 + \xi_2 = 1
\text { and } \forall \xi_1, \xi_2 \geq 0
\right\}.$
If $\xi \in \Xi$ is such that
$\xBtilde = \xi_1 \x_1 + \xi_2 \x_2$, then
$$
\tau(\xi) =  (h / f) \sqrt{\xi_1^2 + \xi_2^2}
\qquad \text{and} \qquad
v(\xBtilde) \approx \xi_1 V_1 + \xi_2 V_2.$$
This suggests a semi-Lagrangian scheme for the grid function $V$:
\begin{equation}
V^{12} = \min\limits_{\xi \in \Xi} C(\xi) = \min\limits_{\xi \in \Xi} \left\{
\frac{(K + \lambda q) \tau(\xi) \; + \; (\xi_1 V_1 + \xi_2 V_2)}{1+\lambda \tau(\xi)}
\right\}.
\label{eq:semiL}
\end{equation}
Given similarly defined updates from all other quadrants,
we can again set
$$
\V = \min \left(q, V^{12}, V^{23}, V^{34}, V^{41} \right).
$$

Kuhn-Tucker optimality conditions can be used to relate this scheme
to \eqref{eq:DP_cUHP}.  A similar connection was previously
demonstrated for Eikonal PDEs on a uniform Cartesian grid by Tsitsiklis \cite{Tsitsiklis},
and then on triangulated meshes and for more general
Hamilton-Jacobi-Bellman PDEs by Sethian and Vladimirsky \cite[Appendix]{SethVlad3};
see also the detailed discussion
of connections to MSSP problems in \cite{VladMSSP}.
The following proof applies the same ideas to the variational inequality \eqref{eq:obstacle_form_iso},
with additional technical details due to the direct dependence of the Hamiltonian on $v$.
\begin{thm}
Let $\xi^*$ denote the minimizer in \eqref{eq:semiL}, and
suppose that $\V = V^{12} < q. \quad$
Then\\
1.  $\quad \xi_i^* > 0  \quad \imply \quad V^{12} > V_i \qquad \text{for } i=1,2.$\\
2.  $\quad \xi_1^*,\xi_2^*  > 0  \quad \imply \quad V^{12}$ defined by \eqref{eq:semiL}
is also a solution of \eqref{eq:cUHP_quadrant}.
\end{thm}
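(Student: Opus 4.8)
The plan is to read \eqref{eq:semiL} as the minimization of the smooth function $C$ over the compact segment $\Xi=\{\xi=(\xi_1,\xi_2)\mid \xi_1+\xi_2=1,\ \xi_1,\xi_2\ge 0\}$; smoothness holds because $\tau(\xi)=(h/f)\sqrt{\xi_1^2+\xi_2^2}$ never vanishes on $\Xi$, so a minimizer $\xi^*$ exists and first-order conditions apply. Write $\tau^*=\tau(\xi^*)$, $D(\xi)=1+\lambda\tau(\xi)$, $N(\xi)=(K+\lambda q)\tau(\xi)+\xi_1 V_1+\xi_2 V_2$, so that $C=N/D$, and set $\Lambda:=K+\lambda q-\lambda V^{12}$. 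The hypothesis $V^{12}<q$ together with $K\ge 0$ gives $\Lambda\ge\lambda(q-V^{12})>0$. The defining identity $C(\xi^*)=V^{12}$, i.e.\ $N(\xi^*)=V^{12}D(\xi^*)$, then rearranges into the \emph{key relation}
\[
V^{12}=\Lambda\,\tau^*+\xi_1^*V_1+\xi_2^*V_2 ,
\qquad\text{equivalently}\qquad
\xi_1^*V_1+\xi_2^*V_2-V^{12}=-\Lambda\tau^* .
\]

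First I would compute the gradient of $C$ at $\xi^*$. Since $N(\xi^*)=V^{12}D(\xi^*)$, the quotient rule together with $\partial_iN=(K+\lambda q)\partial_i\tau+V_i$ and $\partial_iD=\lambda\partial_i\tau$ gives, for $i=1,2$ (with $V_1,V_2$ the two neighbor values),
\[
\partial_iC(\xi^*)=\frac{\partial_iN(\xi^*)-V^{12}\partial_iD(\xi^*)}{D(\xi^*)}
=\frac{\Lambda\,\partial_i\tau(\xi^*)+V_i-V^{12}}{D(\xi^*)},
\qquad
\partial_i\tau(\xi)=\frac{h^2\xi_i}{f^2\tau(\xi)} ,
\]
so $\partial_i\tau(\xi^*)>0$ precisely when $\xi_i^*>0$. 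Now split into cases. If $\xi^*$ is an endpoint of $\Xi$, say $\xi^*=(1,0)$, then $\tau^*=h/f$ and the key relation alone yields $V^{12}-V_1=\Lambda\tau^*>0$; since $\xi_2^*=0$ there is nothing more to check, so part 1 holds at endpoints (the case $\xi^*=(0,1)$ is symmetric).

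For the interior case $\xi_1^*,\xi_2^*>0$, minimality along the segment $\{\xi_1+\xi_2=1\}$ forces $\partial_1C(\xi^*)=\partial_2C(\xi^*)=:c$, i.e.\ $\Lambda\partial_i\tau(\xi^*)+V_i-V^{12}=c\,D(\xi^*)$ for $i=1,2$. Multiplying the $i$-th identity by $\xi_i^*$ and summing, I would invoke Euler's identity $\sum_i\xi_i^*\partial_i\tau(\xi^*)=\tau^*$ (since $\tau$ is positively homogeneous of degree one) and the key relation $\sum_i\xi_i^*V_i-V^{12}=-\Lambda\tau^*$; the two contributions cancel, leaving $0=c\,D(\xi^*)$ and hence $c=0$. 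Therefore $V^{12}-V_i=\Lambda\,\partial_i\tau(\xi^*)=\Lambda h^2\xi_i^*/(f^2\tau^*)>0$ for $i=1,2$, which is part 1. Substituting these two identities into the left side of \eqref{eq:cUHP_quadrant} and using $(\xi_1^*)^2+(\xi_2^*)^2=(f\tau^*/h)^2$ collapses it to $\Lambda^2=[K+\lambda q-\lambda V^{12}]^2$, the right side, which is part 2.

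The only step I expect to be non-routine is showing that the multiplier $c$ of the active constraint vanishes; this is exactly where the homogeneity of $\tau$ and the relation $C(\xi^*)=V^{12}$ combine. Everything else is bookkeeping: the quotient rule evaluated at the minimizer, the explicit form of $\partial_i\tau$, and the single use of the hypothesis $V^{12}<q$ to secure $\Lambda>0$ — which is what makes the inequalities in part 1 strict and fixes the correct sign when passing between \eqref{eq:semiL} and \eqref{eq:cUHP_quadrant}.
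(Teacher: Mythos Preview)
Your approach is essentially the same as the paper's: both apply the first-order (Kuhn--Tucker) conditions to $C$ on $\Xi$, multiply by $\xi_i^*$ and sum, use the homogeneity identity $\sum_i \xi_i^*\partial_i\tau=\tau$, and arrive at $V^{12}-V_i=\Lambda\,\partial_i\tau(\xi^*)$, from which both parts follow.

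There is, however, a harmless algebra slip in your gradient formula. From
\[
\partial_iC(\xi^*)=\frac{\partial_iN-V^{12}\partial_iD}{D}
=\frac{(K+\lambda q)\partial_i\tau+V_i-\lambda V^{12}\partial_i\tau}{D}
=\frac{\Lambda\,\partial_i\tau+V_i}{D},
\]
so the extra $-V^{12}$ you wrote in the numerator should not be there. Consequently the Lagrange multiplier is not $c=0$ but $c=V^{12}/D(\xi^*)$ (exactly as the paper finds). The slip is self-cancelling: your displayed identity $\Lambda\partial_i\tau+V_i-V^{12}=cD$ with $c=0$ and the correct identity $\Lambda\partial_i\tau+V_i=cD$ with $c=V^{12}/D$ both reduce to $V^{12}-V_i=\Lambda\,\partial_i\tau(\xi^*)$, so parts 1 and 2 go through unchanged.
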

\begin{proof}
We note three useful properties of the function $\tau(\xi)$:
\begin{eqnarray}
\tauP{i}(\xi) &=&
\frac{h^2}{f^2} \frac{\xi_i}{\tau(\xi)};
\label{eq:tau_prop1}
\\
\tau(\xi) &=& \xi_1 \tauP{1}(\xi) + \xi_2 \tauP{2}(\xi);
\label{eq:tau_prop2}\\
\frac{h^2}{f^2} &=& \left(\tauP{1}(\xi) \right)^2 + \left(\tauP{2}(\xi) \right)^2.
\label{eq:tau_prop3}
\end{eqnarray}
To simplify the notation, we will suppress the arguments -- in what follows,
$\tau$ and its partial derivatives are always evaluated at $\xi^*$.

First, note that if $\xi^* = (1,0)$, then $\tau = h/f$ and \eqref{eq:semiL}
reduces to \eqref{eq:one_sided_update};
a similar formula (with $V_2$ replacing $V_1$) holds when $\xi^* = (0,1)$.
Now suppose $\xi_1^*, \xi_2^* > 0.$
By the Kuhn-Tucker optimality conditions applied to $C(\xi)$
defined in formula \eqref{eq:semiL}, there exists a Lagrange multiplier $\mu$
such that
\begin{equation}
\mu = \frac{\partial C }{\partial \xi_i} (\xi^*)
= \frac{(K + \lambda q) \tauP{i} + V_i}{1+ \lambda \tau}
- \frac{\lambda \tauP{i} [\tau (K + \lambda q) + \xi_1^* V_1 + \xi_2^* V_2] }
{(1+ \lambda \tau)^2}
= \frac{(K + \lambda q - \lambda V^{12}) \tauP{i} + V_i}{1+ \lambda \tau},
\qquad \text{for } i=1,2.
\label{eq:L_Mult1}
\end{equation}
Multiplying the above by $\xi_i^*$,
adding up (for $i=1,2$) and using \eqref{eq:tau_prop2}, we see that
\begin{equation}
\mu = \mu (\xi_1 + \xi_2) =
\frac{(K + \lambda q  - \lambda V^{12}) \tau +
\xi_1^* V_1 + \xi_2^* V_2}{1+ \lambda \tau}
= V^{12} - \frac{\lambda \tau V^{12}}{1+ \lambda \tau}
= \frac{V^{12}}{1+ \lambda \tau}.
\label{eq:L_Mult2}
\end{equation}
Combining \eqref{eq:L_Mult1} and \eqref{eq:L_Mult2}, we obtain
\begin{equation}
V^{12} - V_i \; = \; (K + \lambda q - \lambda V^{12}) \tauP{i} \; > \; 0,
\qquad \text{for } i=1,2
\label{eq:L_Mult3}
\end{equation}
where the inequality follows from $K \geq 0$, $V^{12} < q$,
and the fact that $(\xi_i^* > 0  \quad \imply \quad \tauP{i}(\xi^*) > 0)$
by formula \eqref{eq:tau_prop1}.  This shows the causality of
the semi-Lagrangian discretization.

To prove the second half of the theorem,
we take a square of both sides of \eqref{eq:L_Mult3}, sum over $i=1,2$ and
use \eqref{eq:tau_prop3} to obtain
$$
\left( V^{12} - V_1 \right)^2 + \left( V^{12} - V_2 \right)^2 =
\frac{h^2}{f^2} \left(K + \lambda q - \lambda V^{12} \right)^2,
$$
which is equivalent to \eqref{eq:cUHP_quadrant}.
\end{proof}
%COMMENT REMOVED

\end{document}